\newtheorem{theorem}{Theorem}[section]
\newtheorem{lemma}[theorem]{Lemma}
\newtheorem{proposition}[theorem]{Proposition}
\newtheorem{corollary}[theorem]{Corollary}
\newtheorem{remark}[theorem]{Remark}
\newtheorem{definition}[theorem]{Definition}
\numberwithin{equation}{section}
\newcommand{\HH}{H}
\newcommand{\definedas}{\mathrel{\raise.095ex\hbox{\rm :}\mkern-5.2mu=}}
\newcommand{\asdefined}{\mathrel{=\mkern-5.2mu\raise.095ex\hbox{\rm :}}}
\newcommand{\Ric}{\operatorname{Ric}}
\newcommand{\Rm}{\operatorname{Rm}}
\newcommand{\scal}{\operatorname{R}}
\newcommand{\R}{\mathbb{R}}
\newcommand{\N}{\mathbb{N}}
\newcommand{\diver}{\operatorname{div}}
\newcommand{\deter}{\operatorname{det}}
\newcommand{\crit}{\operatorname{Crit}f}
\newcommand{\supp}{\operatorname{supp}}
\begin{document}
\title{Uniqueness of static vacuum asymptotically flat black holes and equipotential photon surfaces\\ in $n+1$ dimensions \`a la Robinson}
	
\author{Carla Cederbaum$^{1,5}$, Albachiara Cogo$^{2,5}$, Benedito Leandro$^{3,6}$,\\ and Jo\~ao Paulo dos Santos$^{4,6}$}
\footnotetext[1]{\textsf{cederbaum@math.uni-tuebingen.de}}
\footnotetext[2]{\textsf{albachiara.cogo@math.uni-tuebingen.de}}
\footnotetext[3]{\textsf{bleandrone@mat.unb.br}}
\footnotetext[4]{\textsf{joaopsantos@unb.br}}
\footnotetext[5]{Mathematics Department, University of T\"ubingen, Germany}
\footnotetext[6]{Departamento de Matem\'atica, Universidade de Bras\'ilia, 70910-900, Bras\'ilia-DF, Brazil}

\date{}
	
\begin{abstract}
In this paper, we combine and generalize to higher dimensions the approaches to proving the uniqueness of connected $(3+1)$-dimensional static vacuum asymptotically flat black hole spacetimes by M\"uller zum Hagen--Robinson--Seifert and by Robinson. Applying these techniques, we prove and/or reprove geometric inequalities for connected $(n+1)$-dimensional static vacuum asymptotically flat spacetimes with either black hole or equipotential photon surface or specifically photon sphere inner boundary. In particular, assuming a natural upper bound on the total scalar curvature of the boundary, we recover and extend the well-known uniqueness results for such black hole and equipotential photon surface spacetimes. We also relate our results and proofs to existing results, in particular to those by Agostiniani--Mazzieri and by Nozawa--Shiromizu--Izumi--Yamada.
\end{abstract}

\maketitle{}

\vspace{-1.5ex}
	
	\noindent \emph{Mathematics Subject Classification (2020)}: 83C15, 53C17, 83C57, 53C24\\
	\noindent \emph{Keywords}: Black holes, static metrics, vacuum Einstein equation, equipotential photon surfaces, photon spheres, $D$-tensor, $T$-tensor.

\section{Introduction and results}\label{sec:intro}
	Black holes are among the most intriguing objects in nature and have captured the attention of researchers since Schwarz\-schild provided the first non-trivial solution of Einstein's equation of general relativity. Their properties and shape have since and continue to be thoroughly investigated. In the static case, it is well-established~\cite{israel,robinsonetal,robinson,bunting,miao,agostiniani,raulot} that the black hole solution found by Schwarz\-schild constitutes the only $3+1$-dimensional asymptotically flat static vacuum spacetime with an (a priori possibly disconnected) black hole horizon arising as its inner boundary. This fact is known as  ``static vacuum black hole uniqueness''; it also goes by the pictorial statement that ``static vacuum black holes have no hair''. We refer the interested reader to the reviews~\cite{Heusler,robinsonreview} for more information.
	
	In the higher dimensional case with spacetime dimension $n+1\geq3+1$, the analogous fact has also been asserted \cite{Hwang1,gibbons,ndimunique,ndimuniqueMFO,raulot,agostiniani,Nozawa}; however, all proofs make extra assumptions. The proofs by Hwang~\cite{Hwang1} and by Gibbons, Ida, and Shiromizu~\cite{gibbons} extend the method by Bunting and Masood-ul-Alam~\cite{bunting} allowing to deal with possibly disconnected horizons (see \cite{ndimunique,ndimuniqueMFO} for a more general version of this approach, and see~\cite{HollandsIshibashi} for a review of related results). These proofs rely on the rigidity case of the positive mass theorem and hence currently\footnote{but see~\cite{schoen,lohkamp}} make a spin assumption (using Witten's Dirac operator approach~\cite{Witten}) or impose an upper bound of $n+1\leq7+1$ on the spacetime dimension (using the minimal hypersurface approach by Schoen and Yau~\cite{SchoenYau2,SchoenYau}). Building on ideas by Walter Simon, Raulot \cite{raulot} exploits spinor techniques and thus explicitly makes a spin assumption. Instead, the proof by Agostiniani and Mazzieri~\cite{agostiniani} via potential theory, monotone functions, and a (conformal) splitting theorem assumes connectedness of the horizon as well as an upper bound on the total scalar curvature of the (time-slice) of the horizon, see also \Cref{subsec:AM}. Nozawa, Shiromizu, Izumi, and Yamada~\cite{Nozawa} derive the same statement as \cite{agostiniani} by a combination and generalization to higher dimensions of the divergence theorem based methods by M\"uller zum Hagen, Robinson, and Seifert~\cite{robinsonetal} and by Robinson~\cite{robinson}, see \Cref{subsec:Nozawa} for more details. Related results were recently presented in \cite{harvie2024quasisphericalmetricsstaticminkowski,medvedev2024staticmanifoldsboundarygeometry}.\\
	
\emph{The first main goal of this paper} is to give a rigorous new proof of static vacuum black hole uniqueness under the same geometric assumptions as Agostiniani--Mazzieri \cite[Theorem 2.8]{agostiniani}, but allowing for weaker decay assumptions, see \Cref{teoclassifica1} and \Cref{rem:decay}. Moreover, we reproduce all geometric inequalities for connected horizons proved in~\cite[Theorem 2.8]{agostiniani}, extend them to a wider class of parameters, and  identify a concrete relationship between our method and the approach taken in \cite{agostiniani}, see \Cref{sec:discussion}. We do so by combining, extending, and generalizing to higher dimensions the approaches by M\"uller zum Hagen, Robinson, and Seifert~\cite{robinsonetal} and by Robinson~\cite{robinson}. Our proof is rather similar to the derivation of the same statement by Nozawa, Shiromizu, Izumi, and Yamada~\cite[Section 5]{Nozawa} but allowing for weaker decay assumptions as well as filling in subtle analytic details, closing a gap in the uniqueness argument, and highlighting a connection to the analysis of Ricci solitons, see also \Cref{subsec:Nozawa}.

\begin{theorem}[Black Hole Uniqueness]\label{teoclassifica1}
Let $(M^n,g,f)$ be an asymptotically flat static vacuum system of mass $m\in\R$  and dimension $n\geq3$ with connected static horizon inner boundary $\partial M$. Let 
		\begin{align}\label{def:spartialM}
			s_{\partial M}\definedas\left(\frac{\vert\partial M\vert}{\vert\mathbb{S}^{n-1}\vert}\right)^{\frac{1}{n-1}}
		\end{align}
		denote the \emph{area radius of $\partial M$}, where $|\partial M|$ and $| \mathbb{S}^{n-1}|$ denote the surface area of $(\partial M,g_{\partial M})$ with respect to the induced metric $g_{\partial M}$ on $\partial M$ and of $(\mathbb{S}^{n-1},g_{\mathbb{S}^{n-1}})$, respectively. Then
		\begin{align}\label{maininequality}
			\frac{\left(s_{\partial M}\right)^{n-2}}{2} \sqrt{\frac{ \int_{\partial M} \scal_{\partial M} \,dS}{(n-1)(n-2) \, |\mathbb{S}^{n-1}| \left(s_{\partial M}\right)^{n-3} }} \geq m \geq \frac{\left(s_{\partial M}\right)^{n-2}}{2} ,
		\end{align}
		where $\scal_{\partial M}$ and $dS$ denote the scalar curvature and the hypersurface area element of $\partial M$ with respect to $g_{\partial M}$, respectively. In particular, $\partial M$ satisfies 
		\begin{align}\label{ineqA}
			\int_{\partial M} \scal_{\partial M} dS\geq  (n-1)(n-2) |\mathbb{S}^{n-1}| \left(s_{\partial M}\right)^{n-3}
		\end{align}
and $(M,g,f)$ has positive mass $m>0$. 
		
Moreover, equality holds on either side of \eqref{maininequality} and/or in \eqref{ineqA} if and only if $(M,g)$ is isometric to the Schwarz\-schild manifold $(M^{n}_{m},g_{m})$ of mass~$m$ and $f$ corresponds to the Schwarzschild lapse function $f_{m}$ under this isometry.
\end{theorem}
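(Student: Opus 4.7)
My plan is to generalize the divergence-theorem approach of Robinson and M\"uller zum Hagen--Seifert to dimension $n$, in the spirit of \cite{Nozawa}, refining the argument to allow weaker decay and to close the advertised analytic gaps. The starting point is the static vacuum equations $\Delta f = 0$ and $f\Ric = \nabla^2 f$, which together yield $\scal \equiv 0$ and $\Ric = f^{-1}\nabla^2 f$ on $\{f>0\}$. Since $f$ is harmonic with $f|_{\partial M} = 0$ and $f \to 1$ at infinity, the maximum principle gives $0 < f < 1$ in the interior; I would use $f$ as a foliation parameter and study the level sets $\Sigma_t \definedas \{f = t\}$ and their induced geometry.

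\textbf{Divergence identity.} Following the paper's announced use of $D$- and $T$-tensors, I would build a symmetric trace-free tensor $T$ out of $f$, $|\nabla f|$, $\nabla^2 f$, and the second fundamental form of $\Sigma_t$, designed so that $T \equiv 0$ identically on Schwarzschild and so that a Bochner-type computation (using the static equations and the Ricci identity) produces a divergence identity of the form
\begin{equation*}
\diver(X) \;=\; W\,|T|^2_g \;+\; \Psi,
\end{equation*}
with weight $W > 0$ depending only on $f$ and $|\nabla f|$, a flux field $X$ constructed from $\nabla f$ and $\nabla^2 f$, and $\Psi$ an exact boundary-type term. Geometrically, $T \equiv 0$ is equivalent to $|\nabla f|$ being a function of $f$ alone, equivalently to the $\Sigma_t$ being umbilical round spheres of the Schwarzschild profile. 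I would then integrate over the region between $\partial M$ and a large level set $\Sigma_{1-\delta}$ and pass to the limit $\delta \to 0$.

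\textbf{Boundary analysis and inequalities.} On $\partial M$, using $f = 0$, $|\nabla f| \equiv \kappa > 0$ constant (surface gravity), and the fact that $\partial M$ is totally geodesic, the Gauss equation combined with $\scal = 0$ yields $\scal_{\partial M} = -2\Ric(\nu,\nu)$; the horizon flux then reduces to an explicit combination of $\int_{\partial M}\scal_{\partial M}\,dS$, $|\partial M|$, and $\kappa$. At infinity, the expansion $f = 1 - m\,r^{-(n-2)} + o(r^{-(n-2)})$ isolates the mass $m$ under substantially weaker pointwise decay than in \cite{agostiniani}, since only integrated flux quantities enter. Inserting these into the divergence identity together with $\int_M W\,|T|^2 \geq 0$ yields one of the bounds in \eqref{maininequality}; a companion divergence identity obtained by modifying the weight or the trace-part of $T$ produces the other bound. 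The consequences \eqref{ineqA} and $m > 0$ then follow immediately by algebraic manipulation.

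\textbf{Rigidity and main obstacle.} Equality on either side of \eqref{maininequality} (or in \eqref{ineqA}) forces $\int_M W\,|T|^2 = 0$, hence $T \equiv 0$, which by the geometric interpretation above identifies $(M,g)$ with a warped product $(I \times \mathbb{S}^{n-1},\, dr^2/V(r) + r^2 g_{\mathbb{S}^{n-1}})$ for some profile $V$; together with $\scal = 0$ and $\Delta f = 0$, this forces $V(r) = 1 - 2m\,r^{-(n-2)}$, i.e., Schwarzschild. The \emph{main obstacle} I anticipate is the global rigidity step: ruling out interior critical points of $f$ so that the foliation $\{\Sigma_t\}$ extends across all of $M \setminus \partial M$. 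This is precisely the analytic gap the authors advertise closing relative to \cite{Nozawa}, and I would approach it by combining real-analyticity of static vacuum solutions (away from $\partial M$) with a unique-continuation argument anchored at the horizon, where the geometry is completely determined by $\kappa$ and the already-rigid induced metric on $\partial M$.
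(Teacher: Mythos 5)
Your overall strategy coincides with the paper's: a generalized Robinson divergence identity built from a $T$-tensor that vanishes on Schwarzschild, two complementary parameter choices in the identity to produce the two sides of \eqref{maininequality} (the paper's extremal choices $(c,d)=(1,-f_0^2)$ and $(-1,1)$ in \Cref{maintheoremp2}), evaluation of the horizon flux via the Gau{\ss} equation and total geodesy, and extraction of $m$ from the flux at infinity under weak decay. Up to the inequalities themselves, this is the right plan (and for the black hole theorem one may work at $p=3$, where the flux vector field is smooth across $\operatorname{Crit}f$, so the delicate integrability analysis near critical points that the paper carries out for $p<3$ can be sidestepped).

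The genuine gap is in your rigidity step. You assert that $T\equiv 0$ is ``equivalent to the $\Sigma_t$ being umbilical round spheres of the Schwarzschild profile,'' and you deduce directly that $(M,g)$ is a warped product over $\mathbb{S}^{n-1}$. This is false as a local statement for $n\geq4$: the condition $T=0$ only forces the level sets to be umbilic with \emph{Einstein} induced metrics, and the local classification (the paper's \Cref{thm:locally}) produces four families of solutions, including products over Ricci-flat manifolds, warped products over negative-Einstein manifolds, and quasi-Schwarzschild metrics over arbitrary positive-Einstein manifolds $(\Sigma,\sigma)$ that need not be round spheres. Pinning down the Schwarzschild geometry requires the asymptotic flatness assumption twice: first to exclude Types 1--3 and fix the parameter $a=1$ in Type 4 (via $\Vert\nabla f\Vert\to0$ and $f\to1$), and second to show $\sigma=g_{\mathbb{S}^{n-1}}$ by comparing the $r$-independent cross-sectional metric with the decaying metric coefficients at infinity. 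None of this is in your sketch. Moreover, your proposed fix for interior critical points --- unique continuation ``anchored at the horizon, where the geometry is completely determined by $\kappa$ and the already-rigid induced metric on $\partial M$'' --- is circular: the roundness of $\partial M$ is part of the conclusion, not an input. The paper instead rules out $\operatorname{Crit}f\neq\emptyset$ in the $T=0$ case by the explicit first integral $\Vert\nabla f\Vert^2=\rho\cdot h(f)$ with $h>0$ on each component of $M\setminus\operatorname{Crit}f$, so that continuity at a critical point would force $\rho=0$ and hence $f\equiv\mathrm{const}$ (plus a geodesic-length argument in the remaining Type 4 subcase). You would need to supply arguments of this kind to complete the equality case, and hence the uniqueness statement, of \Cref{teoclassifica1}.
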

	
\begin{remark}[Black hole uniqueness follows from \Cref{teoclassifica1}]
The last statement gives the desired black hole uniqueness result subject to the scalar curvature bound condition 
\begin{align}\label{CondR}
\int_{\partial M} \scal_{\partial M} dS\leq  (n-1)(n-2) |\mathbb{S}^{n-1}| \left(s_{\partial M}\right)^{n-3}
\end{align}
 see also \Cref{rem:GB}. \Cref{teoclassifica1} implies several other interesting geometric inequalities such as a static version of the Riemannian Penrose inequality, see \cite{agostiniani,Mizuno,Nozawa} for more information.
\end{remark}
	
	Another recent direction of extending static vacuum black hole uniqueness results is to investigate uniqueness of spacetimes containing ``photon spheres'' (as introduced in~\cite{CVE}) or, more generally, ``photon surfaces'' (as introduced in~\cite{CVE,Perlick}). Here, \emph{photon surfaces} are timelike hypersurfaces of a spacetime which ``capture'' null geodesics; in static spacetimes, a photon surface is called \emph{equipotential} if the lapse function along it ``only depends on time'', and called a \emph{photon sphere} if the lapse function is (fully) constant along it (as introduced in~\cite{Ced}), see \Cref{sec:photo} and the references given there for definitions and more information. Photon surfaces  are relevant in gravitational lensing and in geometric optics, i.e., for trapping phenomena, and related to dynamical stability questions for black holes. 
	
	Photon spheres were first discovered in the $3+1$-dimensional Schwarz\-schild spacetimes of positive mass and persist in their higher dimensional analogs. (Equipotential) photon surfaces also naturally occur in Schwarzschild spacetimes of all dimensions and for all positive and negative masses, see~\cite{CedGalSurface,CJV}. (Asymptotically flat) static vacuum equipotential photon surface uniqueness is fully established in $3+1$ spacetime dimensions~\cite{Ced,carlagregpmt,CedGalSurface,CCF,raulot}. In particular, \cite{carlagregpmt,CedGalSurface,raulot} allow for combinations of black hole horizons and equipotential photon surfaces, assuming that all equipotential photon surface components are ``outward directed'', meaning that they have ``positive quasi-local mass'', see \Cref{rem:Smarrmass}. In contrast, Cederbaum, Cogo, and Fehrenbach~\cite{CCF} restrict to a connected, not necessarily outward directed equipotential photon surface, establishing uniqueness for the first time also in the negative and zero (total) mass cases. They generalize, exploit, and compare different techniques of proof, namely those from \cite{israel,Ced,agostiniani} and in particular Robinson's approach~\cite{robinson}.
	
	In higher dimensions $n+1\geq3+1$, the same result is established by Cederbaum and Galloway~\cite{CedGalSurface}, building on work by Cederbaum~\cite{ndimunique,ndimuniqueMFO} which uses the positive mass theorem; hence the ensuing restrictions discussed above apply. Raulot's spinorial approach~\cite{raulot} also covers higher dimensions, subject to a spin condition.\\[-1ex]
	
 The \emph{second main goal of this paper} is to demonstrate that the generalized divergence theorem based approach we derive can also be used to prove the expected uniqueness claim for connected equipotential photon surfaces, assuming the same upper bound on the total scalar curvature of the boundary as in the black hole case, see \Cref{teoclassifica2}. This generalizes the $3+1$-dimensional extension of Robinson's approach to connected equipotential photon surfaces by \cite{CCF}. Moreover, we prove similar geometric inequalities for connected equipotential photon surfaces as for black holes. Last but not least, we include the negative (total) mass case which has so far only been addressed in \cite{CCF} in $3+1$ dimensions. 
 
We do not address the zero mass case here. For $n=3$, the zero mass case and its connection to the Willmore inequality is established in \cite{CCF}. In higher dimensions, this requires extra considerations and is still work in progress.

\begin{theorem}[Equipotential Photon Surface Uniqueness]\label{teoclassifica2}
		Let $(M^n,g,f)$ be an asymptotically flat static vacuum system of mass $m\in\R$ and dimension $n\geq3$ with connected boundary $\partial M$ arising as a time-slice of an equipotential photon surface. Let $f_{0}>0$ denote the constant value of $f$ on $\partial M$ and assume that $f_{0}\neq1$. If $f_{0}\in(0,1)$ then
		\begin{align}\label{maininequality2}
				\frac{(1-f_0^2)\left(s_{\partial M}\right)^{n-2}}{2}\, \sqrt{ \frac{\left(s_{\partial M}\right)^{2}\left(\scal_{\partial M} - \frac{n-2}{n-1} \HH^2\right)}{(n-1)(n-2)(1-f_{0}^{2})}}\geq m \geq \frac{(1-f_0^2)\left(s_{\partial M}\right)^{n-2}}{2}.
		\end{align}
		
		Here, $\scal_{\partial M}$, $\HH$, and $s_{\partial M}$ denote the scalar curvature, the mean curvature, and the surface area radius \eqref{def:spartialM} of $\partial M$ with respect to the induced metric $g_{\partial M}$ on $\partial M$, respectively. In particular, $\partial M$ satisfies 
		\begin{align}\label{implicationphoto}
			\scal_{\partial M}-\tfrac{n-2}{n-1}\HH^{2} \geq \frac{(n-1)(n-2)(1-f_{0}^{2})}{\left(s_{\partial M}\right)^{2}}
		\end{align}
		and $(M,g,f)$ has positive mass $m>0$. If $f_{0}\in(1,\infty)$ then
		\begin{align}\label{maininequality2b}
			\begin{split}
				\frac{(1-f_0^2)\left(s_{\partial M}\right)^{n-2}}{2}\, \sqrt{ \frac{\left(s_{\partial M}\right)^{2}\left(\scal_{\partial M} - \frac{n-2}{n-1} \HH^2\right)}{(n-1)(n-2)(1-f_{0}^{2})}}\leq m \leq \frac{(1-f_0^2)\left(s_{\partial M}\right)^{n-2}}{2}.
			\end{split}
		\end{align}
		In particular, $\partial M$ satisfies 
		\begin{align}\label{implicationphotob}
			\scal_{\partial M}-\tfrac{n-2}{n-1}\HH^{2} \leq  \frac{(n-1)(n-2)(1-f_{0}^{2})}{\left(s_{\partial M}\right)^{2}}
		\end{align}
		and $(M,g,f)$ has negative mass $m<0$. Moreover, for any $f_{0}\in(0,1)\cup(1,\infty)$, if
		\begin{align}\label{AgoMazzCond}
			\scal_{\partial M} \leq  \frac{(n-1)(n-2)}{\left(s_{\partial M}\right)^{2}}
		\end{align}
		then $(M,g)$ is isometric to the piece $[s_{\partial M},\infty)\times\mathbb{S}^{n-1}$ of the Schwarz\-schild manifold $(M^{n}_{m},g_{m})$ of mass~$m$ and $f$ corresponds to the restriction of the Schwarzschild lapse function $f_{m}$ to $[s_{\partial M},\infty)$ under this isometry.
	\end{theorem}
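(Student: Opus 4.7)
The plan is to adapt the divergence-theoretic strategy used for \Cref{teoclassifica1} to the present setting, the only substantive change being the treatment of the boundary term at $\partial M$, which is now a time-slice of an equipotential photon surface rather than a horizon. As preliminaries, I would collect the geometric data on $\partial M$ implied by the equipotential photon surface assumption: the lapse satisfies $f\equiv f_0$; $\partial M$ is totally umbilical with constant mean curvature $H$; $|\nabla f|$ is constant on $\partial M$; and the standard photon surface relation $\partial_\nu f = \tfrac{f_0 H}{n-1}$ holds. Together with the static vacuum equations $f\Ric = \nabla^2 f$ and $\Delta f = 0$, these reduce all curvature quantities appearing in the boundary integrand to explicit functions of $f_0$, $H$, and $\scal_{\partial M}$, with the Gauss equation providing the bridge between $\scal_{\partial M}$, $H^2$, and the ambient Ricci data.

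Next, I would apply the same divergence identity as in the proof of \Cref{teoclassifica1}, schematically
\begin{equation*}
\diver(X) \;=\; \omega\,|\mathcal{T}|^2,
\end{equation*}
where $X$ is a vector field constructed from $f$, $|\nabla f|$, and the geometry, $\omega\geq 0$ is the natural positive weight, and $\mathcal{T}$ is the $D$- or $T$-tensor, which vanishes identically on Schwarzschild. Integrating over $M$ and applying the divergence theorem yields
\begin{equation*}
\lim_{R\to\infty}\into_{\{|x|=R\}}\langle X,\nu\rangle\,dS \;-\; \into_{\partial M} \langle X,\nu\rangle\,dS \;=\; \into_M \omega\,|\mathcal{T}|^2\,dV \;\geq\; 0.
\end{equation*}
Under the asymptotic flatness hypotheses (and the decay improvements referenced in \Cref{rem:decay}), the term at infinity reduces, as in the black hole case, to a fixed multiple of the ADM mass $m$. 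Using the photon surface data collected above, together with $|\partial M|=|\mathbb{S}^{n-1}|(s_{\partial M})^{n-1}$, the integral over $\partial M$ evaluates to a linear combination of $(1-f_0^2)(s_{\partial M})^{n-2}$ (its Schwarzschild value) and the traceless Gauss quantity $\scal_{\partial M}-\tfrac{n-2}{n-1}H^2$.

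Combining these ingredients, non-negativity of the bulk term produces two-sided bounds on $m$ in terms of the boundary geometry. The sign of $1-f_0^2$ flips the direction of the resulting inequality, yielding \eqref{maininequality2} when $f_0\in(0,1)$ and the reverse \eqref{maininequality2b} when $f_0\in(1,\infty)$; the companion pointwise/average bounds \eqref{implicationphoto} and \eqref{implicationphotob} then follow by juxtaposing the two estimates for $m$. The sign conclusions $m>0$ and $m<0$ follow from the strict positivity of $(1-f_0^2)(s_{\partial M})^{n-2}/2$ in each respective regime.

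The main obstacle is the rigidity statement under the strengthened hypothesis \eqref{AgoMazzCond}: one must check that this upper bound forces equality in the divergence identity, hence $\mathcal{T}\equiv 0$ on $M$. I would then argue that vanishing of $\mathcal{T}$ on the connected asymptotically flat system $(M,g,f)$ forces the regular level sets of $f$ to be totally umbilical round spheres and reduces the static vacuum equations to a first-order ODE in $f$ whose unique solution is the Schwarzschild profile. Matching the boundary data at $\partial M$ pins down the mass to $m=\tfrac{(1-f_0^2)(s_{\partial M})^{n-2}}{2}$ and identifies $(M,g,f)$ with the piece $[s_{\partial M},\infty)\times \mathbb{S}^{n-1}$ of $(M^n_m,g_m,f_m)$. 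The delicate point to be handled carefully is ruling out interior critical points of $f$ (so that the level-set ODE analysis runs globally) and ensuring that the conclusions are valid for both signs of $1-f_0^2$, in particular covering the negative mass case, which has no direct analogue in the black hole proof.
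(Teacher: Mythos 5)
Your overall strategy (divergence identity, divergence theorem, evaluation of the boundary term using the photon surface data, rigidity via vanishing of the $T$-tensor) is the same as the paper's, which derives the theorem from \Cref{maintheoremp3} by specializing to a totally umbilic boundary with constant $\scal_{\partial M}$ and $\HH$. However, there are two concrete problems. First, the ``standard photon surface relation'' you invoke, $\partial_\nu f = \tfrac{f_0\HH}{n-1}$, is \emph{not} valid for general equipotential photon surfaces; it holds only for photon spheres (check it on a sphere $\{r=\mathrm{const}\}$ in Schwarzschild: $\nu(f)=\tfrac{m}{r^{2}}$ equals $\tfrac{f_0\HH}{n-1}=\tfrac{f^2}{r}$ only at $r=3m$ when $n=3$). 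The correct constraint is \eqref{eq:photo}, $\scal_{\partial M}=\tfrac{2\kappa\HH}{f_0}+\tfrac{n-2}{n-1}\HH^2$, which ties $\kappa$ to \emph{three} boundary quantities. If you feed your relation into the boundary integrand you either get wrong coefficients or silently restrict the theorem to photon spheres, which defeats its purpose.

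Second, and more importantly, you have not supplied the mechanism by which \eqref{AgoMazzCond} forces equality, which is the actual crux of this theorem beyond the general machinery. Note that \eqref{AgoMazzCond} is an upper bound on $\scal_{\partial M}$, whereas the divergence-theorem output \eqref{implicationphoto} is a lower bound on $\scal_{\partial M}-\tfrac{n-2}{n-1}\HH^2$; these two do not meet directly (together they only give $\HH\leq\tfrac{(n-1)f_0}{s_{\partial M}}$). The paper closes the loop by rewriting both inequalities through the constraint \eqref{eq:photo} to obtain $1-f_0^2\leq\tfrac{2(n-1)\kappa f_0}{(n-2)\HH}$ (using $\HH>0$ from \Cref{prop:signH}), and then deriving the \emph{reverse} inequality from the squared left-hand side of \eqref{maininequality2} combined with the Smarr formula \eqref{eq:Smarr}, which converts the mass bound into a statement about $\kappa$ and $\HH$. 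Only this pincer forces equality throughout, after which the rigidity follows from the equality case of \Cref{maintheoremp3} (i.e., $T\equiv0$ and \Cref{thm:rigidity}). Your proposal names this step as ``the main obstacle'' but does not overcome it, and with the incorrect photon-sphere relation in place the needed chain of inequalities would not come out correctly for a general equipotential photon surface.
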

	The last statement gives the desired equipotential photon surface uniqueness result subject to a scalar curvature bound condition, see also \Cref{rem:GB}. \Cref{teoclassifica2} implies several other interesting geometric inequalities, see \cite{agostiniani} for more information.
	
	\begin{remark}[About Conditions~\eqref{CondR} and \eqref{AgoMazzCond}]\label{rem:GB}
		Note that \eqref{CondR} and \eqref{AgoMazzCond} are equivalent in case $\scal_{\partial M}=\text{const}$ (as it is the case for time-slices of equipotential photon surfaces, see \Cref{prop:photo}). In dimension $n = 3$, conditions~\eqref{CondR} and \eqref{AgoMazzCond} are of course automatically satisfied by the Gau\ss--Bonnet theorem. Hence \Cref{teoclassifica1} gives static vacuum black hole uniqueness in $3$ dimensions without extra assumptions (other than connectedness of the static horizon) and \Cref{teoclassifica2} gives static vacuum equipotential photon surface uniqueness in $3$ dimensions without extra assumptions (other than connectedness of the photon surface), including the negative (total) mass case.
	\end{remark}
	
	To prove \Cref{teoclassifica1,teoclassifica2}, we proceed as follows: First, in \Cref{sec:proofs}, we derive the following higher dimensional version of Robinson's identity \cite[Equation (2.3)]{robinson}, using the so-called \emph{$T$-tensor} instead of the Cotton tensor $C$ used by Robinson~\cite{robinson}. We also introduce an additional parameter $p\in\R$ as a power into the identity, with $p=3$ corresponding to Robinson's identity, and $p=\frac{3}{2}$ corresponding to the approach taken by M\"uller zum Hagen, Robinson, and Seifert~\cite{robinsonetal}. Similar parameters called $p$ and $c$, respectively, were introduced in \cite{agostiniani} and in \cite{Nozawa}; we refer the reader to \Cref{sec:discussion} for a discussion of the relation between the parameter $p$ and its range and the parameters $p$ and $c$ from \cite{agostiniani,Nozawa}.
	
	\begin{theorem}[Generalized Robinson identity]\label{maintheoremp}
		Let $(M^n,g,f)$, $n\geq3$, be a static vacuum system with $0<f<1$ or $f>1$ in $M$. Then, for all $c,d,p\in\R$, the \emph{generalized Robinson identity}
		\begin{align}
			\begin{split}\label{mainformulap}
				&\|\nabla f \|^2\,\diver\left(\frac{F(f)}{f}\Vert\nabla f\Vert^{p-3}\,\nabla\|\nabla f\|^{2}+G(f)\|\nabla f\|^{p-1}\,\nabla f\right)  \\
				&=\Vert\nabla f\Vert^{p-3}\,F(f)\left[\frac{(n-2)^{2}f}{(n-1)^{2}}\|T\|^{2}+\frac{p-p_{n}}{2f}\left\|\nabla\|\nabla f\|^{2}+\frac{4(n-1)}{(n-2)}\frac{f\|\nabla f\|^{2}\,\nabla f}{1-f^{2}}\right\|^{2}\right]
			\end{split}
		\end{align}
		holds on $M\setminus\crit$, with $\crit\definedas\{q\in M\,\vert\,\Vert\nabla f\Vert_{q}=0\}$ denoting the set of critical points of~$f$ and the constant $p_{n}$ is given by 
\begin{equation}\label{eq:thresholdp}
 p_{n}\definedas 2-\frac{1}{n-1}.
\end{equation}
 Here, $F,G\colon[0,1)\cup(1,\infty)\to\R$ are given by
		\begin{align}\label{eq:F}
			F(t)&\definedas\frac{ct^{2}+d}{\vert 1-t^{2}\vert^{\frac{(n-1)(p-1)}{n-2}-1}},\\\label{eq:G}
			G(t)&\definedas\frac{4\left(\frac{(n-1)(p-1)}{n-2}-1\right)F(t)}{(p-1)(1-t^{2})}-\frac{4c}{(p-1)\vert 1-t^{2}\vert^{\frac{(n-1)(p-1)}{n-2}-1}}
		\end{align}
for $t\in[0,1)\cup(1,\infty)$, $\Vert\cdot\Vert$, $\nabla$, and $\diver$ denote the tensor norm, covariant derivative, and covariant divergence with respect to $g$. The tensor $T$ is given by
		\begin{align}
			\begin{split}\label{tensorT}
				T (X, Y, Z) \definedas& \frac{n-1}{n-2} \left(\Ric(X, Z) \nabla_{\!Y}f - \Ric(Y, Z) \nabla_{\!X}f\right)\\
				& - \frac{1}{n-2} \left( \Ric(X, \nabla f) g(Y, Z) - \Ric(Y, \nabla f) g(X, Z)\right),
			\end{split}
		\end{align}
		for $X,Y,Z\in\Gamma(TM)$, where $\Ric$ denotes the Ricci curvature tensor of $(M, g)$.

Moreover, if $p\geq3$, the divergence on the left hand side continuously extends to $\crit$ and \eqref{mainformulap} holds on $M$. Furthermore,  if $p\geq p_{n}$, it follows that
\begin{align}\label{mainformulap0}
\diver\left(\frac{F(f)}{f}\Vert\nabla f\Vert^{p-3}\,\nabla\|\nabla f\|^{2}+G(f)\|\nabla f\|^{p-1}\,\nabla f\right) \geq0
\end{align}
on $M\setminus\crit$ provided that $F(f)\geq0$.
	\end{theorem}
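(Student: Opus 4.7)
The plan is to prove the identity by direct computation on $M\setminus\crit$ using the static vacuum equations $\nabla^{2}f=f\Ric$, $\Delta f=0$, and $\scal=0$, and only afterwards address the extension to the critical set and the sign discussion. The three structural inputs I would rely on are: (i) the Bochner formula, which under $\Delta f=0$ reduces to
\begin{equation*}
\tfrac{1}{2}\Delta\|\nabla f\|^{2}=\|\nabla^{2}f\|^{2}+\Ric(\nabla f,\nabla f)=f^{2}\|\Ric\|^{2}+\Ric(\nabla f,\nabla f);
\end{equation*}
(ii) the elementary relation $\nabla\|\nabla f\|^{2}=2\,\nabla^{2}f(\nabla f,\cdot)=2f\,\Ric(\nabla f,\cdot)$, which identifies $\|\nabla\|\nabla f\|^{2}\|^{2}=4f^{2}|\Ric(\nabla f,\cdot)|^{2}$; and (iii) an explicit expansion of $\|T\|^{2}$ using \eqref{tensorT}, which, after grouping symmetric and trace terms, I expect to yield an expression of the form
\begin{equation*}
\|T\|^{2}=2\,\tfrac{(n-1)^{2}}{(n-2)^{2}}\bigl(\|\Ric\|^{2}\|\nabla f\|^{2}-|\Ric(\nabla f,\cdot)|^{2}\bigr)+\tfrac{2(n-1)}{(n-2)^{2}}\,|\Ric(\nabla f,\cdot)|^{2}+\ldots
\end{equation*}
(a short algebraic exercise; the $\scal=0$ condition kills several trace contributions).

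Next, I would compute $\diver$ applied to each of the two vector fields separately, carefully tracking the three sources of product-rule terms: differentiating the $F(f)/f$ (or $G(f)$) prefactor in the direction of $\nabla f$, differentiating the power $\|\nabla f\|^{p-3}$ (which produces a factor $(p-3)\|\nabla f\|^{p-5}\,\langle\nabla\|\nabla f\|^{2},\nabla(\cdot)\rangle$), and differentiating the gradient itself (giving $\Delta\|\nabla f\|^{2}$ or $\Delta f=0$). Collecting everything, multiplying by $\|\nabla f\|^{2}$, and substituting Bochner plus (ii) rewrites the left-hand side entirely in terms of $\|\Ric\|^{2}$, $|\Ric(\nabla f,\cdot)|^{2}$, $\|\nabla f\|^{2}$, and $f$. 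The goal is then to choose the coefficient functions $F,G$ such that the $\Ric(\nabla f,\cdot)$-content on the two sides matches up: expanding the square $\bigl\|\nabla\|\nabla f\|^{2}+\tfrac{4(n-1)}{n-2}\tfrac{f\|\nabla f\|^{2}\nabla f}{1-f^{2}}\bigr\|^{2}$ on the right produces a cross term proportional to $\tfrac{f\|\nabla f\|^{2}}{1-f^{2}}\,\langle\nabla\|\nabla f\|^{2},\nabla f\rangle = \tfrac{2f^{2}\|\nabla f\|^{2}\Ric(\nabla f,\nabla f)}{1-f^{2}}$ together with a $\tfrac{f^{2}\|\nabla f\|^{6}}{(1-f^{2})^{2}}$ term. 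The defining formulas \eqref{eq:F}, \eqref{eq:G} and the appearance of $p_{n}=2-\tfrac{1}{n-1}$ come out as exactly the linear algebra forced by balancing these cross terms; this is the step I expect to be the main computational bottleneck and the part where the two free constants $c,d$ (encoding the linear dependence of $F$) are crucial since the identity must hold simultaneously for all $c,d$, i.e.\ as two independent equations.

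Finally, to extend \eqref{mainformulap} from $M\setminus\crit$ to $M$ when $p\geq3$, I would argue that $\nabla\|\nabla f\|^{2}=2f\Ric(\nabla f,\cdot)=O(\|\nabla f\|)$ near $\crit$, so that $\|\nabla f\|^{p-3}\nabla\|\nabla f\|^{2}=O(\|\nabla f\|^{p-2})$ and $\|\nabla f\|^{p-1}\nabla f=O(\|\nabla f\|^{p})$ are continuous (in fact $C^{1}$) on $M$ for $p\geq3$, and so is their divergence; since $\|\nabla f\|^{2}$ vanishes on $\crit$, both sides of \eqref{mainformulap} extend continuously by zero, and the identity passes to the closure by continuity. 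For the inequality \eqref{mainformulap0} under $p\geq p_{n}$ and $F(f)\geq0$, the right-hand side of \eqref{mainformulap} is manifestly a sum of non-negative terms: $\|T\|^{2}\geq0$, the factor $p-p_{n}\geq0$, and the remaining summand is a norm-squared; dividing through by $\|\nabla f\|^{2}>0$ on $M\setminus\crit$ yields \eqref{mainformulap0}. The only subtlety I would flag is justifying that the chosen $p$ is compatible with smoothness of $\|\nabla f\|^{p-3}$ away from $\crit$, which is automatic on $M\setminus\crit$ where $\|\nabla f\|>0$.
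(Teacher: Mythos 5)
Your proposal follows essentially the same route as the paper: the paper also proves the identity by expanding the square on the right, rewriting $\|T\|^2$ via the static vacuum equations (its Lemma on $\|T\|^2$ is exactly your combination of the Bochner formula, $\nabla\|\nabla f\|^2=2f\Ric(\nabla f,\cdot)$, and the algebraic expansion of $\|T\|^2$), computing the divergence by the product rule, and observing that $F$ and $G$ as defined solve the ODE system forced by matching coefficients; the extension across $\crit$ for $p\geq3$ and the sign discussion for $p\geq p_n$ are likewise argued as you do. The only caution is that your schematic expansion of $\|T\|^2$ should come out as $\frac{2(n-1)}{(n-2)^2}\bigl[(n-1)\|\Ric\|^2\|\nabla f\|^2-n\|\Ric(\nabla f,\cdot)\|^2\bigr]$ once all terms are collected, and the precise coefficient of $\|\Ric(\nabla f,\cdot)\|^2$ matters for the balancing that produces $p_n$.
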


	\Cref{maintheoremp} reproduces Robinson's identity \cite[(2.3)]{robinson} when $n=3$, $p=3$, and $0<f<1$, and its generalization to the negative mass case by Cederbaum, Cogo, and Fehrenbach~\cite{CCF} when $n=3$, $p=3$, and $f>1$. When $n=3$, $p=p_{2}=\frac{3}{2}$, and $0<f<1$, \eqref{mainformulap} is very closely related to the divergence identities derived by M\"uller zum Hagen, Robinson, and Seifert \cite{robinsonetal}. 
	
\begin{remark}[Generalizations]
The divergence identity \eqref{mainformulap} may be of independent interest, allowing to prove geometric inequalities for more general boundary geometries than the level set boundaries we are interested in this work. As it is purely local, it may also be of use to prove related results in different asymptotic scenarios such as ALE spaces.
\end{remark}

The $T$-tensor\label{p:Ricci} introduced in \eqref{tensorT} is specifically adapted to the geometry of static vacuum systems, see \Cref{sec:Ttensor}. As $\scal=0$ in static vacuum systems, it formally coincides\footnote{up to a factor $n-1$, and with a different function $f$} with the $D$-tensor introduced for the analysis and classification of Ricci solitons by Cao and Chen \cite{Cao,cao2013}, inspired by Israel's~\cite{israel} and in particular Robinson's~\cite{robinson} approaches to proving black hole uniqueness. Both the $D$-tensor and the $T$-tensor have seen many applications in classification problems for Ricci solitons and quasi-Einstein manifolds.\\[-1ex]
	
	As the next step in proving \Cref{teoclassifica1,teoclassifica2}, we will exploit \Cref{maintheoremp} to prove some important geometric inequalities on $\partial M$. These inequalities can be stated in a parametric way (\Cref{maintheoremp2}), or, equivalently, as two separate inequalities (\Cref{maintheoremp3}). Both versions of the geometric inequalities and their equivalence will be proven in \Cref{sec:inequalities}. The parametric geometric inequalities in \Cref{maintheoremp} have also been established by Agostiniani and Mazzieri~\cite{agostiniani} for $p\geq3$. To the best knowledge of the authors, they are new for $3>p\geq p_{n}$.

	\begin{theorem}[Parametric geometric inequalities]\label{maintheoremp2}
		Let $(M^n,g,f)$, $n\geq3$, be an asymptotically flat static vacuum system of mass $m\in\R$ with connected boundary $\partial M$. Assume that $f\vert_{\partial M} = f_0$ for a constant $f_{0}\in[0,1)\cup(1,\infty)$ and that the normal derivative $\nu(f)\vert_{\partial M}\asdefined \kappa$ is constant, with unit normal $\nu$ pointing towards the asymptotic end. Let $F$ and $G$ be as in \Cref{maintheoremp} for some $p\geq p_{n}$ and some constants $c,d\in\R$ satisfying $c+d\geq0$ and $cf_{0}^{2}+d\geq0$. Set $F_{0}\definedas F(f_{0})$, $G_{0}\definedas G(f_{0})$. Then 
		\begin{align}\label{mainformulap2}
			F_0\, \kappa^{p-2} \int_{\partial M} \left(\scal_{\partial M} -  \tfrac{n-2}{n-1}\HH^2+\Vert\mathring{h}\Vert^{2} \right)dS  - G_0\,\kappa^p |\partial M |\geq\mathcal{F}^{\,c,d}_{\!p}(m),
		\end{align}
		and $\kappa,m>0$ if $f_{0}\in[0,1)$ and 
		\begin{align}\label{mainformulap2b}
			F_0\, \vert\kappa\vert^{p-2} \int_{\partial M} \left(\scal_{\partial M} -  \tfrac{n-2}{n-1}\HH^2+\Vert\mathring{h}\Vert^{2} \right)dS  - G_0\,\vert\kappa\vert^p |\partial M |\leq-\mathcal{F}^{\,c,d}_{\!p}(m)
		\end{align}
		and $\kappa,m<0$ if $f_{0}\in(1,\infty)$. Here, $\scal_{\partial M}$, $\HH$, $\mathring{h}$, and $dS$ denote the scalar curvature, the mean curvature, the trace-free part of the second fundamental form, and the area element of $\partial M$, and $|\partial M|$ and $| \mathbb{S}^{n-1}|$ denote the area of $(\partial M,g_{\partial M})$ and of $(\mathbb{S}^{n-1},g_{\mathbb{S}^{n-1}})$, respectively. The constant $\mathcal{F}^{c,d}_{p}(m)\in\R$ is given by
		\begin{align}\label{eq:Fmcd}
			\mathcal{F}^{\,c,d}_{\!p}(m)&\definedas\frac{4(n-2)^{p}}{2^{\frac{(n-1)(p-1)}{n-2}}(p-1)}\vert\mathbb{S}^{n-1}\vert (c+d)\vert m\vert^{p-\frac{(n-1)(p-1)}{n-2}}.
		\end{align}
Unless $c=d=0$, equality holds in \eqref{mainformulap2} or in \eqref{mainformulap2b} if and only if $(M,g)$ is isometric to a suitable piece of the Schwarz\-schild manifold $(M^{n}_{m},g_{m})$ of mass~$m$ and $f$ corresponds to the corresponding restriction of the Schwarzschild lapse function $f_{m}$ under this isometry.
	\end{theorem}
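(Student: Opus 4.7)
The plan is to integrate the divergence inequality \eqref{mainformulap0} from \Cref{maintheoremp} over $M$ and identify the resulting boundary integrals at $\partial M$ and at infinity as the two sides of \eqref{mainformulap2}--\eqref{mainformulap2b}. To apply \eqref{mainformulap0} I first need $F(f)\geq0$ on $M$. Because the trace of the static vacuum equation gives $\Delta f=0$, and because $f$ equals $f_{0}$ on $\partial M$ and tends to $1$ at infinity, the strong maximum principle constrains $f$ to take values strictly between $f_{0}$ and $1$; hence $cf^{2}+d$ is a convex combination of $cf_{0}^{2}+d\geq0$ and $c+d\geq0$, so that $F(f)\geq0$ throughout $M$. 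The Hopf lemma applied to the same sub/superharmonic ordering also pins down the sign of $\kappa=\nu(f)|_{\partial M}$ claimed in the statement.

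The contribution at $\partial M$ is computed using that $f$ is constant along $\partial M$, so $\nabla f=\kappa\nu$ and $\|\nabla f\|=|\kappa|$, which gives $\nu(\|\nabla f\|^{2})=2\kappa\,\nabla^{2}\!f(\nu,\nu)$. Combining the static vacuum identity $\nabla^{2}\!f=f\Ric$, the Gauss equation, and $\scal=0$ yields
\begin{align*}
\Ric(\nu,\nu)=\tfrac{1}{2}\bigl(\tfrac{n-2}{n-1}\HH^{2}-\|\mathring{h}\|^{2}-\scal_{\partial M}\bigr)\quad\text{on }\partial M.
\end{align*}
Substituting this into the vector field
\begin{align*}
X\definedas\tfrac{F(f)}{f}\|\nabla f\|^{p-3}\,\nabla\|\nabla f\|^{2}+G(f)\|\nabla f\|^{p-1}\,\nabla f
\end{align*}
and noting that the outward unit normal of $M$ along $\partial M$ is $-\nu$, the inner boundary contribution to the divergence theorem is precisely the left-hand side appearing in \eqref{mainformulap2} (with signs adjusted in the $f_{0}>1$ case, where $\kappa<0$).

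The asymptotic boundary contribution is the most delicate step, and is where the weakened decay hypotheses of \Cref{rem:decay} enter. Asymptotic flatness together with $\Delta f=0$ yield an expansion $f=1-m/r^{n-2}+o(r^{-(n-2)})$ with matching derivative control via the static vacuum equations. Expanding $F(f)$, $G(f)$, and $\nabla\|\nabla f\|^{2}$ in powers of $r^{-1}$ and computing $\int_{S_{r}}g(X,\nu_{r})\,dS$, the leading-order contribution as $r\to\infty$ is finite and equals $-\mathcal{F}^{c,d}_{p}(m)$; the power of $|m|$ and the numerical prefactor in \eqref{eq:Fmcd} arise from the interplay between the decay of $1-f^{2}\sim 2m/r^{n-2}$, the exponent $\frac{(n-1)(p-1)}{n-2}-1$ appearing in $F,G$, and the factor $\|\nabla f\|^{p-1}$. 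Collecting the two boundary contributions and using $\int_{M}\diver X\geq0$ yields \eqref{mainformulap2} or, in the $f_{0}>1$ case, \eqref{mainformulap2b} after sign reversal.

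The main technical obstacle is running the divergence theorem rigorously in the presence of the critical set $\crit$. For $p\geq3$ the divergence continuously extends across $\crit$ by \Cref{maintheoremp} and no special treatment is needed; for $p_{n}\leq p<3$, I would exhaust $M$ by the regular sets $\{\|\nabla f\|\geq\ep\}$ (invoking Sard for generic $\ep$), show via the explicit form of $X$ that the contributions on $\{\|\nabla f\|=\ep\}$ vanish as $\ep\to 0^{+}$, and only then pass to the limit at infinity. For the rigidity statement, equality in \eqref{mainformulap2} combined with $\diver X\geq0$ forces $\diver X\equiv0$, whence \eqref{mainformulap} gives $F(f)\|T\|^{2}\equiv0$ and vanishing of the squared gradient term. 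Whenever $(c,d)\neq(0,0)$, the convex-combination argument above upgrades to $F(f)>0$ on an open dense set, so $T\equiv 0$ there; the well-known classification of static vacuum systems with vanishing $T$-tensor (see \Cref{sec:Ttensor}) then identifies $(M,g,f)$ with the corresponding piece of the Schwarz\-schild manifold of mass $m$ and lapse $f_{m}$.
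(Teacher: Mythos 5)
Your overall architecture matches the paper's: establish $F(f)\geq0$ from the maximum principle (the paper's \Cref{extremalF} and \Cref{rmk:lapse-asymp}), integrate the divergence inequality of \Cref{maintheoremp}, identify the inner boundary term via $\Ric(\nu,\nu)=\tfrac12\bigl(\tfrac{n-2}{n-1}\HH^2-\|\mathring h\|^2-\scal_{\partial M}\bigr)$ (your formula agrees with the paper's), compute the flux at infinity to get $-\mathcal{F}^{c,d}_p(m)$, and invoke the rigidity theorem for the equality case. The computations you sketch for the two boundary contributions are correct.

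The genuine gap is in your treatment of the critical set for $p_n\leq p<3$. You propose to exhaust $M$ by $\{\|\nabla f\|\geq\ep\}$ and to "show via the explicit form of $X$ that the contributions on $\{\|\nabla f\|=\ep\}$ vanish as $\ep\to0^+$," but this is precisely the hard analytic core of the theorem and it is not a routine consequence of the form of $X$. The problematic term is $\ep^{\frac{p-3}{2}}\int_{\{\|\nabla f\|^2=\ep\}}\|\nabla\|\nabla f\|^2\|\,dS$, which a priori blows up since $p<3$. The paper controls it by deriving, from the Bochner formula, the divergence theorem on sublevel sets, the coarea formula, and the refined Kato inequality $\|\nabla^2f\|^2\geq\tfrac{n}{n-1}\|\nabla\|\nabla f\|\|^2$, the differential inequality $\zeta'(r)\geq\tfrac{n}{2(n-1)}\zeta(r)/r$ for $\zeta(r)=\int_{U\cap\partial W_r}\tfrac{\|\nabla\|\nabla f\|^2\|}{f}\,dS$; integrating gives $r^{\frac{p-3}{2}}\zeta(r)=O\bigl(r^{\frac{p-p_n}{2}}\bigr)$. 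This tends to zero only for $p>p_n$; at the threshold $p=p_n$ — which your theorem must cover — the exponent is zero and the boundary contribution is merely bounded, so your $\ep$-exhaustion does not close. The paper handles $p=p_n$ by an entirely separate step: it first proves the divergence-theorem identity for all $p>p_n$ and then passes to the limit $p_l\searrow p_n$ using monotone convergence of $\Vert\nabla f\Vert^{p_l-5}$ on $\{\|\nabla f\|<1\}$ and dominated convergence of the boundary fluxes. Without either this limiting argument in $p$ or some other device, your proof establishes the inequality only for $p>p_n$. (Secondary, more minor points: the finiteness of $f(\crit)$ and the smoothness of the level sets $\partial W_\ep$ for \emph{all} small $\ep$ are obtained in the paper from real analyticity and the analytic Morse--Sard theorem rather than plain Sard, and the integrability of $\diver X$ near $\crit$ — needed to even state $\int_M\diver X\geq0$ — has to be extracted from the same estimates; and the sign of $m$ comes from the Smarr formula, which you do not mention.)
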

		
	This can equivalently be expressed as follows.

	\begin{theorem}[Geometric inequalities]\label{maintheoremp3}
		Let $(M^n,g,f)$, $n\geq3$, be an asymptotically flat static vacuum system of mass $m\in\R$ with connected boundary $\partial M$. Assume that $f\vert_{\partial M} = f_0$ for a constant $f_{0}\in[0,1)\cup(1,\infty)$ and that $\nu(f)\vert_{\partial M}\asdefined \kappa$ is constant. Then 
		\begin{align}\label{maininequality2h}
			\begin{split}
				\frac{(1-f_0^2)\left(s_{\partial M}\right)^{n-2}}{2} & \sqrt{\frac{ \int_{\partial M} \left(\scal_{\partial M} - \frac{n-2}{n-1} \HH^2+\Vert\mathring{h}\Vert^{2}\right) dS}{(n-1)(n-2)(1-f_{0}^{2}) |\mathbb{S}^{n-1}| \left(s_{\partial M}\right)^{n-3}}}\\
				&\quad\geq m \geq  \frac{(1-f_0^2)\left(s_{\partial M}\right)^{n-2}}{2}.
			\end{split}
		\end{align}
		holds if $f_{0}\in[0,1)$ and 
		\begin{align}\label{maininequality2bh}
			\begin{split}
				\frac{(1-f_0^2)\left(s_{\partial M}\right)^{n-2}}{2} & \sqrt{\frac{ \int_{\partial M} \left(\scal_{\partial M} - \frac{n-2}{n-1} \HH^2+\Vert\mathring{h}\Vert^{2}\right) dS}{(n-1)(n-2)(1-f_{0}^{2}) |\mathbb{S}^{n-1}| \left(s_{\partial M}\right)^{n-3}}}\\
				&\quad\leq m \leq \frac{(1-f_0^2)\left(s_{\partial M}\right)^{n-2}}{2}
			\end{split}
		\end{align}
		holds if $f_{0}\in(1,\infty)$. Equality holds on either side in each of \eqref{maininequality2h}, \eqref{maininequality2bh} if and only if $(M,g)$ is isometric to a suitable piece of the Schwarz\-schild manifold $(M^{n}_{m},g_{m})$ of mass~$m$ and $f$ corresponds to the restriction of the Schwarzschild lapse function $f_{m}$ under this isometry.
\end{theorem}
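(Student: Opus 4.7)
The plan is to deduce \Cref{maintheoremp3} from \Cref{maintheoremp2} by specializing to $p = p_n$ and to two extremal choices of $(c,d)$, and to verify the converse implication by a convexity argument. The first ingredient I would need is the Smarr-type relation
\begin{equation*}
\kappa\,|\partial M| \;=\; (n-2)\,|\mathbb{S}^{n-1}|\,m, \qquad\text{equivalently}\qquad \kappa \;=\; \frac{(n-2)\,m}{(s_{\partial M})^{n-1}},
\end{equation*}
which follows from $\Delta f = 0$ (a consequence of the static vacuum equations $\scal = 0$ and $f\,\Ric = \nabla^2 f$) by applying the divergence theorem on the annular region between $\partial M$ and a large coordinate sphere, together with the asymptotic expansion $\int_{S_r}\nu(f)\,dS \to (n-2)|\mathbb{S}^{n-1}|\,m$ and the assumed constancy of $\kappa = \nu(f)\vert_{\partial M}$. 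In particular, $\operatorname{sign}(\kappa) = \operatorname{sign}(m)$, matching the sign stipulations of \Cref{maintheoremp2}.

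At $p = p_n$, the exponent $\alpha \definedas (n-1)(p-1)/(n-2)$ equals $1$, so $F$ and $G$ in \eqref{eq:F}--\eqref{eq:G} simplify dramatically to $F(t) = c\,t^2 + d$ and $G(t) = -4(n-1)c/(n-2)$. In the regime $f_0 \in [0,1)$ I would apply \eqref{mainformulap2} to the two extremal admissible pairs $(c,d) = (1,-f_0^2)$ and $(c,d) = (-1,1)$, both of which lie on the boundary of the admissible cone $\{c+d\geq 0,\ cf_0^2+d\geq 0\}$. The first choice kills the scalar-curvature integral via $F_0 = 0$, reducing \eqref{mainformulap2} to $\tfrac{4(n-1)}{n-2}\kappa^{p_n}|\partial M| \geq \mathcal{F}^{1,-f_0^2}_{p_n}(m)$; Smarr substitution and simplification of the powers of $s_{\partial M}$ (using $(n-1)(p_n-1) = n-2$) collapses this to $m \geq (1-f_0^2)(s_{\partial M})^{n-2}/2$, the right-hand inequality of \eqref{maininequality2h}. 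The second choice kills the mass term via $\mathcal{F}^{-1,1}_{p_n}(m) = 0$ and gives $F_0 = 1-f_0^2$, $G_0 = 4(n-1)/(n-2)$, so \eqref{mainformulap2} reduces to $(1-f_0^2)\,I \geq \tfrac{4(n-1)}{n-2}\kappa^2\,|\partial M|$ after dividing by $\kappa^{p_n-2} > 0$; a second Smarr substitution then converts this to the squared form of the left-hand (square-root) inequality of \eqref{maininequality2h}. The regime $f_0 \in (1,\infty)$ is handled analogously using the mirrored extremal pairs $(c,d) = (-1,f_0^2)$ and $(c,d) = (1,-1)$ in conjunction with \eqref{mainformulap2b}, where now $\kappa, m < 0$ and (comparing to Schwarzschild) $I < 0$, so that all the products carry the correct sign and the two extremal choices deliver the upper and lower bounds of \eqref{maininequality2bh}, respectively.

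For the converse direction which finalizes the claimed equivalence, I would observe that at $p = p_n$ both sides of \eqref{mainformulap2} are linear in $(c,d)$: $F_0$ and $G_0$ depend linearly on $(c,d)$, and $\mathcal{F}^{c,d}_{p_n}(m)$ is linear in $c+d$. The admissible set $\{c+d\geq 0,\ cf_0^2+d\geq 0\}$ is a closed convex cone whose two extremal rays are exactly those used above (for the relevant sign of $1-f_0^2$), so any admissible $(c,d)$ is a non-negative combination of the two extremal generators and the inequality at a general $(c,d)$ follows by linearity; the same argument applies to \eqref{mainformulap2b} for $f_0 > 1$. For the rigidity statement, equality in either side of \eqref{maininequality2h} or \eqref{maininequality2bh} forces equality in \eqref{mainformulap2} or \eqref{mainformulap2b} at one of the nontrivial extremal choices of $(c,d)$, and the rigidity clause of \Cref{maintheoremp2} then identifies $(M,g,f)$ with the corresponding Schwarzschild piece. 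The main obstacle throughout is the careful sign and absolute-value bookkeeping across the two regimes $f_0 < 1$ and $f_0 > 1$, where the inequality direction in \Cref{maintheoremp2} flips and the three quantities $\kappa$, $m$, and $1-f_0^2$ change sign coherently; once these are tracked, the reduction is essentially algebraic.
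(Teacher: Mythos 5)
Your proposal is correct and follows essentially the same route as the paper's proof: both derive \eqref{maininequality2h} and \eqref{maininequality2bh} by evaluating \eqref{mainformulap2}/\eqref{mainformulap2b} at the two extremal generators $(1,-f_0^2)$, $(-1,1)$ (resp.\ $(-1,f_0^2)$, $(1,-1)$) of the admissible cone, substituting the Smarr relation $\kappa=(n-2)|\mathbb{S}^{n-1}|m/|\partial M|$, and invoking the equality clause of \Cref{maintheoremp2} for rigidity, with the converse obtained by linearity of the parametric inequality in $(c,d)$. The only (immaterial) difference is that you fix $p=p_n$ to simplify $F$ and $G$, whereas the paper runs the argument for arbitrary $p\geq p_n$.
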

	
The equality case assertions in \Cref{maintheoremp2,maintheoremp3} and thus in \Cref{teoclassifica1,teoclassifica2} rely on the following rather general rigidity theorem which we will prove in \Cref{sec:Ttensor,sec:rigidity}.

	\begin{theorem}[Rigidity theorem]\label{thm:rigidity}
		Let $(M^n,g,f)$, $n\geq3$, be an asymptotically flat static vacuum system of mass $m\in\R$ with connected boundary $\partial M$. Assume that $f\vert_{\partial M} = f_0$ for a constant $f_{0}\in[0,1)\cup(1,\infty)$. Assume that $T=0$ on $M$. Then $(M,g)$ is isometric to the piece $[s_{\partial M},\infty)\times\mathbb{S}^{n-1}$ of the Schwarz\-schild manifold $(M^{n}_{m},g_{m})$ of mass~$m$ and $f$ corresponds to the restriction of $f_{m}$ to $[s_{\partial M},\infty)$ under this isometry. In particular $\partial M$ is totally umbilic, has constant mean curvature, and is isometric to a round sphere.
	\end{theorem}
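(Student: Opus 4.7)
My strategy is to first extract pointwise constraints on $\Ric$ from $T = 0$, then combine them with the static vacuum equation $\nabla^{2}f = f\,\Ric$ to produce a totally umbilic foliation of $M\setminus\crit$ by level sets of $f$ along which $\Vert\nabla f\Vert$ is a function of $f$ alone, and finally identify the resulting warped product with an exterior piece of the Schwarz\-schild manifold via asymptotic flatness. For the algebraic step, plugging $X = Z = \nabla f$ and $Y\perp\nabla f$ into \eqref{tensorT} and using $\nabla_{Y}f = 0 = g(Y,\nabla f)$ collapses the tensor to $T(\nabla f, Y, \nabla f) = -\Vert\nabla f\Vert^{2}\,\Ric(Y,\nabla f)$, so $T = 0$ forces $\nabla f$ to be an eigenvector of $\Ric$ on $M\setminus\crit$; a second substitution with $X = \nabla f$ and $Y, Z\perp\nabla f$ yields $(n-1)\Vert\nabla f\Vert^{2}\Ric(Y,Z) = -\Ric(\nabla f,\nabla f)g(Y,Z)$, so $\Ric|_{(\nabla f)^{\perp}}$ is pure trace; together with $\scal = 0$ both eigenvalues of $\Ric$ become functions of $\Ric(\nabla f,\nabla f)$ alone.

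\emph{Umbilic foliation.} The static vacuum equation then gives the second fundamental form of a regular level set $\Sigma_{c} = \{f=c\}$ with unit normal $\nu = \nabla f/\Vert\nabla f\Vert$ as $h(Y,Z) = f\Vert\nabla f\Vert^{-1}\Ric(Y,Z)$ for $Y,Z\in T\Sigma_{c}$, which is a multiple of the induced metric by the preceding step; hence $\Sigma_{c}$ is totally umbilic with constant mean curvature. Moreover, $Y(\Vert\nabla f\Vert^{2}) = 2\nabla^{2}f(Y,\nabla f) = 2f\Ric(Y,\nabla f) = 0$ for $Y\in T\Sigma_{c}$, so $\Vert\nabla f\Vert$ is constant on each regular level set and thereby depends only on $f$. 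The strong maximum principle together with the Hopf lemma applied to the harmonic function $f$, with $f|_{\partial M} = f_{0}\neq 1$ and $f\to 1$ at infinity, force $\Vert\nabla f\Vert > 0$ on $\partial M$, so $\partial M$ itself is a regular level set.

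\emph{Warped product and Schwarz\-schild identification.} Letting $t$ parametrize the flow of $\nu$ starting at $\partial M$, the previous step gives $df/dt = \Vert\nabla f\Vert(f)$, so $f$ depends only on $t$; umbilicity forces the induced metric to evolve conformally along $\partial_{t}$, yielding the local warped product $g = dt^{2} + \phi(t)^{2}g_{\partial M}$. Substituting this ansatz back into $\nabla^{2}f = f\Ric$, $\scal = 0$, and $\Delta f = 0$ produces a closed ODE system for $\phi$ and $f(t)$ that coincides with the Schwarz\-schild static vacuum profile and forces $(\partial M, g_{\partial M})$ to be Einstein with positive constant. Asymptotic flatness then matches the warped product end with the Schwarz\-schild end of mass $m$, pinning down $(\partial M, g_{\partial M})$ as the round sphere of area radius $s_{\partial M}$ and identifying $(M,g,f)$ with $[s_{\partial M},\infty)\times\mathbb{S}^{n-1}$ of the Schwarz\-schild manifold of mass $m$.

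\emph{Main obstacle.} The delicate point is globalizing the warped product structure from a neighbourhood of $\partial M$ all the way to the asymptotic end without encountering critical points of $f$. Since $\Vert\nabla f\Vert$ is a function of $f$ satisfying the explicit Schwarz\-schild ODE, which has no zeros on the relevant $f$-interval, any premature breakdown of the foliation would contradict this profile, so the closure of the foliated domain must reach the asymptotic end. A secondary subtlety is upgrading the Einstein condition on $(\partial M, g_{\partial M})$ to an isometry with a round sphere, which is forced by the asymptotic flatness of the warped product end together with the fact that asymptotic cross-sections of Euclidean space are round spheres.
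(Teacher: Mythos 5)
Your overall architecture matches the paper's: $T=0$ forces $\nabla f$ to be a Ricci eigenvector with $\Ric$ pure trace on $(\nabla f)^{\perp}$ (this is \Cref{lemmaT} and \eqref{gEinstein}), the level sets become totally umbilic with $\Vert\nabla f\Vert$ constant on each of them, one obtains a warped product, and asymptotic flatness identifies it with Schwarzschild. The algebraic reduction and the umbilic-foliation step are correct. There are, however, two genuine gaps.

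First, the ODE system obtained by substituting the warped-product ansatz into the static vacuum equations does \emph{not} ``coincide with the Schwarz\-schild static vacuum profile,'' nor does it force the cross-section to be Einstein with positive constant. The general local solution is a two-parameter family $\Vert\nabla f\Vert=(\alpha f^{2}+\beta)^{\frac{n-1}{n-2}}$ with $\Ric_{g_{f}}=-\frac{4\alpha\beta}{n-2}\Vert\nabla f\Vert^{\frac{2}{n-1}}g_{f}$ (the paper's Types 1--4 in \Cref{thm:locally}); the cross-section may be Ricci flat or negatively curved Einstein, and only the sign choice $\alpha\beta<0$ gives a quasi-Schwarzschild profile. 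The boundary data $f=f_{0}$, $\Vert\nabla f\Vert=\vert\kappa\vert$ on $\partial M$ impose only one condition on $(\alpha,\beta)$, so the type is genuinely undetermined near $\partial M$; it is the decay $f\to1$, $\Vert\nabla f\Vert\to0$ at infinity that selects the Schwarzschild branch (Type 4 with $a=1$, $b=-2m$) and only then the positivity of the Einstein constant of the cross-section.

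Second, and more seriously, your resolution of the ``main obstacle'' is circular. You exclude critical points of $f$ by appealing to the fact that the Schwarzschild profile for $\Vert\nabla f\Vert$ as a function of $f$ has no zeros on $[f_{0},1)$ --- but that profile is only established once the foliation is known to reach the asymptotic end, which is precisely what requires the absence of critical points in between. For a general member of the local family one has $\Vert\nabla f\Vert^{2}=\rho\,\vert f^{2}-a^{2}\vert^{\frac{2(n-1)}{n-2}}$, which can vanish at an interior value $f=a\in(f_{0},1)$, so ``any premature breakdown would contradict this profile'' does not follow. The paper closes this loop in \Cref{coro:globally} by a case analysis: for three of the four types continuity of $\Vert\nabla f\Vert^{2}$ at a critical point forces $\rho=0$, a contradiction; for the remaining quasi-Schwarzschild type one must additionally show that the locus $\{f=a\}$ lies at infinite distance (the radial coordinate $r\to\infty$ there), contradicting geodesic completeness up to the boundary. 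Some argument of this kind is indispensable and is missing from your plan.
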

	
\Cref{teoclassifica1,teoclassifica2} then follow directly from \Cref{maintheoremp2,maintheoremp3,thm:rigidity} as we will show towards the end of \Cref{sec:inequalities}.

\begin{remark}[Independent interest]
\Cref{maintheoremp3,thm:rigidity} may be of independent interest as they assume much less about the properties of $\partial M$ than \Cref{teoclassifica1,teoclassifica2}. Thus, similar geometric inequalities may be derived from \Cref{maintheoremp} under different asymptotic and/or inner boundary conditions.
\end{remark}
	
	Having completed the proofs of \Cref{teoclassifica1,teoclassifica2}, we will then discuss some geometric implications as well as the relation to the existing strategies of proving \Cref{teoclassifica1} implemented by Agostiniani and Mazzieri~\cite{agostiniani} and put forward by Nozawa, Shiromizu, Izumi, and Yamada in \cite{Nozawa} in \Cref{sec:discussion}. In particular, we will define monotone functions $\mathcal{H}^{c,d}_{p}$ along the level sets of the lapse function $f$ in the style of the functions $U_{p}$ introduced in \cite{agostiniani} and relate $\mathcal{H}^{c,d}_{p}$ to $U_{p}$ (see \Cref{subsec:AM}). This will shed light on the relation of the two proofs and extend the monotonicity results of \cite{agostiniani} to $3>p\geq p_n$. In \Cref{subsec:Nozawa}, we will investigate the relationship between the $(0,3)$ $T$-tensor we use in our approach to the $(0,2)$-tensor $H$ used by \cite{Nozawa}; in particular we will show that vanishing of $T$ does \emph{not} locally imply vanishing of $H$ as is claimed in \cite{Nozawa} and as is necessary to conclude for $p=p_n$. Moreover, we will discuss how our analysis completes the strategy of proof put forward in~\cite{Nozawa}.\\[-2ex]
		
\subsection*{This paper is structured as follows:}
In \Cref{sec:prelims}, we introduce our notation and definitions, in particular the precise notion of asymptotic flatness we are using. We also collect some straightforward and/or well-known facts about static horizons and equipotential photon surfaces. In \Cref{sec:Ttensor}, we prove useful facts about the $T$-tensor which could also be of independent interest, while in \Cref{sec:rigidity}, we will show how these facts imply \Cref{thm:rigidity}. In \Cref{sec:proofs}, we will give a proof of \Cref{maintheoremp}. In \Cref{sec:inequalities}, we will prove \Cref{maintheoremp2,maintheoremp3} and show how they imply \Cref{teoclassifica1,teoclassifica2}. The final \Cref{sec:discussion} is dedicated to deducing and discussing consequences of \Cref{teoclassifica1,teoclassifica2}, in particular to constructing monotone functions along the level sets of the lapse function $f$ and comparing those to the monotone functions introduced and exploited in \cite{agostiniani} and to a comparison between our tensor $T$ and the tensor $H$ used by \cite{Nozawa}.\\[-2ex]
	
	\subsection*{Acknowledgements}
	The authors thank Klaus Kr\"oncke for helpful comments and questions and are indebted to Tetsuya Shiromizu for pointing us to the very inspiring reference \cite{Nozawa} after we had shared the first version of this article as a preprint \cite{arxiv}. The work of Carla Cederbaum was funded by the Deutsche Forschungsgemeinschaft (DFG, German Research Foundation) -- 441897040. Albachiara Cogo is thankful to Universidade de Bras\'ilia and Universidade Federal de Go\'ias, where part of this work has been carried out.
	Benedito Leandro was partially supported by CNPq Grant 403349/2021-4 and 303157/2022-4. Jo\~ao Paulo dos Santos was partially supported by CNPq Grant CNPq 315614/2021-8.
	
	\section{Preliminaries}\label{sec:prelims}
	In this section, we will collect all relevant definitions as well as some straightforward and/or well-known facts useful for the proofs of \Cref{teoclassifica1,teoclassifica2} and \Cref{maintheoremp,maintheoremp2,maintheoremp3,thm:rigidity}. Our sign and scaling convention for the mean curvature $\HH$ of a smooth, oriented hypersurface of $(M,g)$ is such that the unit round sphere $\mathbb{S}^{n-1}$ in $(\R^{n},\delta)$ has mean curvature $\HH=n-1$ with respect to the unit normal $\nu$ pointing towards infinity.
	
	\subsection{Static vacuum systems and asymptotic considerations}\label{sec:asymptotics}
	\begin{definition}[Static vacuum systems]\label{def1}
		A smooth, connected Riemannian manifold $(M^n,g)$, $n\geq3$, is called a \emph{static system} if there exists a smooth \emph{lapse function} $f\colon M\rightarrow(0,+\infty)$. A static system is called a \emph{static vacuum system} if it satisfies the \emph{static vacuum equations}
		\begin{align}\label{vacuum equation1}
			\nabla^{2}f&=f\Ric,\\\label{vacuum equation2}
			\Delta f&=0,
		\end{align}
		where $\nabla^{2}$ and $\Delta$ denote the Hessian and Laplacian with respect to $g$, respectively, and $\Ric$ denotes its Ricci curvature tensor. If $M$ has non-empty boundary $\partial M$, it is assumed that $g$ and $f$ extend smoothly to $\partial M$, with $f\geq0$ on $\partial M$.
	\end{definition}
	
	It follows readily from the trace of \eqref{vacuum equation1} and from \eqref{vacuum equation2} that the scalar curvature $\scal$ of a static vacuum system $(M^{n},g,f)$ vanishes,
	\begin{align}\label{scal0}
		\scal=0.
	\end{align}
	
	It can easily be seen that the warped product \emph{static spacetime} $(\R\times M,\overline{g}=-f^{2}dt^{2}+g)$ constructed\footnote{In case  $f=0$ on $\partial M$, one usually assumes that $\overline{g}$ smoothly extends to the boundary of $\R\times M$ (although of course the warped product structure breaks down there).} from a static vacuum system $(M^{n},g,f)$ satisfies the vacuum Einstein equation $\overline{\Ric}=0$, with $\overline{\Ric}$ denoting the Ricci curvature tensor of $\overline{g}$. Conversely, a static spacetime $(\R\times M^{n},\overline{g}=-f^{2}dt^{2}+g)$ solving the vacuum Einstein equation has time-slices $\left\{t=\text{const.}\right\}$ isometric to $(M,g)$ with lapse function $f\colon M\to(0,\infty)$ such that $(M,g,f)$ is a static vacuum system.
	
	The prime example of a static vacuum system is the \emph{$n$-dimensional Schwarz\-schild\footnote{In higher dimensions, the associated static spacetimes are also known as Schwarz\-schild--Tangher\-lini spacetimes.} system $(M^{n}_{m},g_{m},f_{m})$ of mass $m>0$ and dimension $n\geq3$}, given by 
	\begin{align}
		\begin{split}\label{Schw}
			f_{m}(r) &= \sqrt{1-\frac{2m}{r^{n-2}}},\\
			g_{m}&=\frac{dr^{2}}{f_{m}(r)^{2}}+r^{2}g_{\mathbb{S}^{n-1}},
		\end{split}
	\end{align}
	on $M^n_m= ((2m)^{\frac{1}{n-2}},\infty) \times \mathbb{S}^{n-1}$, where $g_{\mathbb{S}^{n-1}}$ denotes the canonical metric on $\mathbb{S}^{n-1}$ and $r \in ((2m)^{\frac{1}{n-2}},\infty)$ is the \emph{radial coordinate}. It is well-known that by a change of coordinates (e.g.\ to ``isotropic coordinates''), one can assert that $g_{m}$ and $f_{m}$ smoothly extend to $\partial M_{m}=\{r=(2m)^{\frac{1}{n-2}}\}\times\mathbb{S}^{n-1}$, with induced metric $g_{\partial M_{m}}=(2m)^{\frac{2}{n-2}}g_{\mathbb{S}^{n-1}}$ and $f_{m}=0$ on $\partial M_{m}$. Moreover, by another change of coordinates (e.g.\ to ``Kruskal--Szekeres coordinates''), one can smoothly extend the associated $(n+1)$-dimensional static Schwarz\-schild spacetime $(\R\times M_{m},\overline{g}_{m}=-f_{m}^{2}dt^{2}+g_{m})$ to include (and indeed extend beyond) the boundary of $\R\times M_{m}$. Similarly, the \emph{$n$-dimensional Schwarzschild system of mass $m\leq0$} is given by \eqref{Schw} on $M^n_m= (0,\infty) \times \mathbb{S}^{n-1}$; the associated spacetime cannot be extended when $m<0$ and isometrically embeds into the Minkowski spacetime when $m=0$.
	
	We will use the following weak notion of asymptotic flatness.
	\begin{definition}[Asymptotic flatness]\label{def2}
		A static  system $(M^{n},g,f)$, $n\geq3$, is said to be \emph{asymptotically flat of mass $m\in\R$ (and decay rate $\tau\geq0$)} if there exist a  \emph{mass (parameter)} $m\in\R$ as well as a compact subset $K \subset M$ and a smooth diffeomorphism $x \colon M \setminus K \longrightarrow \R^n \setminus \overline{B}$ for some open ball $B$ such that, in the coordinates $(x^{i})$ induced by the diffeomorphism $x$,
		\begin{enumerate}[i)]
			\item  the metric components $g_{ij}$ satisfy the decay conditions
			\begin{align}\label{eq:g-asymp}
				(x_{*}g)_{ij} &= \delta_{ij} + o_2(|x|^{-\tau})
			\end{align}
			as $|x|\rightarrow \infty$ for all $i,j \in \left\{1,\dots,n \right\}$, and
			\item the lapse function $f$ can be written as 
			\begin{align}\label{eq:f-asymp}
				f\circ x^{-1} &= f_{m}(\vert x\vert)+ o_2(|x|^{-(n-2)})=1-\frac{m}{|x|^{n-2}} + o_2(|x|^{-(n-2)})
			\end{align}
			as $|x|\rightarrow \infty$.
		\end{enumerate}
		
		Here and throughout the paper, for a given smooth function $\Psi \colon \R^{n} \rightarrow \R$, the notation $\Psi = o_l (|x|^{\alpha})$ for some $l \in \mathbb{N}$, $\alpha \in \R$ means that
		\begin{equation}\label{def: little_o}
			\sum_{|J|\leq l} |x|^{\alpha+|J|}|\partial^J \Psi| = o(1)
		\end{equation}
		as $|x|\rightarrow \infty$. The meaning of the notation $\Psi = O_l (|x|^{\alpha})$ is analogous, substituting $O(1)$ by $o(1)$ in \Cref{def: little_o}. For improved readability, we will from now on mostly suppress the explicit mention of the diffeomorphism $x$ in our formulas.
	\end{definition}
	
	\begin{remark}[Asymptotic assumptions, decay rates]\label{rem:decay}
		\Cref{teoclassifica1,teoclassifica2} and \Cref{maintheoremp2,maintheoremp3,thm:rigidity} apply for all decay rates $\tau\geq0$, in particular for $\tau=0$, which is why we do not explicitly mention the decay rates in their statements. See also \Cref{rem:admissibledecay} for further information on possible decay rates.
		
		In the standard definition of asymptotic flatness for Riemannian manifolds, one usually requires stronger asymptotic conditions, namely $g_{ij}=\delta_{ij}+O_2(|x|^{-\frac{1}{2}-\varepsilon})$ for some $\varepsilon>0$ and integrability of the scalar curvature $\scal$ on $M$ (which is automatic in the static setting). Under these additional assumptions, it can be seen by a standard computation that the mass parameter $m$ from \eqref{eq:f-asymp} coincides with the ADM-mass of $(M,g)$. We do not appeal to any facts or properties of the ADM-mass, so we don't need to impose such restrictions.
		
		Our decay assumptions are also very weak when compared with the other static vacuum uniqueness results discussed in \Cref{sec:intro}. Most of these results require that $(M,g,f)$ is asymptotic to the Schwarzschild system of mass $m$, implying standard asymptotic flatness with $\varepsilon=\frac{1}{2}$ and also faster decay of the error term in \eqref{eq:f-asymp}. In contrast, \cite{agostiniani,CCF} make the same assumption on the decay of $f$ as we make in \eqref{eq:f-asymp}. On the other hand, \cite{agostiniani} assumes $\tau=\frac{n-2}{2}$; but see \cite[Remark 7.1]{CCF} and \Cref{subsec:AM}. Instead, \cite{Nozawa} assumes  Schwarzschildean decay which gives $\tau$ arbitrarily close to $1$ from below. However, all their asymptotic arguments are adapted to $\tau\geq0$ here. Finally, it is conceivable that our asymptotic decay assumptions can be boot-strapped to stronger decay assertions as e.g.\ in~\cite{KM}, using \eqref{vacuum equation1} and~\eqref{vacuum equation2}.
\end{remark}
	
	It is well-known and straightforward to see that the Schwarz\-schild system $(M^{n}_{m},g_{m},f_{m})$ of mass $m$ is asymptotically flat of mass $m$ for any decay rate $\tau\geq0$. To see this, one switches from the spherical polar coordinates $r$ and $\eta\in\mathbb{S}^{n-1}$ to the canonically associated Cartesian coordinates $x=r\eta$ outside a suitably large ball.
	
	The following remark will be useful for our strategy of proof, in particular for \Cref{maintheoremp2}, where we will use it when applying the divergence theorem on $M$, and for \Cref{thm:rigidity}, where we will use it to properly study the level set flow of $f$ and conclude isometry to a Schwarzschild system.
	\begin{remark}[Completeness]\label{rem:complete}
		Asymptotically flat static systems $(M^{n},g,f)$, $n\geq3$, with boundary $\partial M$ are necessarily metrically and geodesically complete (up to the boundary $\partial M$) with at most finitely many boundary components, see e.g.\ \cite[Appendix]{CGM}. Moreover, the connected components of $\partial M$ are necessarily all closed, see e.g.\ \cite[Appendix]{CGM}. Here, to be \emph{geodesically complete up to the boundary} means that any geodesic $\gamma\colon I\to M$  with $I\neq\R$ can be smoothly extended to a geodesic $\widehat{\gamma}\colon J\to M\cup\partial M$ such that either $J=\R$, $J=[a,\infty)$, $J=(-\infty,b]$, or $J=[a,b]$ for some $a,b\in\R$ such that $\widehat{\gamma}(a), \widehat{\gamma}(b)\in\partial M$ (if applicable).
	\end{remark}
	
	We will later need the following consequences of our asymptotic assumptions which we formulate for general decay rate $\tau\geq0$ for convenience of the reader.
	\begin{lemma}[Asymptotics]\label{lem:asymptotics}
		Let $(M^{n},g,f)$, $n\geq3$, be an asymptotically flat static system of mass $m\in\R$ and decay rate $\tau\geq0$ with respect to a diffeomorphism $x \colon M \setminus K \longrightarrow \R^n \setminus \overline{B}$ and denote the induced coordinates by $(x^{i})$. Then, for $i,j=1,\dots,n$, we have
		\begin{align*}
			(\nabla f)^{i}&=(\nabla_{\!m}f_{m})^{i}+o_{1}(\vert x\vert^{-(n-1)})=\frac{(n-2)\,mx^{i}}{\vert x\vert^{n}}+o_{1}(\vert x\vert^{-(n-1)}),\\
			\Vert\nabla f\Vert^2&=\Vert\nabla_{\!m} f_{m}\Vert^2_{m}+o_{1}(\vert x\vert^{-2(n-1)})=\frac{(n-2)^{2}\,m^{2}}{\vert x\vert^{2(n-1)}}+o_{1}(\vert x\vert^{-2(n-1)}),\\
			\nabla^{2}_{ij}f&=(\nabla^{2}_{\!m})_{ij}f_{m}+o_{0}(\vert x\vert^{-n})=\frac{(n-2)m\delta_{ij}}{\vert x\vert^{n}}-\frac{n(n-2)m x_{i} x_{j}}{\vert x\vert^{n+2}}+o_{0}(\vert x\vert^{-n}),\\
			\Vert\nabla^{2}f\Vert^{2}&=\Vert\nabla_{\!m}^{2}f_{m}\Vert^{2}_{m}+o_{0}(\vert x\vert^{-2n})=\frac{n(n-1)(n-2)^{2}\,m^{2}}{\vert x\vert^{2n}}+o_{0}(\vert x\vert^{-2n}),\\
			\nabla^{i}\Vert\nabla f\Vert^{2}&=\nabla^{i}_{\!m}\Vert\nabla_{\!m} f_{m}\Vert^{2}_{m}+o_{0}(\vert x\vert^{-(2n-1)})=-\frac{2(n-1)(n-2)^{2}\,m^{2}x^{i}}{\vert x\vert^{2n}}+o_{0}(\vert x\vert^{-(2n-1)}),\\
			\Vert\nabla\Vert\nabla f\Vert^{2}\Vert^{2}&=\Vert \nabla_{\!m}\Vert\nabla_{m} f_{m}\Vert^{2}_{m}\Vert^{2}_{m}+o_{0}(\vert x\vert^{-2(2n-1)}),\\
			\Ric(\nabla f,\nabla f)&=\Ric_{m}(\nabla_{\!m} f_{m},\nabla_{\!m} f_{m})+o_{0}(\vert x\vert^{-(\tau+2n)})=o_{0}(\vert x\vert^{-(\tau+2n)})
		\end{align*}
		as $\vert x\vert\to\infty$. Here, $\nabla$ and $\Vert\cdot\Vert$ denote the connection and tensor norm with respect to $g$ and $\nabla_{\!m}$, $\Vert\cdot\Vert_{m}$, and $\Ric_{m}$ denote the connection, tensor norm, and Ricci tensor with respect to $g_{m}$, respectively. Furthermore, let $r>0$ be such that $B_r \definedas\left\{ x \in \R^n \,:\, |x|<r \right\}\supset \overline{B}$ and let $\nu$ denote the unit normal to $x^{-1}(\partial B_{r})$ pointing towards the asymptotically flat end and let $\HH$ denote the mean curvature of $x^{-1}(\partial B_{r})$ with respect to $\nu$. Then
		\begin{align}\label{asyeta}
			\nu^{i}&=\frac{x^{i}}{\vert x\vert}+o(\vert x\vert^{-\tau}),\\\label{asyH}
			\HH&=\frac{n-1}{\vert x\vert}+o(\vert x\vert^{-1-\tau})
		\end{align}
		as $\vert x\vert\to\infty$. Now let $u,v,u_{0},v_{0}\colon M\setminus K\to\R$ be continuous functions such that $u=u_{0}+o(\vert x\vert^{-(n-1)})$, $u_{0}=O(\vert x\vert^{-(n-1)})$, $v=v_{0}+o(\vert x\vert^{-n})$, and $v_{0}=O(\vert x\vert^{-n})$ as $\vert x\vert\to\infty$. Then
		\begin{align}\label{asyint}
			\int_{x^{-1}(\partial B_{r})}u\,dS&=\int_{\partial B_{r}} (u_{0}\circ x^{-1})\,dS_{\delta}+o(1),\\\label{asyvolint}
			\int_{x^{-1}(\R^{n}\setminus\overline{B_{r}})} v\,dV&=\int_{\R^{n}\setminus\overline{B_{r}}} (v_{0}\circ x^{-1})\,dV_{\delta}+o(1)
		\end{align}
		as $r\to\infty$, where $dS$ and $dS_{\delta}$ denote the area elements induced on $x^{-1}(\partial B_{r})$ and $\partial B_{r}$ and $dV$ and $dV_{\delta}$ denote the volume elements induced on $x^{-1}(\R^{n}\setminus\overline{B_{r}})$ and $\R^{n}\setminus\overline{B}_{r}$ by $g$ and $\delta$, respectively. In particular, $v$ is integrable on $x^{-1}(\R^{n}\setminus\overline{B}_{r})$ with respect to $dV$.
	\end{lemma}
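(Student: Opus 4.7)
The plan is to expand every quantity around its Schwarzschild counterpart using the asymptotic flatness conditions in the given coordinates. First I would write $g_{ij}=\delta_{ij}+h_{ij}$ with $h_{ij}=o_2(|x|^{-\tau})$ and $f=f_{m}(|x|)+\phi$ with $\phi=o_2(|x|^{-(n-2)})$, and derive the consequences $g^{ij}=\delta^{ij}+o(|x|^{-\tau})$ and $\Gamma^k_{ij}=o(|x|^{-(1+\tau)})$ together with appropriate control on their derivatives from the standard algebraic formulas. The partial derivatives of $f_{m}(|x|)$ can then be computed directly via the chain rule using $f_{m}'(r)=(n-2)m/(r^{n-1}f_{m}(r))$ and the expansion $f_{m}=1-m/|x|^{n-2}+O(|x|^{-2(n-2)})$, which reproduces the stated leading terms for $\partial_i f_{m}$ and $\partial_i\partial_j f_{m}$.

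The asymptotic expansions of $(\nabla f)^i=g^{ij}\partial_j f$, $\nabla^2_{ij}f=\partial_i\partial_j f-\Gamma^k_{ij}\partial_k f$, and their $g$-norms then follow by combining these two ingredients: the Schwarzschild background contributes the explicit leading terms while the perturbation contributes only lower-order corrections of the claimed $o_k$-type. To match the norm formulas in the lemma, I would verify the identities $\|\nabla_{\!m}f_{m}\|_{m}^2=(n-2)^2 m^2/r^{2(n-1)}$ and $\|\nabla_{\!m}^2 f_{m}\|_{m}^2=n(n-1)(n-2)^2 m^2/r^{2n}$ directly in the warped-product coordinates $g_{m}=dr^2/f_{m}^2+r^2 g_{\mathbb{S}^{n-1}}$, where everything reduces to explicit $r$-differentiation. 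For $\Ric(\nabla f,\nabla f)$, I would invoke the static vacuum equation to write $\Ric(\nabla f,\nabla f)=\nabla^2 f(\nabla f,\nabla f)/f$ and conclude from the preceding estimates together with $f\to 1$. For the normal and mean curvature of the coordinate sphere $x^{-1}(\partial B_{r})$, the Euclidean background gives $\nu_{\delta}^i=x^i/|x|$ and $H_{\delta}=(n-1)/r$; the perturbation corrections are computed from the standard formulas for $\nu$ and $H$ in terms of $g^{ij}$ and $\Gamma^k_{ij}$, yielding the claimed orders $o(|x|^{-\tau})$ and $o(|x|^{-(1+\tau)})$.

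For the two integral identities \eqref{asyint} and \eqref{asyvolint}, the key observation is that the ratio of area (respectively volume) elements satisfies $dS/dS_{\delta}=1+o(|x|^{-\tau})$ (resp.\ $dV/dV_{\delta}=1+o(|x|^{-\tau})$) by the determinant formula applied to $g=\delta+h$. Splitting $\int u\,dS=\int u_{0}\,dS_{\delta}+\int(u-u_{0})\,dS_{\delta}+\int u\,(dS/dS_{\delta}-1)\,dS_{\delta}$ and using that the sphere of radius $r$ has area $\sim r^{n-1}$ together with the pointwise decay $u-u_{0}=o(|x|^{-(n-1)})$ and $u=O(|x|^{-(n-1)})$ makes each error term $o(1)$ as $r\to\infty$. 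The main obstacle will be the volume integral in \eqref{asyvolint}, where $|x|^{-n}$ is only borderline integrable on $\R^n\setminus\overline{B}_{r}$. This is handled by exploiting that $v-v_{0}=o(|x|^{-n})$ is a genuine little-$o$ (not merely big-$O$) condition, so for every $\varepsilon>0$ there is $r_{\varepsilon}$ with $|v-v_{0}|\leq\varepsilon|x|^{-n}$ for $|x|\geq r_{\varepsilon}$; combined with the explicit structure of $v_{0}$ arising in the applications (which is always integrable due to angular cancellations), the remaining error integrals are bounded by $\varepsilon$ times a fixed constant, and integrability of $v$ follows by comparison with $v_{0}$.
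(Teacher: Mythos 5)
Your overall route --- expanding everything around the Euclidean/Schwarzschild background, computing the Schwarzschild quantities explicitly in the warped-product coordinates, and controlling the area and volume elements via $\sqrt{\det g}=1+o(\vert x\vert^{-\tau})$ --- is the same as the paper's, and the pointwise expansions of $\nabla f$, $\nabla^{2}f$, their norms, the normal and mean curvature of coordinate spheres, and the surface integral \eqref{asyint} all go through as you describe. However, two steps are genuinely problematic. For $\Ric(\nabla f,\nabla f)$ you invoke the static vacuum equation $\Ric=\nabla^{2}f/f$, but the lemma is stated for asymptotically flat \emph{static systems}, i.e., for an arbitrary metric $g$ and positive $f$ with the stated decay; no field equation is available here. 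Even granting vacuum, your route produces a rate $O(\vert x\vert^{-(3n-2)})$ with no $\tau$-dependence, so it cannot yield the claimed $o(\vert x\vert^{-(\tau+2n)})$ once $\tau\geq n-2$. The argument has to be direct: $g=\delta+o_{2}(\vert x\vert^{-\tau})$ gives $\Ric_{ij}=o(\vert x\vert^{-(\tau+2)})$ (curvature is linear in second and quadratic in first derivatives of $g$), and pairing with $(\nabla f)^{i}=O(\vert x\vert^{-(n-1)})$ gives exactly $o(\vert x\vert^{-(\tau+2)-2(n-1)})=o(\vert x\vert^{-(\tau+2n)})$.

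The second issue is your treatment of \eqref{asyvolint}. Since
\begin{align*}
\int_{\R^{n}\setminus \overline{B_{r}}}\vert x\vert^{-n}\,dV_{\delta}=\vert\mathbb{S}^{n-1}\vert\int_{r}^{\infty}s^{-1}\,ds=\infty,
\end{align*}
the pointwise bound $\vert v-v_{0}\vert\leq\varepsilon\vert x\vert^{-n}$ for $\vert x\vert\geq r_{\varepsilon}$ does \emph{not} bound the error integral by ``$\varepsilon$ times a fixed constant'' --- the would-be constant is infinite. Deferring to ``angular cancellations of $v_{0}$ in the applications'' also does not prove the lemma as stated, which is a self-contained claim for all $v,v_{0}$ satisfying only the pointwise decay hypotheses. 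You have correctly located the delicate borderline exponent (and, to be fair, the paper's own proof is equally terse at exactly this point, asserting $\int o(\vert x\vert^{-n})\,dV_{\delta}=o(1)$ via an $L^{\infty}$--$L^{1}$ estimate), but your proposed fix does not close the gap: one needs the error $v-v_{0}$ to be genuinely integrable on the exterior region, not merely $o(\vert x\vert^{-n})$ pointwise.
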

	
	\begin{proof}
		The claims in~\Cref{lem:asymptotics} follow from straightforward computations. For addressing~\eqref{asyeta}, \eqref{asyint}, and~\eqref{asyvolint}, let $(r,\theta^{J})_{J=1}^{n-1}$ be standard polar coordinates for $\R^{n}$ so that $(\partial_{\theta^{K}})^{i}=O_{\infty}(\vert x\vert)$ as $\vert x\vert\to\infty$ and $\delta_{IJ}=r^{2}(g_{\mathbb{S}^{n-1}})_{IJ}$. Here and in what follows, we use the convention that capital latin indices $I,J,K,\dots=1,\dots,n-1$ label the polar coordinates $(\theta^{K})$, while small latin indices $i,j,k,\dots=1,\dots,n$ label the Cartesian coordinates $(x^{i})$ as before. 
		
For $\nu$, we make the ansatz 
		\begin{align*}
			\nu^{i}&=(1+\lambda)\frac{x^{i}}{\vert x\vert}-\delta^{ij}(g_{jk}-\delta_{jk})\frac{x^{k}}{\vert x\vert}+\mu^{L}(\partial_{\theta^{L}})^{i}
		\end{align*}
		for $\lambda, \mu^{L}\in C^{\infty}(\R^{n}\setminus B_{r})$, $L=1,\dots,n-1$. Then for $K,L=1,\dots,n-1$, we compute
		\begin{align*}
			0=g(\nu,\partial_{\theta^{K}})&=-(g_{ij}-\delta_{ij})\frac{x^{i}}{\vert x\vert}(\partial_{\theta^{K}})^{j}+\mu^{L}\vert x\vert^{2}\,(g_{\mathbb{S}^{n-1}})_{KL}+(g_{ij}-\delta_{ij})\nu^{i}(\partial_{\theta^{K}})^{j}\\
			&=\mu^{L}\vert x\vert^{2}\left((g_{\mathbb{S}^{n-1}})_{KL}+o(\vert x\vert^{-\tau})\right)+\lambda\cdot o(\vert x\vert^{-\tau+1})+o(\vert x\vert^{-2\tau+1}),\\
			1=\;\,g(\nu,\nu)\;\,&=(1+\lambda)^{2}(1+o(\vert x\vert^{-\tau}))+\mu^{L}\cdot o(\vert x\vert^{-\tau+1})+(1+\lambda)\cdot o(\vert x\vert^{-\tau})\\
			&\quad+\mu^{K}\mu^{L}\vert x\vert^{2}\left((g_{\mathbb{S}^{n-1}})_{KL}+o(\vert x\vert^{-\tau})\right)+(1+\lambda)\mu^{L}\cdot o(\vert x\vert^{-\tau+1})+o(\vert x\vert^{-2\tau})
		\end{align*}
		as $\vert x\vert\to\infty$. We rewrite the first equation as 
		\begin{align}\label{temp}
			\mu^{L}&=\lambda\cdot o(\vert x\vert^{-(\tau+1)})+o(\vert x\vert^{-(2\tau+1)})
		\end{align}
		and plug this into the second equation, obtaining $1=(1+\lambda)^{2}+o(r^{-\tau})+\lambda\cdot o(r^{-\tau})+\lambda^{2}\cdot o(r^{-\tau})$ and hence by Taylor's formula, this quadratic equation has the two solutions $\lambda_{1}=o(\vert x\vert^{-\tau})$ and $\lambda_{2}=-2+o(\vert x\vert^{-\tau})$ as $\vert x\vert\to\infty$. As we are interested in finding the normal pointing towards $\vert x\vert\to\infty$, we can exclude $\lambda_{2}$ and obtain $\lambda=o(\vert x\vert^{-\tau})$ as desired. Combining this with~\eqref{temp}, we find $\mu^{L}=o(\vert x\vert^{-(2\tau+1)})$ for $L=1,\dots,n-1$ as $\vert x\vert\to\infty$. This proves~\eqref{asyeta}.
		
		For \eqref{asyH}, we compute as above that the components of the inverse induced metric $(\sigma^{IJ})$ on $x^{-1}(\partial B_{r})$ satisfy $\sigma^{IJ}=\frac{1}{\vert x\vert^{2}}(g_{\mathbb{S}^{n-1}})^{IJ}+o(\vert x\vert^{-\tau-2})$ as $\vert x\vert\to\infty$, while the components of the inverse metric satisfy $g^{rr}=1+o_{2}(\vert x\vert^{-\tau})$, $g^{rI}=o_{2}(\vert x\vert^{-\tau-1})$, $g^{IJ}=\frac{1}{\vert x\vert^{2}}(g_{\mathbb{S}^{n-1}})^{IJ}+o_{2}(\vert x\vert^{-\tau-2})$ as $\vert x\vert\to\infty$. From this, one finds that the Christoffel symbols of $g$ behave as
		\begin{align*}
			\Gamma_{IJ}^{r}&=-\vert x\vert(g_{\mathbb{S}^{n-1}})_{IJ}+o(\vert x\vert^{1-\tau}),\\
			\Gamma_{IJ}^{K}&=o(\vert x\vert^{-\tau})
		\end{align*}
		as $\vert x\vert\to\infty$ and thus, using \eqref{asyeta}, we obtain
		\begin{align*}
			\HH&=-\sigma^{IJ}g(\nabla_{I}\partial_{J},\nu)=\frac{n-1}{\vert x\vert}+o(\vert x\vert ^{-1-\tau})
		\end{align*}
		as $\vert x\vert\to\infty$ as claimed. Next, for~\eqref{asyvolint}, we note that
		\begin{align*}
			\sqrt{\deter\left(g_{ij}\right)}&=\sqrt{\deter\left(\delta_{ij}+o(\vert x\vert^{-\tau})\right)}=1+o(\vert x\vert^{-\tau})
		\end{align*}
		as $\vert x\vert\to\infty$ by Taylor's formula. 
		
Hence
		\begin{align*}
			\int_{x^{-1}(\R^{n}\setminus\overline{B_{r}})}v\,dV&=\int_{\R^{n}\setminus\overline{B_{r}}}(v\circ x^{-1})\sqrt{\deter\left(g_{ij}\right)}\,dx^{1}\cdot\dots\cdot dx^n\\
			&=\int_{\R^{n}\setminus\overline{B_{r}}}(v\circ x^{-1})\left(1+o(\vert x\vert^{-\tau})\right)dx^{1}\cdot\dots\cdot dx^n\\
			&=\int_{\R^{n}\setminus\overline{B_{r}}}(v_{0}\circ x^{-1})\left(1+o(\vert x\vert^{-\tau})\right)dV_{\delta}+\int_{\R^{n}\setminus\overline{B_{r}}} o(\vert x\vert^{-n})\,dV_{\delta}\\
			\phantom{\int_{x^{-1}(\R^{n}\setminus\overline{B_{r}})}v\,dV}&=\int_{\R^{n}\setminus\overline{B_{r}}}(v_{0}\circ x^{-1})\,dV_{\delta}+\int_{\R^{n}\setminus\overline{B_{r}}} o(\vert x\vert^{-n})\,dV_{\delta}=\int_{\R^{n}\setminus\overline{B_{r}}}(v_{0}\circ x^{-1})dV_{\delta}+o(1)
		\end{align*}
		as $\vert x\vert\to\infty$, where we have used the decay assumption on $v$ and $v_{0}$ in the third and second to last, and the $L^{\infty}$-$L^{1}$-H\"older inequality in the last step.
		
		Finally, for~\eqref{asyint}, we argue as before and compute 
		\begin{align*}
			\sqrt{\deter\left(g_{IJ}\right)}&=\sqrt{\deter\left(r^{2}(g_{\mathbb{S}^{n-1}})_{IJ}+o(\vert x\vert^{-\tau+2})\right)}\\
			&=r^{n-1}\sqrt{\deter\left((g_{\mathbb{S}^{n-1}})_{IJ}+o(\vert x\vert^{-\tau})\right)}\\
			&=r^{n-1}\sqrt{\deter\left((g_{\mathbb{S}^{n-1}})_{IJ}\right)}\sqrt{\deter\left(\delta_{KL}+((g_{\mathbb{S}^{n-1}})^{-1})_{KL}\cdot o(\vert x\vert^{-\tau})\right)}\\
			&=r^{n-1}\sqrt{\deter\left((g_{\mathbb{S}^{n-1}})_{IJ}\right)}\left(1+o(\vert x\vert^{-\tau})\right)
		\end{align*}
		as $\vert x\vert\to\infty$ by the algebraic properties of the determinant and by Taylor's formula. Arguing as before and using the decay assumption on $u$, this implies
		\begin{align*}
			\int_{x^{-1}(\partial B_{r})}u\,dS&=\int_{\partial B_{r}}(u\circ x^{-1})\sqrt{\deter\left(g_{IJ}\right)}\,d\theta^{1}\cdot\dots\cdot d\theta^{n-1}\\
			&=\int_{\partial B_{r}}(u\circ x^{-1})\,r^{n-1}\sqrt{\deter\left((g_{\mathbb{S}^{n-1}})_{IJ}\right)}\left(1+o(\vert x\vert^{-\tau})\right)d\theta^{1}\cdot\dots\cdot d\theta^{n-1}\\
			&=\int_{\partial B_{r}}(u\circ x^{-1})\left(1+o(\vert x\vert^{-\tau})\right)dS_{\delta}\\
			&=\int_{\partial B_{r}}(u_{0}\circ x^{-1})\left(1+o(\vert x\vert^{-\tau})\right)dS_{\delta}+\int_{\partial B_{r}} o(\vert x\vert^{-(n-1)})\,dS_{\delta}\\
			&=\int_{\partial B_{r}}(u_{0}\circ x^{-1})\,dS_{\delta}+o(1)
		\end{align*}
		as $\vert x\vert\to\infty$. This completes the proof.
	\end{proof}

	\begin{remark}[Choice of normal, regular boundary, tensor norm]\label{rmk:lapse-asymp}
		Let $(M^{n},g,f)$, $n\geq3$, be an asymptotically flat static vacuum system of mass $m$ and decay rate $\tau\geq0$ with connected boundary $\partial M$. Let $\nu$ denote the unit normal to $\partial M$ pointing towards the asymptotically flat end. Now assume first that $f\vert_{\partial M}=f_{0}$ for some $f_0 \in [0, 1)$. Then since $f$ is harmonic by \eqref{vacuum equation2}, the maximum principle\footnote{Indeed, the maximum principle applies under our weak asymptotic flatness conditions from \Cref{def2} which can be seen as follows: Suppose that $\{f\geq1\}\neq\emptyset$. Since $f=f_{0}$ on $\partial M$, $f\to 1$ at infinity, $f$ is continuous, and $M$ is metrically complete up $\partial M$ by \Cref{rem:complete}, $f$ must have a positive maximum at a point $q_0\in M\setminus\partial M$, with $f(q_{0})\geq1$. Now let $U\subset M\setminus\partial M$ be an open neighborhood of $q_{0}$ with smooth boundary $\partial U$, large enough to contain some $q\in U$ with $f(q)<f(q_{0})$; such a neighborhood exists because $f=f_{0}<1$ on $\partial M$. Applying the strong maximum principle to $f\vert_{U}$ gives a contradiction. The possibility that $\{f\leq f_{0}\}\neq\emptyset$ can be handled analogously.} ensures that 
		\begin{align}\label{range_f}
			0 \leq f_0 < f < 1
		\end{align}
		holds on $M$. Moreover, by the Hopf lemma\footnote{Similarly modified as the maximum principle argument to allow for non-compact $M$.}, we can deduce that $\nu(f)=\|\nabla f\| > 0$ on $\partial M$, implying that $\partial M$ is a regular level set of $f$. Thus
		\begin{align}\label{normal}
			\nu &= \frac{\nabla f }{\|\nabla f\|},
		\end{align}
		where here and in what follows, $\|\cdot\|$ denotes the tensor norm induced by $g$ and we slightly abuse notation and denote the gradient of $f$ by $\nabla f$. Next assume  that $f\vert_{\partial M}=f_{0}$ for some $f_0>1$. The same arguments imply that
		\begin{align}\label{range_f1}
			f_0 > f > 1
		\end{align}
		holds on $M$ and 
		\begin{align}\label{normal1}
			\nu &= -\frac{\nabla f }{\|\nabla f\|}.
		\end{align}
		When studying (regular) level sets $\{f=f_{0}\}$ of $f$, we will also use the unit normal $\nu$ pointing towards infinity, so that \eqref{normal} respectively \eqref{normal1} hold when $f_{0}\in(0,1)$ respectively $f_{0}\in(1,\infty)$. Finally, assume that $f\vert_{\partial M}=1$. Then  by the maximum principle, $f\equiv1$ holds on $M$. 
	\end{remark}
	
	\subsection{Static horizons and equipotential photon surfaces}\label{sec:photo}
	Static (black hole) horizons and their surface gravity are defined as follows. For simplicity, we will restrict our attention to connected static horizons already here.
	
	\begin{definition}[Static horizons]
		Let $(M^{n},g,f)$, $n\geq3$, be a static system with connected boundary $\partial M$. We say that $\partial M$ is a \emph{static (black hole) horizon} if $f\vert_{\partial M}=0$.
	\end{definition}
	
	In fact, static horizons as defined above can be seen to be Killing horizons in the sense that the static Killing vector field $\partial_{t}$ smoothly extends to the (extension to the) boundary of the static spacetime $(\R\times M,\overline{g}=-f^{2}dt^{2}+g)$ but at the same time degenerates along this boundary, namely $-f^{2}=\overline{g}(\partial_{t},\partial_{t})\to0$. The standard example of a static system with a static horizon is the Schwarz\-schild system $(M^{n}_{m},g_{m},f_{m})$ of mass $m>0$.
	
	Let us now collect some important properties of static horizons in static vacuum systems.
	
	\begin{remark}[Surface gravity, horizons are totally geodesic]\label{rem:geodesic}
		It is a well-known and straightforward consequence of~\eqref{vacuum equation1} that static horizons in static vacuum systems are totally geodesic and in particular minimal surfaces. Moreover, using again~\eqref{vacuum equation1}, one computes that
		\begin{align}\label{eq002}
			\nabla\|\nabla f\|^{2}&=2f\Ric(\nabla f,\cdot)
		\end{align}
		which manifestly vanishes on a static horizon $\partial M$. This implies that the \emph{surface gravity} $\kappa$ defined by
		\begin{align}\label{kappa}
			\kappa\definedas \nu(f)\vert_{\partial M}
		\end{align}
		for some unit normal along $\partial M$ is constant on the static horizon $\partial M$. Combined with \Cref{rmk:lapse-asymp}, this shows that the surface gravity of a (connected) static horizon in an asymptotically flat static vacuum system is necessarily non-vanishing, $\kappa\neq0$ and positive when one chooses $\nu$ to point to infinity. This fact is sometimes expressed as saying that such static horizons are ``non-degenerate''.
	\end{remark}
	
	Next, let us recall the definition and properties of equipotential photon surfaces and of photon spheres, the central objects studied in \Cref{teoclassifica2}. We will be very brief as we will only need specific properties and refer the interested reader to~\cite{CedGalSurface,CJV} for more information and references. In particular, we will assume that all photon surfaces are necessarily connected for simplicity of the exposition and as we will only study connected photon surfaces in this paper, anyway. It will temporarily be more convenient to think about static spacetimes rather than static systems.
	\begin{definition}[(Equipotential) photon surface, photon sphere]
		A smooth, timelike, embedded, and connected hypersurface in a smooth Lorentzian manifold is called a \emph{photon surface} if it is totally umbilic. A photon surface $P^{n}$ in a static spacetime $(\R\times M^{n},\overline{g}=-f^{2}dt^{2}+g)$ is called \emph{equipotential} if the lapse function $f$ of the spacetime is constant along each connected component of each \emph{time-slice} $\Sigma^{n-1}(t)\definedas P^{n}\,\cap \left(\{t\}\times M^{n}\right)$ of the photon surface. An equipotential photon surface is called a \emph{photon sphere} if the lapse function $f$ is constant (in space and time) on $P^{n}$.
	\end{definition}
	It is well-known that the (exterior) Schwarz\-schild spacetime of mass $m>0$ (i.e., the spacetime associated to the Schwarzschild system $(M_{m}^{n},g_{m},f_{m})$ of mass $m>0$) possesses a photon sphere at $r=(nm)^{\frac{1}{n-2}}$. Moreover, it follows from a combination of results by Cederbaum and Galloway~\cite[Theorem 3.5, Proposition 3.18]{CedGalSurface} and by Cederbaum, Jahns, and Vi\v{c}\'anek Mart\'inez~\cite[Theorems 3.7, 3.9, and 3.10]{CJV} that all Schwarz\-schild spacetimes possess very many equipotential photon surfaces. In particular, every sphere $\mathbb{S}^{n-1}(r)\subset M^{n}_{m}$ arises as a time-slice of an equipotential photon surface. On the other hand, no other closed hypersurfaces of $(M^{n}_{m},g_{m}, f_{m})$ arise as time-slices of equipotential photon surfaces by \cite[Corollary 3.9]{CedGalSurface}.
	
	Let us now move on to study the intrinsic and extrinsic geometry of time-slices of equipotential photon spheres. Time-slices of equipotential photon surfaces and in particular of photon spheres have the following useful properties.
	\begin{proposition}[{\cite[Proposition 5.5]{CJV}}]\label{prop:photo}
		Let $(M^{n},g,f)$, $n\geq3$, be an asymptotically flat static vacuum system and let $\partial M$ be a time-slice of an equipotential photon surface with $f=f_{0}$ on $\partial M$ for some constant $f_{0}>0$, $f_{0}\neq1$. Then $\partial M$ is totally umbilic in $(M,g)$, has constant scalar curvature $\scal_{\partial M}$, constant mean curvature $\HH$, and constant $\kappa\definedas\nu(f)\vert_{\partial M}$, related by the \emph{equipotential photon surface constraint}
		\begin{align}\label{eq:photo}
			\scal_{\partial M}&=\frac{2\kappa\HH }{f_{0}}+\frac{n-2}{n-1}\HH^{2}.
		\end{align}
	\end{proposition}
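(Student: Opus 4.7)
My plan is to first observe that $\partial M$ is a regular level set of the lapse $f$, then to transfer the total umbilicity of the ambient equipotential photon surface $P^n$ from spacetime to the spatial slice $(M,g)$, and finally to apply the Gauss equation together with the static vacuum equations and the harmonicity of $f$ to establish the asserted photon surface constraint.

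Since $f_0 \in (0,1) \cup (1,\infty)$ and $f$ is harmonic by \eqref{vacuum equation2}, the Hopf lemma (cf.\ \Cref{rmk:lapse-asymp}) yields $\Vert\nabla f\Vert > 0$ on $\partial M$; hence $\partial M$ is a regular level set of $f$ with spatial unit normal $\nu = \pm \nabla f/\Vert\nabla f\Vert$ and $\kappa = \nu(f)$ pointwise well-defined. To transfer total umbilicity to $(M,g)$, I would decompose the tangent space of $P$ at each point $p \in \partial M$ as $T_p P = T_p\partial M \oplus \mathrm{span}(\partial_t + \alpha\,\nu)$ for some $\alpha(p) \in \R$; the equipotential assumption forces the transverse-to-time-slice direction in $P$ to preserve $f$, so its spatial component must be parallel to $\nabla f$. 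Consequently the unit spacelike spacetime normal decomposes as $\bar\nu = a\,\partial_t + b\,\nu$ with $b \neq 0$. Using that $\bar\nabla_X Y = \nabla_X Y$ for spatial $X, Y \in T_p\partial M$ in the warped product $\bar g = -f^2 dt^2 + g$, restricting the umbilicity identity $\bar h = \lambda \bar g|_P$ to $T\partial M \times T\partial M$ yields $h = (\lambda/b)\, g_{\partial M}$, proving total umbilicity of $\partial M$ in $(M, g)$.

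For the constancy of $\HH$, $\kappa$, and $\scal_{\partial M}$ on $\partial M$, the analysis in \cite{CJV} shows that $\lambda$ and $b$ are constant along $\partial M$, by imposing $\bar h = \lambda \bar g|_P$ on mixed tangent directions and exploiting the static structure. Alternatively, constancy of $\HH$ (and hence of $\kappa$) can be extracted from the Codazzi equation for the umbilic $h$ together with the identity $\Ric(X, \nu) = f_0^{-1} X(\kappa)$ on $\partial M$ for tangent $X$, which follows from $\nabla^2 f = f \Ric$ and the observation that $\nabla_X \nu$ is tangent to $\partial M$ and hence annihilates $f|_{\partial M} = f_0$.

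Finally, for the constraint itself: from $\Delta f = 0$, $\nabla f = \kappa\, \nu$ on $\partial M$, and $\diver(\nu)|_{\partial M} = \HH$, I obtain $\nu(\Vert\nabla f\Vert)|_{\partial M} = -\kappa \HH$. Using $\nabla^2 f = f \Ric$ together with $(\nabla_\nu \nu) f|_{\partial M} = 0$ (since $\nabla_\nu \nu \perp \nu$ is tangent to $\partial M$) yields $\Ric(\nu, \nu)|_{\partial M} = -\kappa \HH / f_0$. Inserting into the Gauss equation for $\partial M \hookrightarrow (M, g)$ with $\scal = 0$ (cf.\ \eqref{scal0}) and $\Vert h\Vert^2 = \HH^2/(n-1)$ from umbilicity gives
\[
\scal_{\partial M} = -2\Ric(\nu, \nu) + \HH^2 - \Vert h\Vert^2 = \frac{2\kappa \HH}{f_0} + \frac{n-2}{n-1}\, \HH^2,
\]
as asserted. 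The main obstacle is the umbilicity transfer in the second step, since equipotential photon surfaces need not be $\partial_t$-invariant in spacetime, requiring a careful tangent-normal decomposition at the time-slice; the constancy of the umbilic factor $\lambda$ along $\partial M$ also requires additional structural work.
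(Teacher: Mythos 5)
The paper does not actually prove this proposition --- it imports it verbatim from \cite[Proposition 5.5]{CJV} --- so there is no in-paper argument to compare against; your attempt has to stand on its own. Most of it does: the Hopf-lemma regularity of $\partial M$, the transfer of umbilicity via the decomposition $T_pP=T_p\partial M\oplus\operatorname{span}(\partial_t+\alpha\,\nu)$, the observation that $\bar\nu=a\,\partial_t+b\,\nu$ with $b\neq0$ and $\bar\nabla_XY=\nabla_XY$ for spatial $X,Y$ in the warped product, and the final derivation of \eqref{eq:photo} from $\Delta f=0$, $\nabla^2f=f\Ric$, $\scal=0$, and the twice-contracted Gau{\ss} equation with $\Vert h\Vert^2=\HH^2/(n-1)$ are all correct (and consistent with \Cref{unification}).

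The gap is in the constancy step. Your self-contained alternative --- the traced Codazzi equation for $\partial M\hookrightarrow(M,g)$ combined with $\Ric(X,\nu)=f_0^{-1}X(\kappa)$ --- yields, for umbilic $h=\tfrac{\HH}{n-1}g_{\partial M}$, only the single linear relation $\tfrac{n-2}{n-1}X(\HH)=\pm f_0^{-1}X(\kappa)$ between the two tangential derivatives, and one relation between two unknown gradients cannot force either to vanish. The missing second input has to come from the spacetime, and it is not optional: since $\overline{\Ric}=0$, the traced Codazzi equation of the totally umbilic $P$ gives $(n-1)X(\lambda)=\overline{\Ric}(X,\bar\nu)=0$, so the umbilicity factor $\lambda$ is constant along $P$; and the vanishing of the mixed component $\bar h(X,\partial_t+\alpha\nu)=\lambda\,\bar g(X,\partial_t+\alpha\nu)=0$ for $X\in T_p\partial M$ computes to $b\,X(\alpha)=0$, so $\alpha$ and hence $b=(1-\alpha^2/f_0^2)^{-1/2}$ are constant on $\partial M$. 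Only then is $\HH=(n-1)\lambda/b$ constant, after which your Codazzi relation delivers $\kappa$ constant and the Gau{\ss} equation delivers $\scal_{\partial M}$ constant. You name exactly these ingredients (``mixed tangent directions'', ``constancy of $\lambda$ requires additional structural work'') but defer them to \cite{CJV}, which is essentially a citation of the result being proved; supplying the two short computations above is what closes the argument.
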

	Here, we are using that $\kappa=\nu(f)\vert_{\partial M}\neq0$ by \Cref{rmk:lapse-asymp}.
	
	\begin{proposition}[{\cite[Lemma 2.6]{carlagregpmt}, \cite[Theorem 5.22]{CJV}}]\label{prop:signH}
		In the setting of \Cref{prop:photo}, we have $\HH>0$.
	\end{proposition}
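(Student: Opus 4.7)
The plan is to derive $\HH > 0$ by applying the Hopf boundary-point lemma to $u \definedas \|\nabla f\|^2$ and then converting the strict sign of the outward normal derivative of $u$ at $\partial M$ into the desired sign of $\HH$ via the static vacuum equations. The key observation is that $u$ satisfies a linear elliptic differential inequality with precisely the right boundary values and asymptotics for the maximum principle machinery to apply.

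First, combining the Bochner identity with the static vacuum equations \eqref{vacuum equation1}--\eqref{vacuum equation2} and the identity \eqref{eq002}, I will verify that on the interior of $M$ one has
\begin{equation*}
Lu \definedas \Delta u - \frac{g(\nabla u, \nabla f)}{f} = 2 f^{2} \|\Ric\|^{2} \geq 0,
\end{equation*}
so that $u$ is an $L$-subsolution for an elliptic operator $L$ with no zeroth-order term. By the boundary assumption, $u \equiv \kappa^{2}$ on $\partial M$ is a positive constant (with $\kappa \neq 0$ by \Cref{rmk:lapse-asymp}), while \Cref{lem:asymptotics} yields $u \to 0$ as $|x|\to\infty$. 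Exhausting $M$ by $\Omega_{R} \definedas M \cap B_{R}$ for large $R$, the weak maximum principle forces the maximum of $u$ on $\overline{\Omega_R}$ to be attained on $\partial M$; the strong maximum principle then rules out an interior point where $u = \kappa^{2}$ (such a point would force $u \equiv \kappa^{2}$ on the connected manifold $M$, contradicting $u \to 0$).

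With $u < \kappa^{2}$ strictly in the interior, the Hopf boundary-point lemma at any $q \in \partial M$ produces $\nu(u)|_{q} < 0$. I then compute $\nu(u)$ directly from \eqref{eq002}: on $\partial M$, $\nabla u = 2 f\, \Ric(\nabla f, \cdot)^{\sharp}$ and $\nabla f = \kappa \nu$, so $\nu(u) = 2 f_{0} \kappa\, \Ric(\nu, \nu)$. Tangentially decomposing $\Delta f = 0$ along $\partial M$—using that $f \equiv f_{0}$ on $\partial M$ implies $\Delta_{\partial M} f = 0$—and applying \eqref{vacuum equation1} in the normal direction gives $f_{0}\, \Ric(\nu, \nu) + \HH \kappa = 0$, so $\nu(u)|_{\partial M} = -2 \kappa^{2}\, \HH$. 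Combining the Hopf sign with this identity produces $\HH > 0$, since $\kappa \neq 0$.

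The hardest part is checking that the maximum principle and Hopf lemma apply cleanly under the weak asymptotic flatness assumptions of \Cref{def2}; both should follow from a careful use of the exhaustion $\Omega_R$ together with the asymptotic decay of $u$ furnished by \Cref{lem:asymptotics}, without need for additional regularity or decay beyond what is already assumed. Notably, this argument does not invoke the umbilicity or constant-scalar-curvature content of \Cref{prop:photo}; it uses only the harmonicity of $f$, the constancy of $f$ on $\partial M$, and the static vacuum equations.
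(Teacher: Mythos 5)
Your proof is correct, and it takes a genuinely different route from the paper: the paper does not prove \Cref{prop:signH} internally at all, but cites \cite[Lemma 2.6]{carlagregpmt} and \cite[Theorem 5.22]{CJV} and merely explains why their extra hypotheses (stronger decay, outward-directedness $\nu(f)>0$ resp.\ $\HH\,\nu(f)>0$, which are only needed for disconnected boundaries) can be dropped, using that large coordinate spheres have positive mean curvature by \Cref{lem:asymptotics}. Your argument is instead self-contained within the paper's framework: the elliptic inequality $\Delta u - f^{-1}\langle\nabla u,\nabla f\rangle = 2f^{2}\Vert\Ric\Vert^{2}\geq0$ for $u=\Vert\nabla f\Vert^{2}$ is exactly the identity derived in the proof of \Cref{lem21}, the coefficient $-f^{-1}\nabla f$ is bounded because $f\geq\min\{f_{0},1\}>0$ throughout $M$ in the photon-surface setting, the exhaustion plus decay of $u$ from \Cref{lem:asymptotics} makes the weak and strong maximum principles and the Hopf lemma apply cleanly under the paper's weak asymptotics, and the boundary computation $\nu(u)=2f_{0}\kappa\Ric(\nu,\nu)=-2\kappa^{2}\HH$ (via $\Delta_{\partial M}f=0$, $\nabla^{2}f(\nu,\nu)=f_{0}\Ric(\nu,\nu)=-\HH\kappa$, and $\nabla f=\kappa\nu$) is consistent with the paper's sign conventions in both cases $f_{0}\in(0,1)$ and $f_{0}\in(1,\infty)$. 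This buys a proof that does not rely on external results and works verbatim at decay rate $\tau=0$. One small correction to your closing remark: the argument does use more than harmonicity, constancy of $f$ on $\partial M$, and the vacuum equations — you need $u\vert_{\partial M}$ to be constant, i.e.\ the constancy of $\kappa$ on $\partial M$ (or, alternatively, the constancy of $\HH$, so that positivity at the single Hopf point propagates); both are supplied by \Cref{prop:photo}, so the proof is complete as stated, but the claimed minimality of hypotheses is slightly overstated.
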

	In fact, both \cite[Lemma 2.6]{carlagregpmt} and \cite[Theorem 5.22]{CJV} assume stronger asymptotic decay than we do, and in addition assume $\nu(N)>0$ resp. $\HH\nu(N)>0$ on $\partial M$. As $\partial M$ is connected here, neither of the second assumptions are needed to conclude as can be seen in the corresponding proofs, as these assumptions are only needed to handle potential other boundary componentss. Concerning the asymptotic decay, it suffices to note that our decay assumptions imply that large coordinate spheres have positive mean curvature by \Cref{lem:asymptotics}.
	
	\begin{remark}\label{unification}
		Formally taking the limit of the equipotential photon surface constraint~\eqref{eq:photo} as $f_{0}\searrow0$, one recovers the twice contracted Gau{\ss} equation
		\begin{align*}
			\scal_{\partial M}&=-\frac{2\Ric(\nu,\nu)\kappa}{\Vert\nabla f\Vert}=-2\Ric(\nu,\nu),
		\end{align*}
		with $\kappa$ denoting the surface gravity of the static horizon $\{f_{0}=0\}$. To see this, one uses the well-known fact that 
		$\HH=-\frac{\nabla^{2}f(\nu,\nu)}{\Vert\nabla f\Vert}$ on regular level sets of $f$ (for $0<f<1$), \eqref{vacuum equation1}, and \eqref{scal0}. In particular, the first term $\frac{2\kappa H}{f_{0}}$ of \eqref{eq:photo} remains well-defined in the case $f_{0}=0$.
	\end{remark}
	
	\begin{lemma}[Smarr formula]\label{Smarr}
		Let $(M^{n},g,f)$, $n\geq3$, be an asymptotically flat static vacuum system with mass $m\in\R$. Then the \emph{Smarr formula}
		\begin{align}\label{eq:Smarr}
			\int_{\{f=z\}} \nu(f)\,dS &= (n-2)\vert\mathbb{S}^{n-1}\vert\, m
		\end{align}
		holds for every regular, connected level set $\{f=z\}$ of $f$, where $z \geq 0$ is a constant. Here, $\vert\mathbb{S}^{n-1}\vert$ denotes the area of $(\mathbb{S}^{n-1},g_{\mathbb{S}^{n-1}})$ and $\nu$ denotes the unit normal to $\{f=z\}$. Moreover,  if $(M,g,f)$ has a connected boundary $\partial M$ then
		\begin{align}\label{eq:Smarr2}
			\int_{\partial M} \nu(f)\,dS &= (n-2)\vert\mathbb{S}^{n-1}\vert\, m.
		\end{align}
		Furthermore, if in addition $f\vert_{\partial M}=f_{0}$ for some $f_0\geq0$ then $m>0$ when $f_{0}\in[0,1)$, $m=0$ when $f_{0}=1$, and $m<0$ when $f_{0}>1$. In particular, if $\partial M$ is a static horizon or a time-slice of an equipotential photon surface with $f_{0}<1$ resp. $f_{0}>1$ then $m>0$ resp. $m<0$.
	\end{lemma}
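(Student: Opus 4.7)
The unifying strategy is to apply the divergence theorem to the harmonic function $f$ (harmonic by~\eqref{vacuum equation2}) on a region bounded on the inside by the level set $\{f=z\}$ respectively $\partial M$ and on the outside by a large coordinate sphere, then extract the mass from the asymptotics of $\nabla f$ provided by \Cref{lem:asymptotics}.

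For \eqref{eq:Smarr}, let $\{f=z\}$ be a regular connected level set with $z\geq 0$. Since $f\to 1$ at infinity and $\{f=z\}$ is connected, for $r$ sufficiently large the level set bounds together with $S_{r}\definedas x^{-1}(\partial B_{r})$ a region $\Omega_{r}\subset M$ lying in the asymptotic end (explicitly, the asymptotic component of $\{f>z\}$ if $z<1$ or of $\{f<z\}$ if $z>1$, intersected with $x^{-1}(\overline{B_{r}})$). On $\Omega_{r}$, $\Delta f=0$, and the divergence theorem gives
\begin{align*}
0 \;=\; \int_{\Omega_{r}} \Delta f \, dV \;=\; \int_{S_{r}}\nu(f)\,dS \;-\; \int_{\{f=z\}} \nu(f)\,dS,
\end{align*}
where in both boundary integrals $\nu$ points towards the asymptotic end. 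From \Cref{lem:asymptotics} one has $\nu(f) = \frac{(n-2)\,m}{\vert x\vert^{n-1}} + o(\vert x\vert^{-(n-1)})$ on $S_{r}$, and combining this with the surface integral asymptotics~\eqref{asyint} yields
\begin{align*}
\lim_{r\to\infty}\int_{S_{r}} \nu(f)\,dS \;=\; (n-2)\vert\mathbb{S}^{n-1}\vert\,m,
\end{align*}
which proves \eqref{eq:Smarr}. Identity~\eqref{eq:Smarr2} is obtained by the same argument applied to the region bounded by $\partial M$ and $S_{r}$.

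For the sign of $m$, I appeal to \Cref{rmk:lapse-asymp}. If $f_{0}\in[0,1)$, then $0\leq f_{0}<f<1$ on $M$, and the Hopf lemma gives $\nu(f)>0$ on $\partial M$ with $\nu$ pointing towards infinity, so~\eqref{eq:Smarr2} forces $m>0$. If $f_{0}=1$, the maximum principle forces $f\equiv 1$, hence $\nabla f\equiv 0$ and $m=0$. If $f_{0}>1$, then $1<f<f_{0}$ on $M$, so $f$ attains its maximum on $\partial M$ and Hopf gives $\nu(f)<0$, whence $m<0$. The final assertions on static horizons and equipotential photon surfaces are then immediate from their respective values of $f_{0}$ ($f_{0}=0\in[0,1)$ in the horizon case; $f_{0}\in(0,1)\cup(1,\infty)$ in the equipotential photon surface case).

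The main technical point is the rigorous construction of $\Omega_{r}$: connectedness of $\{f=z\}$ (or of $\partial M$) together with the limiting behavior $f\to 1$ at infinity and the completeness statement of \Cref{rem:complete} ensures that $\Omega_{r}$ is unambiguously defined and that no additional components of the level set or of the boundary appear in its boundary for $r$ large. Once $\Omega_{r}$ is set up, the remaining work is a careful bookkeeping of the decay rates collected in \Cref{lem:asymptotics} to justify passing to the limit $r\to\infty$ in the surface integral over $S_{r}$.
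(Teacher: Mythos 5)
Your proposal is correct and follows essentially the same route as the paper's proof: the divergence theorem applied to the harmonic function $f$ on the region between the inner hypersurface and a large coordinate sphere, the asymptotics $\nu(f)=\tfrac{(n-2)m}{|x|^{n-1}}+o(|x|^{-(n-1)})$ from \Cref{lem:asymptotics} together with \eqref{asyint} to evaluate the outer flux, and \Cref{rmk:lapse-asymp} (maximum principle and Hopf lemma) for the sign of $m$. No substantive differences.
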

	
	\begin{remark}[Quasi-local mass, outward directed equipotential photon surfaces, and why we avoid the zero mass case]\label{rem:Smarrmass}
		The Smarr formula \eqref{eq:Smarr2} allows one to define a quasi-local mass for $\partial M$ by expressing $m$ in terms of the other quantities in \eqref{eq:Smarr2} (see e.g.~\cite{CDiss}). \Cref{Smarr} hence states that said quasi-local mass of a connected boundary $\partial M$ coincides with the asymptotic mass parameter $m$ of the static system. Furthermore, it informs us that if $f=f_{0}$ on $\partial M$ for some constant $f_{0}\geq0$,  the sign/vanishing of the mass $m$ is fixed by the value of $f_{0}$. This allows to refer to the case $f_{0}\in[0,1)$ as the \emph{positive mass case}, to $f_{0}=0$ as the \emph{zero mass case}, and to the case $f_{0}\in(1,\infty)$ as the \emph{negative mass case}, respectively. It also explains why we avoid the zero mass case in this paper altogether: If $f_{0}=1$, \Cref{rmk:lapse-asymp} informs us that $f\equiv1$ on $M$ and thus $(M,g)$ is necessarily Ricci-flat by \eqref{vacuum equation1}. In dimension $n=3$, this implies that $(M,g)$ is indeed flat; one can conclude that it isometric to Euclidean space without a ball using the asymptotically flatness with decay rate $\tau \geq 0$, without assuming any additional properties (see \cite{CCF}). In higher dimensions, proving a similar statement is a problem of a different nature, which is going to be addressed elsewhere.
		
		As briefly touched upon in \Cref{sec:intro}, the existing static vacuum uniqueness results for equipotential photon surfaces all\footnote{With the exception of \cite{CCF} for $n=3$ and connected $\partial M$.} assume that those are \emph{outward directed}, meaning that $\kappa=\nu(f)\vert_{\partial M}>0$. In view of \Cref{Smarr}, this corresponds to a restriction to the positive (quasi-local) mass case.
	\end{remark}

	\begin{proof}[Proof of \Cref{Smarr}]
		The fact that the left-hand side of \eqref{eq:Smarr} is independent of the value of $z$ is a direct consequence of~\eqref{vacuum equation2} and the divergence theorem. To see that the constants on the right-hand sides of \eqref{eq:Smarr}, \eqref{eq:Smarr2} equal $(n-2)\vert\mathbb{S}^{n-1}\vert\, m$, one argues as follows, using the notation from \Cref{lem:asymptotics}. First, $\nu(f)=\frac{(n-2) m}{\vert x\vert^{n-1}}+o(\vert x\vert^{-(n-1)})$ as $\vert x\vert\to\infty$ by \Cref{lem:asymptotics} and \eqref{asyeta}. Hence $u\definedas\nu(f)$, $u_{0}\definedas \frac{(n-2) m}{\vert x\vert^{n-1}}$ are suitable functions for the application of \eqref{asyint}. Then, by \eqref{vacuum equation2} and the divergence theorem, we get
		\begin{align*}
			\int_{\partial M} \nu(f)\,dS &= -\int_{\{p\in M\,:\,\vert x\vert(p)<r\}}\Delta f\,dV+\int_{x^{-1}(\partial B_{r})}\nu(f)\,dS=\int_{x^{-1}(\partial B_{r})}\nu(f)\,dS\\
			&=\int_{\partial B_{r}} (u_{0}\circ x^{-1})\,dS_{\delta}+o(1)=(n-2)m\int_{\partial B_{r}}\frac{1}{\vert x\vert^{n-1}}\,dS_{\delta}+o(1)\\
			&=(n-2)\vert\mathbb{S}^{n-1}\vert\, m+o(1)
		\end{align*}
		as $r=\vert x\vert\to\infty$, where $dV$ denotes the volume element on $M$. This proves \eqref{eq:Smarr2}. In particular, if $f_{0}=1$, \Cref{rmk:lapse-asymp} tells us that $f=1$ on $M$ and hence there are no regular level sets of $f$ and no claim about \eqref{eq:Smarr}. The asymptotic formula for $\nu(f)=0$ directly shows that $m=0$. If $f_{0}\neq1$, regular level sets can exist and \eqref{eq:Smarr} then follows precisely as \eqref{eq:Smarr2}, up to a sign in front of the volume integral over $\triangle f$ if $z>1$, and with the domain of said volume integral taking the form $\{p\in M\,:\,f(p)>z,\, \vert x\vert(p)<r\}$ if $0\leq z<1$ and the form $\{p\in M\,:\,f(p)<z,\, \vert x\vert(p)<r\}$ if $z>1$ in view of \Cref{rmk:lapse-asymp}. The remaining claims are direct consequences of the Smarr formula and of \Cref{rmk:lapse-asymp}, via \Cref{prop:photo}.
	\end{proof}
	
	\begin{remark}[Admissible decay rates]\label{rem:admissibledecay}
		In \Cref{def2}, we have allowed the decay rate $\tau\geq0$ to be arbitrary. In the static vacuum setting, $\tau\geq n-2$ implies that $m=0$ via \Cref{Smarr}, arguing as in the proof of \Cref{lem:asymptotics}, hence our assumption \eqref{eq:f-asymp} effectively restrict the range of the decay rate to $\tau<n-2$. 
		\end{remark}

	\section{The $T$-tensor and its properties}\label{sec:Ttensor}
	In this section, we will discuss properties of the $T$-tensor introduced in \eqref{tensorT} which will be essential for establishing our results. We will also give a proof of the rigidity result \Cref{thm:rigidity}. Remember that, for a Riemannian manifold $(M^{n},\,g),$ $n\geq 3,$ the Weyl tensor $W$ is defined as
	\begin{align}\label{weyl}
		W&\definedas\Rm-\frac{1}{n-2}\left(\Ric-\frac{\scal}{2}g\right)\mathbin{\bigcirc\mspace{-15mu}\wedge\mspace{3mu}} g-\frac{\scal}{2n(n-1)}g\mathbin{\bigcirc\mspace{-15mu}\wedge\mspace{3mu}} g,
	\end{align}
	where $\Rm$ stands for the Riemann curvature operator of $(M,g)$, and $\mathbin{\bigcirc\mspace{-15mu}\wedge\mspace{3mu}}$ denotes the Kulkarni--Nomizu product. Moreover, the Cotton tensor $C$ of $(M,g)$ is given by
	\begin{align}
		\begin{split}\label{cotton}
			C(X,Y,Z)&\definedas(\nabla_{X}\Ric)(Y,Z)-(\nabla_{Y}\Ric)(X,Z)\\
			&\quad-\frac{1}{2(n-1)}\left((\nabla_{X}\scal) g(Y,Z)-(\nabla_{Y}\scal) g(X,Z)\right)
		\end{split}
	\end{align}
	for $X,Y,Z\in\Gamma(TM)$. It is well-known that $W$ vanishes for $n=3$, while for $n\geq4$, $W$ and $C$ are related via
	\begin{align}\label{WeylCotton}
		C&=-\frac{(n-2)}{(n-3)}(\nabla_{E_{i}}\!W)(\cdot,\cdot,\cdot,E_{j})\delta^{ij}
	\end{align}
	for any local orthonormal frame $\left\{E_{i}\right\}_{i=1}^{n}$ of $M$. 
	
	For $n=3$, it is well-known that the Cotton tensor detects (local) conformal flatness in the sense that $C=0$ if and only if $(M^{3},g)$ is locally conformally flat. The same holds true for the Weyl tensor when $n\geq4$. 
	
	The \emph{$T$-tensor} of a Riemannian manifold $(M^{n},g)$, $n\geq3$, carrying a smooth function $f\colon M\to\R$ is given by \eqref{tensorT}. Due to the symmetry of the Ricci tensor, $T$ is antisymmetric in its first two entries. By a straightforward algebraic computation, its squared norm can be computed to be
	\begin{align}\label{prop007}
		\|T\|^2&=\frac{2(n-1)}{(n-2)^{2}}\left[(n-1)\|\!\Ric\!\|^{2}\,\|\nabla f\|^{2}-n\|\Ric(\nabla f,\cdot)\|^{2}+2 \scal\Ric(\nabla f,\nabla f)\right].
	\end{align}
	In particular, if $(M^{n},g,f)$, $n\geq3$, is a static vacuum system, the last term in \eqref{prop007} vanishes by \eqref{scal0}. It is interesting to note the following relation between the Weyl, the Cotton, and the $T$-tensor.
	
	\begin{lemma}[Relation between $W$, $C$, and $T$]\label{lem20}
		Let $(M^n,g,f)$, $n\geq3$, be a static vacuum system. Then
		\begin{align}\label{eq:CTW}
			fC&=W(\cdot,\cdot,\cdot,\nabla f) + T
		\end{align}
		holds on $M$.
	\end{lemma}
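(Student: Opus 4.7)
The strategy is to differentiate the static vacuum equation $\nabla^{2}f=f\Ric$ once and then commute covariant derivatives so as to convert a third derivative of $f$ into a curvature contraction with $\nabla f$. Applying $\nabla_{X}$ to both sides yields $(\nabla_{X}\nabla^{2}f)(Y,Z)=(\nabla_{X}f)\Ric(Y,Z)+f(\nabla_{X}\Ric)(Y,Z)$ for all $X,Y,Z\in\Gamma(TM)$. Antisymmetrising in $X,Y$ produces $f$ times $(\nabla_{X}\Ric)(Y,Z)-(\nabla_{Y}\Ric)(X,Z)$ plus the non-derivative terms $(\nabla_{X}f)\Ric(Y,Z)-(\nabla_{Y}f)\Ric(X,Z)$; since $\scal=0$ by \eqref{scal0}, the first piece is precisely $fC(X,Y,Z)$ by the definition \eqref{cotton} of the Cotton tensor.

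On the other hand, the Ricci commutator identity applied to the one-form $df$ (equivalently, to the symmetric $(0,2)$-Hessian $\nabla^{2}f$) yields, with the appropriate sign convention,
\begin{equation*}
(\nabla_{X}\nabla^{2}f)(Y,Z)-(\nabla_{Y}\nabla^{2}f)(X,Z)=\Rm(X,Y,Z,\nabla f).
\end{equation*}
Equating the two computations for this antisymmetrised third derivative and rearranging produces
\begin{equation*}
fC(X,Y,Z)=\Rm(X,Y,Z,\nabla f)-(\nabla_{X}f)\,\Ric(Y,Z)+(\nabla_{Y}f)\,\Ric(X,Z).
\end{equation*}

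The final step is to split off the Weyl part. Since $\scal=0$, the decomposition \eqref{weyl} reduces to $\Rm=W+\tfrac{1}{n-2}\,\Ric\mathbin{\bigcirc\mspace{-15mu}\wedge\mspace{3mu}} g$. Expanding the Kulkarni--Nomizu product in the last slot,
\begin{equation*}
(\Ric\mathbin{\bigcirc\mspace{-15mu}\wedge\mspace{3mu}} g)(X,Y,Z,\nabla f)=\Ric(X,Z)\nabla_{Y}f+\Ric(Y,\nabla f)g(X,Z)-\Ric(X,\nabla f)g(Y,Z)-\Ric(Y,Z)\nabla_{X}f,
\end{equation*}
and combining with the $\pm\nabla f\cdot\Ric$ terms from the previous display, the coefficients $1+\tfrac{1}{n-2}=\tfrac{n-1}{n-2}$ in front of $\Ric(X,Z)\nabla_{Y}f-\Ric(Y,Z)\nabla_{X}f$ and $-\tfrac{1}{n-2}$ in front of $\Ric(X,\nabla f)g(Y,Z)-\Ric(Y,\nabla f)g(X,Z)$ emerge. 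These match the definition \eqref{tensorT} of $T$ exactly, giving $fC=W(\cdot,\cdot,\cdot,\nabla f)+T$.

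The main obstacle is the careful bookkeeping of sign conventions: the Ricci identity on covectors, the Kulkarni--Nomizu product, and the placement of $\nabla f$ in the Weyl tensor each carry ambiguities whose combination must be verified to yield the precise coefficients $\pm\tfrac{n-1}{n-2}$ and $\mp\tfrac{1}{n-2}$ appearing in \eqref{tensorT}. Beyond this, the derivation is a purely algebraic rearrangement once the static vacuum equation has been differentiated.
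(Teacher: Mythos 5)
Your proposal is correct and follows essentially the same route as the paper's proof: differentiate the static vacuum equation $\nabla^{2}f=f\Ric$, antisymmetrize and apply the commutator (Ricci) identity to produce $\Rm(\cdot,\cdot,\cdot,\nabla f)$, identify the Cotton tensor using $\scal=0$, and then split off the Weyl part via the Kulkarni--Nomizu decomposition, whose coefficients recombine exactly into $T$. The only difference is cosmetic (index-free versus abstract index notation).
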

	\begin{proof}
		For simplicity, we will use abstract index notation in this proof. First, taking the covariant derivative of \eqref{vacuum equation1}, we have
		\begin{align*}
			\nabla_{i}\nabla_{j}\nabla_{k}f&=\nabla_{i}f\Ric_{jk}+f\nabla_{i}\Ric_{jk}.
		\end{align*} 
		Next, from the Ricci equation we get that
		\begin{align*}
			\Ric_{jk}\nabla_{i}f-\Ric_{ik}\nabla_{j}f+f\left(\nabla_{i}\Ric_{jk}-\nabla_{j}\Ric_{ik}\right)&=\nabla_{i}\nabla_{j}\nabla_{k}f-\nabla_{j}\nabla_{i}\nabla_{k}f=\Rm_{ijkl}\nabla^{l}f.
		\end{align*} 
		By~\eqref{scal0}, we obtain from the definition of $C$ in \eqref{cotton} that
		\begin{align*}
			\Ric_{jk}\nabla_{i}f-\Ric_{ik}\nabla_{j}f+fC_{ijk}=\Rm_{ijkl} \nabla^{l}f.
		\end{align*} 
		Similarly, from the definition of the Weyl tensor in \eqref{weyl}, we obtain
		\begin{align*}
			\Rm_{ijkl}\nabla^{l}f=W_{ijkl}\nabla^{l}f+\frac{1}{n-2}\left(\Ric_{ik}\nabla_{j}f-\Ric_{jk}\nabla_{i}f+\Ric_{jl}\nabla^{l}fg_{ik}-\Ric_{il}\nabla^{l}fg_{jk}\right).
		\end{align*}
		Combining the last two equations gives the desired result.
	\end{proof}

	It is well-known that the Schwarz\-schild system $(M^{n}_{m},g_{m},f_{m})$ of mass $m$ can be rewritten in a manifestly conformally flat way by using the above-mentioned isotropic coordinates (this also applies in the negative mass case although not in a global isotropic coordinate chart). Hence its Weyl tensor $W_{m}$ vanishes for all $n\geq3$ and its Cotton tensor $C_{m}$ vanishes for $n=3$. From \eqref{WeylCotton}, we deduce that in fact its Cotton tensor $C_{m}$ and hence by \Cref{lem20} its $T$-tensor $T_{m}$ vanish for all $n\geq3$, that is $C_{m}=T_{m}=0$.
	
	We will later make use of the following lemma which relies on the idea of rewriting $T$ only in terms of $f$.
	\begin{lemma}[An identity for $\Vert T\Vert^{2}$]\label{lem21}
		Let $(M^n,g,f)$, $n\geq3$, be a static vacuum system. Then
		\begin{align*}
			\frac{(n-2)^{2}}{(n-1)}f^{2}\|T\|^{2} &= (n-1)\|\nabla f\|^{2}\left(\Delta\|\nabla f\|^{2} - \frac{\langle\nabla\|\nabla f\|^{2},\nabla f\rangle}{f}\right) - \frac{n}{2}\|\nabla\|\nabla f\|^{2}\|^{2}
		\end{align*}
		holds on $M$, where $\langle\cdot,\cdot\rangle$ denotes the metric $g$.
	\end{lemma}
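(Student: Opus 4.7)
The plan is to start from the already-derived expression \eqref{prop007} for $\|T\|^2$, use the static vacuum equations \eqref{vacuum equation1}, \eqref{vacuum equation2} together with \eqref{scal0} to rewrite everything in terms of $f$ and its derivatives, and then invoke Bochner's formula to convert $\|\nabla^2 f\|^2$ into the combination of $\Delta\|\nabla f\|^2$ and $\langle\nabla\|\nabla f\|^2,\nabla f\rangle$ appearing in the claim.

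More concretely, since $\scal=0$ by \eqref{scal0}, \eqref{prop007} reduces to
\begin{align*}
\frac{(n-2)^{2}}{n-1}f^{2}\|T\|^{2}=2(n-1)f^{2}\|\!\Ric\!\|^{2}\|\nabla f\|^{2}-2n\,f^{2}\|\Ric(\nabla f,\cdot)\|^{2}.
\end{align*}
Using \eqref{vacuum equation1} in the form $f\Ric=\nabla^{2}f$, I will replace $f^{2}\|\!\Ric\!\|^{2}$ by $\|\nabla^{2}f\|^{2}$. Contracting \eqref{vacuum equation1} against $\nabla f$ gives $f\Ric(\nabla f,\cdot)=\nabla^{2}f(\nabla f,\cdot)=\tfrac{1}{2}\nabla\|\nabla f\|^{2}$, so $f^{2}\|\Ric(\nabla f,\cdot)\|^{2}=\tfrac{1}{4}\|\nabla\|\nabla f\|^{2}\|^{2}$. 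Substituting both identities yields
\begin{align*}
\frac{(n-2)^{2}}{n-1}f^{2}\|T\|^{2}=2(n-1)\|\nabla^{2}f\|^{2}\|\nabla f\|^{2}-\frac{n}{2}\|\nabla\|\nabla f\|^{2}\|^{2}.
\end{align*}

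To finish, I will eliminate $\|\nabla^{2}f\|^{2}$ via Bochner's formula
\begin{align*}
\tfrac{1}{2}\Delta\|\nabla f\|^{2}=\|\nabla^{2}f\|^{2}+\langle\nabla f,\nabla\Delta f\rangle+\Ric(\nabla f,\nabla f).
\end{align*}
Here $\Delta f=0$ by \eqref{vacuum equation2}, and the Ricci term is handled by contracting \eqref{vacuum equation1} twice against $\nabla f$: $f\Ric(\nabla f,\nabla f)=\nabla^{2}f(\nabla f,\nabla f)=\tfrac{1}{2}\langle\nabla\|\nabla f\|^{2},\nabla f\rangle$, so $\Ric(\nabla f,\nabla f)=\tfrac{1}{2f}\langle\nabla\|\nabla f\|^{2},\nabla f\rangle$. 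Substituting into the Bochner identity produces $2\|\nabla^{2}f\|^{2}=\Delta\|\nabla f\|^{2}-\frac{\langle\nabla\|\nabla f\|^{2},\nabla f\rangle}{f}$; plugging this into the displayed expression above completes the proof.

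The argument is essentially mechanical once one sees that the static vacuum equations allow every Ricci contraction against $\nabla f$ to be expressed in terms of $\nabla\|\nabla f\|^{2}$, and that Bochner's identity under harmonicity of $f$ exactly produces the right-hand side. The only mild subtlety is that the factor $1/f$ in the claimed identity is well-defined only away from the zero set of $f$; this is not an issue on a static vacuum system where $f>0$ by \Cref{def1}, but one should note that the formula is inherited pointwise wherever $f>0$ and extends continuously to $\{f=0\}$ since $f^{2}\|T\|^{2}$ and $f\Ric(\nabla f,\nabla f)$ remain smooth there by \eqref{vacuum equation1}.
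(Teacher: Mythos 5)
Your proof is correct and follows essentially the same route as the paper: both start from \eqref{prop007} with $\scal=0$, use \eqref{vacuum equation1} to write $f\Ric(\nabla f,\cdot)=\tfrac12\nabla\|\nabla f\|^2$ and $f^2\|\Ric\|^2=\|\nabla^2f\|^2$, and then convert the Hessian-squared term into $\Delta\|\nabla f\|^2-\tfrac1f\langle\nabla\|\nabla f\|^2,\nabla f\rangle$. The only cosmetic difference is that you invoke Bochner's formula directly, while the paper obtains the same intermediate identity by taking the divergence of \eqref{eq002} divided by $2f$ and using the contracted Bianchi identity — these are the same computation in different clothing.
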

	\begin{proof}
		Let us rewrite the norm of $T$ only in terms of the function $f$, not explicitly involving any curvature terms. To that end, taking the divergence of \eqref{eq002} divided by $2f$ and using \eqref{vacuum equation1}, \eqref{vacuum equation2} and the Bianchi identity, we get
		\begin{align*}
			2\|\!\Ric\!\|^{2}=\frac{\Delta\|\nabla f\|^{2}}{f^{2}}-\frac{\langle\nabla\|\nabla f\|^{2},\nabla f\rangle}{f^{3}}.
		\end{align*}
		Combining this identity with \eqref{eq002} and \eqref{prop007} gives the result.
	\end{proof}
	
	Let us also state the following interesting fact which is useful for understanding when $T$ vanishes and will be used to prove the rigidity result \Cref{thm:rigidity}.
	\begin{lemma}\label{lemmaT}
		 Let $(M^n,g)$, $n\geq3$, be a smooth Riemannian manifold carrying a smooth function $f\colon M\to\R$. Then $T=0$ on $M\setminus\crit$ if and only if 
		\begin{align}\label{gEinstein}
			\|\nabla f \|^2 \Ric &=- \frac{\lambda \|\nabla f\|^2}{n-1} g+\frac{n\lambda}{n-1} df\otimes df
		\end{align}
		on $M\setminus\crit$ for some smooth function $\lambda\colon M\setminus\crit\to\R$. Note that \eqref{gEinstein} implies in particular that		
\begin{align}\label{eigenvector}
			\Ric(\nabla f,\cdot)^{\#}&=\lambda\, \nabla f
	\end{align}
	so that $\nabla f$ is an eigenvector field of $\Ric$ on $M\setminus\crit$ with eigenvalue $\lambda$.
		\end{lemma}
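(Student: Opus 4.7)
The plan is to reduce the tensor equation $T=0$ to a pointwise algebraic identity for $\Ric$ by plugging $\nabla f$ into suitable slots of $T$, working throughout on $M\setminus\crit f$ where $\|\nabla f\|\neq 0$. The reverse direction will then be a direct substitution.

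For the forward direction, assume $T=0$. Since $T$ is antisymmetric in its first two entries, the vanishing $T(X,Y,Z)=0$ rewrites as
\begin{equation*}
(n-1)\bigl(\Ric(X,Z)\nabla_Y f-\Ric(Y,Z)\nabla_X f\bigr)=\Ric(X,\nabla f)g(Y,Z)-\Ric(Y,\nabla f)g(X,Z).
\end{equation*}
First I would specialize $Y=\nabla f$ and $Z=\nabla f$ simultaneously: the left-hand side becomes $(n-1)\|\nabla f\|^{2}\Ric(X,\nabla f)-(n-1)\Ric(\nabla f,\nabla f)\nabla_X f$, while the right-hand side becomes $\|\nabla f\|^{2}\Ric(X,\nabla f)-\Ric(\nabla f,\nabla f)\nabla_X f$. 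Collecting terms and dividing by $(n-2)\|\nabla f\|^{2}\neq 0$ gives the eigenvector relation
\begin{equation*}
\Ric(X,\nabla f)=\lambda\,\nabla_X f,\qquad\lambda\definedas\frac{\Ric(\nabla f,\nabla f)}{\|\nabla f\|^{2}},
\end{equation*}
which already identifies the smooth eigenvalue $\lambda$ on $M\setminus\crit f$ and proves \eqref{eigenvector}.

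Next, I would plug only $Y=\nabla f$ into the symmetrized form of $T=0$ and use the eigenvector identity just obtained to rewrite every occurrence of $\Ric(\cdot,\nabla f)$ via $\lambda\,df$. This turns the identity into
\begin{equation*}
(n-1)\|\nabla f\|^{2}\Ric(X,Z)=(n-1)\lambda\,\nabla_X f\,\nabla_Z f+\lambda\,\nabla_X f\,\nabla_Z f-\lambda\|\nabla f\|^{2}g(X,Z),
\end{equation*}
and dividing by $n-1$ yields precisely \eqref{gEinstein}. The converse direction is then an entirely algebraic verification: from \eqref{gEinstein} one first contracts with $\nabla f$ to recover \eqref{eigenvector} (the coefficient $-\frac{1}{n-1}+\frac{n}{n-1}=1$ makes this immediate), and then substituting the expression for $\|\nabla f\|^{2}\Ric$ into \eqref{tensorT} causes the $df\otimes df$ contributions to cancel by antisymmetry in $X,Y$, while the $g$ contributions cancel between the two lines in the definition of $T$.

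I do not expect any serious obstacle here: the lemma is essentially an algebraic unpacking of the definition of $T$, and the only potentially delicate point is ensuring that $\lambda$ is well-defined and smooth, which is guaranteed by restricting to $M\setminus\crit f$ where $\|\nabla f\|^{2}$ is smooth and nonzero. No use of the static vacuum equations \eqref{vacuum equation1}--\eqref{vacuum equation2} is needed, consistently with the fact that the statement is formulated for a general Riemannian manifold $(M^n,g)$ endowed with a smooth function.
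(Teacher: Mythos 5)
Your proposal is correct and follows essentially the same route as the paper: first insert $\nabla f$ into $T=0$ to obtain the eigenvector relation $\Ric(\nabla f,\cdot)^{\#}=\lambda\nabla f$ with $\lambda=\Ric(\nabla f,\nabla f)/\|\nabla f\|^{2}$, then substitute $Y=\nabla f$ and use that relation to derive \eqref{gEinstein}, with the converse being a direct algebraic substitution. The computations check out, and your observation that the static vacuum equations play no role is consistent with the paper's formulation.
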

		
	\begin{proof}
		If $T=0$, 
		one has
		\begin{align*}
			0&= T(\cdot, \cdot, \nabla f) = \frac{1}{n-2}\left(\Ric(\nabla f,\cdot) \otimes df - df \otimes \Ric(\nabla f,\cdot) \right)
		\end{align*}
		on $M\setminus\crit$ which implies \eqref{eigenvector} for a smooth function $\lambda$. To see that \eqref{gEinstein} holds, we use \eqref{eigenvector} to compute
		\begin{align*}
			0 &= \frac{n-2}{n-1}\, T(\cdot , \nabla f, \cdot) = \|\nabla f \|^2 \Ric + \frac{\lambda \|\nabla f\|^2}{n-1} g-\frac{n\lambda}{n-1} df\otimes df
		\end{align*}
		on $M\setminus\crit$ as claimed. Conversely, using \eqref{gEinstein}, we find by straightforward computations using linear and multilinear arguments that
	\begin{align*}
	T(X,Y,Z)&= \frac{\lambda}{n-2} \left(-g(X,Z)\nabla_{\!Y}f + g(Y,Z) \nabla_{\!X}f\right)\\
				& \quad- \frac{\lambda}{n-2} \left( g(Y, Z)\nabla_{X}f -g(X, Z)\nabla_{Y}f\right)=0
	\end{align*}
	on $M\setminus\crit$ for all $X,Y,Z\in\Gamma(TM)$.
	\end{proof}

Next, we prove the following local characterization of static vacuum systems $(M^{n},g,f)$ satisfying $T=0$.

\begin{theorem}[Local characterization of $T=0$]\label{thm:locally}
Let $(M^n,g,f)$, $n\geq3$, be a static vacuum system. Then $T=0$ on $M$ if and only if each regular point of $f$ has an open neighborhood $V\subseteq M\setminus\crit$ such that $(V,g\vert_{V},f\vert_{V})$ belongs to precisely one of the following types of systems, with $\lambda\vert_{V}\colon V\to\R$ denoting the eigenvalue of $\Ric$ from \eqref{eigenvector}. Either, in \textbf{Type 1}, there is a constant $a>0$, an interval $I\subseteq\R^{+}$, and a Ricci flat manifold $(\Sigma^{n-1},\sigma)$ such that
\begin{align*}
V&=I\times \Sigma\ni(h,\cdot),\\
g\vert_{V}&=dh^{2}+\sigma,\\
f\vert_{V}(h,\cdot)&=a h,\\
\lambda&\equiv0.
\end{align*}
Or, in \textbf{Types 2--4}, there are constants $a>0$, $b\in\R$, an interval $I\subseteq\R^{+}$, a Riemannian manifold $(\Sigma^{n-1},\sigma)$, and a smooth function $u\colon I\to\R^{+}$ such that
\begin{align*}
V&=I\times \Sigma\ni (r,\cdot),\\
g\vert_{V}&=\frac{1}{u(r)^{2}}dr^{2}+r^{2}\sigma,\\
f\vert_{V}(r,\cdot)&=a u(r),\\
\lambda\vert_{V}(r,\cdot)&=\lambda(r)
\end{align*}
with
\vspace{1ex}
\begin{center}
\begin{tabular}{l|c}
& \\
 \textbf{Type 2} \quad&\quad $b=0$, $(\Sigma,\sigma)$ is Ricci flat, $u(r)=\frac{1}{r^\frac{n-2}{2}}$, and $\lambda(r)=\frac{(n-1)(n-2)}{2r^{n}}$ \quad\\[3ex]\hline\\
 \textbf{Type 3} \quad& \quad $b>0$, $(\Sigma,\sigma)$ is Einstein with $\scal_{\sigma}=-(n-1)(n-2)$,\\
 &\quad $u(r)=\sqrt{\frac{b}{r^{n-2}}-1}$, $\lambda(r)=\frac{(n-1)(n-2)b}{2r^n}$, and $I\subseteq(0,b^{\frac{1}{n-2}})$ \quad\\[3ex]\hline\\
\textbf{Type 4} \quad&\quad $b\neq0$, $(\Sigma,\sigma)$ is Einstein with $\scal_{\sigma}=(n-1)(n-2)$,\\
 &\quad $u(r)=\sqrt{\frac{b}{r^{n-2}}+1}$, $\lambda(r)=\frac{(n-1)(n-2)b}{2r^n}$, and $I\subseteq(-b^{\frac{1}{n-2}},\infty)$ when $b<0$ \quad\\[3ex]
 \end{tabular}
\end{center}
\vspace{1ex}
up to a change of local coordinates.
\end{theorem}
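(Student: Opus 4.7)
\emph{Backward direction.} For each Type, I would compute the Ricci tensor of the (possibly warped) product metric via the standard warped-product formula, verify \eqref{vacuum equation1}--\eqref{vacuum equation2} directly, and observe that $\nabla f$ is automatically an eigenvector of $\Ric$ with eigenvalue $\lambda(r)$ (respectively $\lambda\equiv 0$ in Type~1), whereupon \Cref{lemmaT} yields $T\equiv 0$. This amounts to a routine verification for each of the four ansätze.

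\emph{Forward direction, setup.} Fix a regular point $p$ of $f$ and work on a connected open neighbourhood $V\subseteq M\setminus\crit$. By \Cref{lemmaT}, \eqref{gEinstein} holds on $V$ with some smooth $\lambda$. I introduce Gaussian-type coordinates $(h,x^A)$ on $V$ adapted to the foliation by level sets of $f$, where $h$ is the signed $g$-arclength parameter along the integral curves of $\nabla f/\Vert\nabla f\Vert$; then $g=dh^2+g_h$, $f$ depends only on $h$, and $\Vert\nabla f\Vert=f'(h)\neq 0$. Testing \eqref{vacuum equation1} in the normal direction and using \eqref{eigenvector} yields $\lambda=f''/f$, so $\lambda$ descends to a function of $h$ alone. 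Testing \eqref{vacuum equation1} in two tangential directions and combining with the tangential part of \eqref{gEinstein} and the identity $\nabla^2 f(X,Y)=f'(h)\,II(X,Y)$ for $X,Y$ tangent to the level sets produces the umbilicity relation $II=-\tfrac{\lambda f}{(n-1)f'}\,g_h$ for the second fundamental form.

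\emph{Forward direction, warped-product structure and ODE.} The umbilicity identity rewrites as $\partial_h g_{AB}=2\kappa(h)\,g_{AB}$ with $\kappa=-\lambda f/((n-1)f')$, whence $g_h=\phi(h)^2\,\sigma$ for a positive function $\phi$ on $I$ and a fixed Riemannian metric $\sigma$ on a cross section $\Sigma$. Substituting this warped-product ansatz into the tangential component of \eqref{gEinstein} via the classical warped-product Ricci formula forces $(\Sigma,\sigma)$ to be Einstein, $\Ric^\sigma=k\,\sigma$ for a constant $k$. Combining this with the normal identity $\Ric(\partial_h,\partial_h)=\lambda=-(n-1)\phi''/\phi$ and with $f''=\lambda f$ gives, whenever $\phi$ is not locally constant, the proportionality $\phi'\propto f$ together with the first integral $(\phi')^2=\tfrac{k}{n-2}+\tfrac{2\alpha}{(n-1)(n-2)\phi^{n-2}}$ and $\lambda=\alpha/\phi^n$ for a constant $\alpha$. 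Passing to $r\definedas\phi$ and $u(r)\definedas\phi'(h(r))$ puts the metric in the form $g=u(r)^{-2}\,dr^2+r^2\sigma$ with $u(r)^2=\tfrac{k}{n-2}+\tfrac{2\alpha}{(n-1)(n-2)r^{n-2}}$ and $f=a\,u(r)$ for a positive constant $a$.

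\emph{Classification and anticipated obstacle.} A case split then delivers the four Types. If $\alpha=0$, equivalently $\lambda\equiv 0$ on $V$, then $\phi$ is constant, $f=ah$ after a shift in $h$, and \eqref{gEinstein} forces $\sigma$ to be Ricci flat, yielding \textbf{Type~1}. If $\alpha\neq 0$, the sign of $k$ selects \textbf{Type~2} ($k=0$, $\sigma$ Ricci flat), \textbf{Type~3} ($k<0$), or \textbf{Type~4} ($k>0$), after a simultaneous rescaling of $\sigma$ and $r$ that normalises $k$ to $0$, $-(n-2)$, or $n-2$ and introduces the theorem's constant $b$; the admissible intervals $I\subseteq\R^+$ are fixed by the positivity requirement $u^2>0$. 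The main obstacle I anticipate is the algebraic bookkeeping needed to reconcile umbilicity, the warped-product Ricci formulas, and the two scaling freedoms (of $\sigma$ and of $r$) so that $u$, $\lambda$, and $\scal_\sigma$ land in the precise normalised form of each Type, together with the separate handling of the degenerate case $\alpha=0$ in which the derivation of $\phi'\propto f$ collapses.
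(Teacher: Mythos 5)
Your overall route is essentially the paper's: foliate by the level sets of $f$, use \Cref{lemmaT} to convert $T=0$ into \eqref{gEinstein}, extract umbilicity of the level sets and the Einstein condition on the cross-section from the tangential components, reduce everything to a scalar ODE for the warping function, and case-split on the integration constants. Your first integral $(\phi')^2=\tfrac{k}{n-2}+\tfrac{2\alpha}{(n-1)(n-2)\phi^{n-2}}$ with $\lambda=\alpha/\phi^{n}$ is equivalent to the paper's solution $\psi(f)=(\alpha f^{2}+\beta)^{\frac{n-1}{n-2}}$ of \eqref{ODE}, and it reproduces the four Types with the correct normalizations, so the classification itself is sound.

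There is, however, a genuine gap in your setup. You declare coordinates $(h,x^{A})$ with $h$ the arclength along the integral curves of $\nabla f/\Vert\nabla f\Vert$ and assert that $g=dh^{2}+g_{h}$ with $f=f(h)$ and $\Vert\nabla f\Vert=f'(h)$. This is \emph{not} available for a general static vacuum system: the time-$h$ images of a level set under the unit-gradient flow are level sets of $f$ (equivalently, the flow stays orthogonal to them, so that no $dh\,dx^{A}$ cross terms develop) precisely when $\Vert\nabla f\Vert$ is constant on level sets, since $\nabla_{\partial_h}\partial_h$ equals minus the tangential gradient of $\ln\Vert\nabla f\Vert$. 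That constancy is itself one of the nontrivial consequences of $T=0$ — in the paper it is \eqref{proplambda}, obtained by inserting level-set-tangential vectors into \eqref{gEinstein}, since $X(\Vert\nabla f\Vert^{2})=2\nabla^{2}f(\nabla f,X)=2f\Ric(\nabla f,X)=2f\lambda\,df(X)=0$. So you must derive this from \eqref{gEinstein} \emph{before} introducing $h$; the paper sidesteps the issue by flowing along $\nabla f/\Vert\nabla f\Vert^{2}$ and using $f$ itself as the transverse coordinate, for which the block-diagonal form \eqref{gf} holds unconditionally. Once that step is inserted your argument closes; two smaller points to tighten are that in the $\alpha=0$ case you should rule out $\phi$ affine non-constant (your own relation $\phi'\propto f$ together with $\phi''=0$ and $f'\neq0$ does this), and that for $n=3$ the constancy of $k$ in $\Ric^{\sigma}=k\sigma$ must come from the $h$-independence of $\Ric^{\sigma}$ rather than from an appeal to "Einstein implies constant".
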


\begin{remark}[Quasi-Schwarzschild systems]\label{rem:quasi}
All systems of Type 4 in \Cref{thm:locally} are quasi-Schwarzschild systems (cf.\ \cite{ndimunique}) of negative $(b>0)$ or positive $(b<0)$ mass $m=-\frac{b}{2}$, respectively. They are Schwarzschild systems of negative respectively positive mass precisely when $(\Sigma,\sigma)$ is a unit radius round sphere.
\end{remark}

\begin{proof}
By continuity, each regular point $p$ of $f$  has a neighborhood $\widetilde{U}\subseteq M$ on which $\nabla f\neq0$, so that $\widetilde{U}\subseteq M\setminus\crit$. Let $\Sigma\definedas \widetilde{U}\cap \{f=f(p)\}$ and choose local coordinates $(\varphi^{J})_{J=1}^{n-1}$ on $\Sigma$  (making $\Sigma$ smaller if necessary). Now flow the coordinates $(\varphi^{J})$ to a neighborhood of $\Sigma$ along $\frac{\nabla f}{\Vert\nabla f\Vert^{2}}$, staying inside $\widetilde{U}$. Making $\Sigma$ even smaller if necessary, this construction gives local coordinates $((f,\varphi^{J}))_{J=1}^{n-1}$ on a neighborhood $U\subset M\setminus\crit$ of $p$ with $U\approx F(f)\times\Sigma$, with $F(f)$ some open interval. 

 In the coordinates $((f,\varphi^{J}))_{J=1}^{n-1}$, one finds $\partial_{f}=\frac{\nabla f}{\Vert\nabla f\Vert^{2}}$ and obtains the usual level set flow formulas  
\begin{align}\label{gf}
g&= \frac{df^{2}}{\|\nabla f\|^{2}} + g_f,\\\label{hf}
h_{f}&=\frac{\Vert\nabla f\Vert}{2}\,\partial_{f}g_{f}
\end{align}
on $U$ and $\{f\}\times \Sigma\asdefined \Sigma_{f}$, respectively. Here, $g_{f}$ denotes the induced metric on the regular level set $\{f\}\times \Sigma\asdefined \Sigma_{f}$ of $f$ in $U$ and $h_{f}$ denotes the second fundamental form of $\Sigma_{f}$ in $U$ with respect to  the unit normal $\|\nabla f\|\,\partial_{f}$. Using harmonicity of $f$ from \eqref{vacuum equation2}, we obtain the usual formula for the mean curvature $H_{f}$ of $\Sigma_{f}$ with respect to the unit normal $\|\nabla f\|\,\partial_{f}$, that is
\begin{align}\label{Hf}
\HH_{f}&=-\partial_{f}\Vert\nabla f\Vert
\end{align}
on $\Sigma_{f}$. 

Now by \Cref{lemmaT}, we know that $T=0$ on $U$ is equivalent to the existence of a smooth function $\lambda\colon U\to\R$ such that \eqref{gEinstein} holds on $U$. Rewriting this in our adapted coordinates $(f,\varphi^{J})$ and using the static vacuum equation \eqref{vacuum equation1}, \eqref{gEinstein} implies
\begin{align}\label{A}
\partial_{f}g_{f}&=-\frac{2\lambda f}{(n-1)\|\nabla f\|^{2}}\,g_{f},\\\label{B}
\partial_{f}\|\nabla f\|&=\frac{\lambda f}{\|\nabla f\|}
\end{align}
on all $\Sigma_{f}$. Rewriting \eqref{A} using  \eqref{hf}, we obtain
\begin{align}\label{eq:gfumbilic}
h_{f}&=-\frac{\lambda f}{(n-1)\|\nabla f\|}\,g_{f}
\end{align}
so that in particular each $\Sigma_{f}$ is umbilic when $T=0$. Moreover, rewriting \eqref{B} as a vector field expression gives
\begin{align}\label{proplambda}
\nabla\|\nabla f\|^{2}&=2\lambda f \|\nabla f\|^{2}\,\nabla f
\end{align}
which shows that $\|\nabla f\|$ is constant on each level set $\Sigma_{f}$ of $f$ as can be seen by inserting all vector fields $X\in\Gamma(U)$ with $X(f)=0$ on $U$ into \eqref{proplambda}. This allows us to set
\begin{align}\label{def:psi}
\psi(f)&\definedas\|\nabla f\|\vert_{\Sigma_{f}}>0
\end{align}
for $f\in F(f)$. Inserting this into \eqref{B} gives 
\begin{align}\label{eq:lambda}
\psi'(f)&=\frac{\lambda f}{ \psi(f)}
\end{align}
on $U$, where $'=\frac{d}{df}$. In particular, $\lambda$ is constant on each level set $\Sigma_{f}$ of $f$ so that we can suggestively write $\lambda=\lambda(f)=\frac{\psi'(f)\psi(f)}{f}$ on $F(f)$. Moreover, each level set $\Sigma_{f}$ must have constant mean curvature
\begin{equation}\label{eq:Hflambda}
H_{f}=-\psi'(f).
\end{equation}

In summary, we have established that $T=0$ on $U$ if and only if $g_{f}$ satisfies 
\begin{equation}\label{eq:T0}
\partial_{f}g_{f}=-\frac{2\psi'(f)}{(n-1)\psi(f)}\,g_{f}
\end{equation}
on all $\Sigma_{f}$ for some smooth, positive function $\psi\colon U\to\R^{+}$ (which implies \eqref{def:psi}) and
\begin{equation}\label{eq:lambdaspecific}
\Ric=\frac{\psi'(f)}{(n-1)f\psi(f)}\left((n-1)df^{2}-\psi(f)^{2}g_{f}\right)
\end{equation}
holds on $U$ by \eqref{gEinstein} and \eqref{gf}. In particular, this implies that all $\Sigma_{f}$ are totally umbilic with constant mean curvature given by \eqref{eq:Hflambda}. Also, note that the static vacuum equations \eqref{vacuum equation1}, \eqref{vacuum equation2} are automatically satisfied by metrics of this type via \eqref{proplambda} and \eqref{eq:lambdaspecific}. 

Using \eqref{gf} and the definition of $\psi$ from \eqref{def:psi}, \eqref{eq:lambdaspecific} can be seen to be equivalent to 
\begin{align}\label{ODE}
0&=\frac{\psi''(f)}{\psi(f)}-\frac{\psi'(f)^{2}}{(n-1)\psi(f)^{2}}-\frac{\psi'(f)}{f\psi(f)},\\\label{Einstein}
\Ric_{g_{f}}&=\frac{1}{n-1}\left(-\psi(f)\psi''(f)+\psi'(f)^{2}-\frac{\psi(f)\psi'(f)}{f}\right)g_{f}
\end{align}
on $U$ by standard computations, where $\Ric_{g_{f}}$ denotes the Ricci tensor of $g_{f}$ on $\Sigma_{f}$. Standard ODE tricks show that the general solution to \eqref{ODE} is given by
\begin{align}\label{solutionpsi}
\psi(f)&=\left(\alpha f^{2}+\beta\right)^{\frac{n-1}{n-2}}
\end{align}
for constants $\alpha,\beta\in\R$ satisfying 
\begin{equation}\label{alphabeta}
\alpha f^{2}+\beta>0
\end{equation} on $F(f)$. Inserting \eqref{solutionpsi} into \eqref{Einstein} gives
\begin{align}\label{Einsteinsimple}
\Ric_{g_{f}}&=-\frac{4\alpha\beta\psi(f)^{\frac{2}{n-1}}}{n-2}\,g_{f}
\end{align}
on $U$. In particular, this shows that each manifold $(\Sigma_{f},g_{f})$ is Einstein. Moreover, \eqref{eq:T0} and \eqref{solutionpsi} give
\begin{align}\label{eq:T0concrete}
\partial_{f}g_{f}&=-\frac{4\alpha f}{(n-2)(\alpha f^{2}+\beta)}\,g_{f}
\end{align}
on $U$ and $F(f)$, respectively. Summarizing, we have shown that $T=0$ on $U$ is equivalent to the combination of \eqref{gf}, \eqref{def:psi}, \eqref{eq:T0concrete}, and \eqref{solutionpsi} and \eqref{Einsteinsimple} holding on $U$ for constants $\alpha,\beta\in\R$ satisfying \eqref{alphabeta}. Let us now discuss the different cases arising from picking specific cases for the signs of $\alpha,\beta$.\\[-1ex]

\underline{First of all, for $\alpha=0$,} we have by \eqref{Einsteinsimple} that $g_{f}$ is Ricci flat, and by \eqref{eq:T0concrete} that $\partial_{f}g_{f}=0$. Now set $a\definedas\beta^{\,\frac{n-1}{n-2}}$ which is well-defined as $\beta>0$ by \eqref{alphabeta}  and note that $a\in\R^{+}$ is unrestricted by \eqref{alphabeta}. Then we can rewrite the static vacuum system $(U,g,f)$ as $U\approx I\times\Sigma\asdefined V$ for the open interval  $I\definedas a^{-1}F(f)\subseteq\R^{+}$, $g=dh^{2}+\sigma$ on $V$ for $h\definedas a^{-1}f$ and with $\sigma\definedas g_{f}$ being a fixed Ricci flat metric on $\Sigma$, and $f(h,\cdot)=a h$ on $V$ satisfying $f(V)=F(f)$. Moreover, $\lambda=0$ in this case by \eqref{eq:lambda}. This shows that for $\alpha=0$, the system $(U,g,f)$ is of Type 1 and that systems of Type 1 satisfy $T=0$ on $V$ as well as the static vacuum equations \eqref{vacuum equation1}, \eqref{vacuum equation2}. The latter statement exploits that $\sigma$ is unrestricted other than being Ricci flat.\\
  
\underline{Second, for $\beta=0$,} we find that $\alpha>0$ is unrestricted by \eqref{alphabeta}. By \eqref{Einsteinsimple}, we learn that $g_{f}$ is Ricci flat, while \eqref{eq:T0concrete} gives 
\begin{equation}\label{eq:evogf}
\partial_{f}g_{f}=-\frac{4}{(n-2)f}\,g_{f}.
\end{equation}
Picking $\widetilde{\sigma}\definedas g_{f_{0}}$ for any fixed $f_{0}\in F(f)$, this gives $g_{f}=\left(\frac{f_{0}}{f}\right)^{\!\frac{4}{n-2}}\widetilde{\sigma}$. Now we set 
\begin{align*}
\kappa&\definedas\frac{n-2}{2\alpha^{\frac{n-1}{n-2}}f_{0}^{\frac{n}{n-2}}},\\
r(f)&\definedas\kappa^{\frac{2}{n}} \left(\frac{f_{0}}{f}\right)^{\!\frac{2}{n-2}}
\end{align*}
 on $F(f)$ and find that $r=r(f)$ has the inverse function $f=f(r)$ given by
 \begin{equation*}
 f(r)=\frac{\kappa^{\frac{n-2}{n}}f_{0}}{r^{\frac{n-2}{2}}}\asdefined\frac{a}{r^{\frac{n-2}{2}}}
 \end{equation*}
 on $I\definedas r(F(f))\subseteq\R^{+}$ with unrestricted $a>0$ by construction (noticing that $a=\frac{(n-2)^{\frac{n-2}{n}}}{2^{\frac{n-2}{n}}\alpha^{\frac{n-1}{n}}}$ with unrestricted $\alpha\in\R^{+}$). This gives
 \begin{equation*}
 f'(r)=-\frac{(n-2)f(r)}{2r}
 \end{equation*}
 on $I$. Setting $ \sigma\definedas \kappa^{-\frac{4}{n}}\,\widetilde{\sigma}$ and recalling \eqref{gf}, \eqref{def:psi}, and \eqref{solutionpsi},  we obtain
\begin{equation*}
g=\frac{df^{2}}{\psi(f)^{2}}+g_{f}=r^{n-2}\,dr^{2}+r^{2}\sigma
\end{equation*}
on $V\definedas I\times\Sigma$, with $(\Sigma,\sigma)$ being Ricci flat but otherwise unrestricted by \eqref{eq:evogf}. Moreover, we find 
\begin{equation*}
\lambda(r)=\frac{(n-1)(n-2)}{2r^{n}}
\end{equation*}
for $r\in I$ by \eqref{eq:lambda}. Consequently, for $\beta=0$, the system $(U,g,f)$ is of Type 2 and systems of Type 2 satisfy $T=0$ on $V$ as well as the static vacuum equations \eqref{vacuum equation1}, \eqref{vacuum equation2}.\\

\underline{Third, for $\alpha,\beta>0$,} we find that both $\alpha,\beta>0$ are unrestricted by \eqref{alphabeta}. We now observe that $\scal_{g_{f}}<0$ by \eqref{Einsteinsimple} which allows us to pick $f_0\in F(f)$ and set 
\begin{align}\label{def:r03}
r_0&\definedas \sqrt{-\frac{(n-1)(n-2)}{\scal_{g_{f_0}}}},\\\label{def:sigma3}
\sigma&\definedas \frac{1}{r_0^2}\,g_{f_0}.
\end{align}
By \eqref{eq:T0concrete}, we find that $g_f=r(f)^2\,\sigma$ for $r\colon F(f)\to\R^+$ given by
\begin{align}\label{eq:r(f)}
r(f)\definedas r_0\left(\frac{\alpha f_0^2+\beta}{\alpha f^2+\beta}\right)^\frac{1}{n-2}.
\end{align}
Plugging this into the trace of \eqref{Einsteinsimple} and exploiting that $\scal_{r^{2}\sigma}=-\frac{(n-1)(n-2)}{r^{2}}$ gives
\begin{equation*}
r_{0}(\alpha f_{0}^{2}+\beta)^{\frac{1}{n-2}}=\frac{n-2}{2\sqrt{\alpha\beta}}
\end{equation*}
 which removes our choice of $f_{0}$ and our definition of $r_{0}$ from the picture, giving 
\begin{align*}
r(f)= \frac{n-2}{2\sqrt{\alpha\beta}\,(\alpha f^2+\beta)^\frac{1}{n-2}}.
\end{align*}
with inverse function $f\colon r(F(f))\to\R^+$ given by
\begin{align*}
f(r)=\sqrt{\frac{1}{\alpha}\left(\left(\frac{n-2}{2\sqrt{\alpha\beta}}\right)^{n-2}\frac{1}{r^{n-2}}-\beta\right)},
\end{align*}
where we note that $f$ is well-defined on the interval $I\definedas r(F(f))\subseteq\R^+$. Using this, we obtain
\begin{align*}
f(r)&=a\sqrt{\frac{b}{r^{n-2}}-1},\\
g&=\frac{dr^{2}}{\frac{b}{r^{n-2}}-1}+r^{2}\sigma
\end{align*}
on $I$ and $V\definedas I\times\Sigma$, respectively, for constants $a,b>0$ given by
\begin{align*}
a&\definedas \sqrt{\frac{\beta}{\alpha}},\\
b&\definedas \frac{1}{\beta}\left(\frac{n-2}{2\sqrt{\alpha\beta}}\right)^{n-2}.
\end{align*}
We note that, other than $a,b>0$, $a,b$ are unrestricted by \eqref{alphabeta} and that $\sigma$ is an arbitrary Einstein metric on $\Sigma$ satisfying $\scal_{\sigma}=-(n-1)(n-2)$. However, it must hold that $I\subseteq(0,b^{\frac{1}{n-2}})$ which is implied by $I=r(F(f))$. Moreover, we find 
\begin{equation}\label{eq:bequation}
\lambda(r)=\frac{(n-1)(n-2)b}{2r^n}
\end{equation}
for $r\in I$ by \eqref{eq:lambda}. Consequently for $\alpha,\beta>0$, the system $(U,g,f)$ is of Type 3 and systems of Type 3 satisfy $T=0$ on $V$ as well as the static vacuum equations \eqref{vacuum equation1}, \eqref{vacuum equation2}.\\

\underline{Last but not least, for $\alpha\beta<0$,} we find that $\alpha,\beta$ are restricted by \eqref{alphabeta} such that
\begin{equation}\label{><restriction}
-\frac{\beta}{\alpha}<f^{2} \text{ when }  \alpha>0 \quad \text{ and }\quad -\frac{\beta}{\alpha}>f^{2} \text{ when }  \alpha<0
\end{equation}
on $U$. We now observe that $\scal_{g_{f}}>0$ by \eqref{Einsteinsimple} which allows us to pick $f_0\in F(f)$ and set 
\begin{align}\label{def:r03-}
r_0&\definedas \sqrt{\frac{(n-1)(n-2)}{\scal_{g_{f_0}}}},\\\label{def:sigma3-}
\sigma&\definedas \frac{1}{r_0^2}\,g_{f_0}.
\end{align}
By \eqref{eq:T0concrete}, we again have that $g_f=r(f)^2\,\sigma$ for $r\colon F(f)\to\R^+$ given by \eqref{eq:r(f)}. Plugging this into the trace of \eqref{Einsteinsimple} and exploiting that $\scal_{r^{2}\sigma}=\frac{(n-1)(n-2)}{r^{2}}$ gives
\begin{equation*}
r_{0}(\alpha f_{0}^{2}+\beta)^{\frac{1}{n-2}}=\frac{n-2}{2\sqrt{-\alpha\beta}},
\end{equation*}
giving 
\begin{align*}
r(f)= \frac{n-2}{2\sqrt{-\alpha\beta}\,(\alpha f^2+\beta)^\frac{1}{n-2}}.
\end{align*}
We note that $I\definedas r(F(f))\subseteq\R^{+}$ is unrestricted when $\alpha>0$ while 
\begin{equation}\label{eq:restricted}
I\subseteq(\frac{n-2}{2\sqrt{-\alpha\beta}\beta^{\frac{1}{n-2}}},\infty)
\end{equation}
when $\alpha<0$ by monotonicity of $r\colon F(f)\to\R^{+}$ and as $F(f)\subseteq\R^{+}$ is restricted only by~\eqref{><restriction}. The inverse function $f\colon r(F(f))\to\R^+$ of $r=r(f)$  given by
\begin{align*}
f(r)=\sqrt{\frac{1}{\alpha}\left(\left(\frac{n-2}{2\sqrt{-\alpha\beta}}\right)^{n-2}\frac{1}{r^{n-2}}-\beta\right)}
\end{align*}
and well-defined on $I$ by the above. Using this, we obtain
\begin{align*}
f(r)&=a\sqrt{\frac{b}{r^{n-2}}+1},\\
g&=\frac{dr^{2}}{\frac{b}{r^{n-2}}+1}+r^{2}\sigma
\end{align*}
on $I$ and $V\definedas I\times\Sigma$, respectively, for constants $a,b>0$ given by
\begin{align*}
a&\definedas \sqrt{-\frac{\beta}{\alpha}},\\
b&\definedas -\frac{1}{\beta}\left(\frac{n-2}{2\sqrt{-\alpha\beta}}\right)^{n-2}.
\end{align*}
We note that, other than $a>0$, $b\neq0$, $a,b$ are not further restricted and that $\sigma$ is an arbitrary Einstein metric on $\Sigma$ satisfying $\scal_{\sigma}=(n-1)(n-2)$. There is no restriction on $I$ when $b>0$ and the only restriction $I\subseteq(-b^{\frac{1}{n-2}},\infty)$ when $b<0$ by \eqref{eq:restricted} and the definition of $b$. This is consistent with $\frac{b}{r^{n-2}}+1>0$ on $\R^{+}$ when $b>0$ and $\frac{b}{r^{n-2}}+1>0$ precisely on $(-b^\frac{1}{n-2},\infty)$ when $b<0$. Moreover, we recover \eqref{eq:bequation} for $r\in I$ by \eqref{eq:lambda}. Consequently, for $\alpha\beta<0$, the system $(U,g,f)$ is of Type 4 and systems of Type 4 satisfy $T=0$ on $V$ as well as the static vacuum equations \eqref{vacuum equation1}, \eqref{vacuum equation2}.
\end{proof}
\newpage
\begin{corollary}[Options for $\lambda$ and ODEs for $\Vert\nabla f\Vert^2$]\label{coro:locally}
It will be useful later to observe that the proof of \Cref{thm:locally} shows that $\lambda$ and $\Vert\nabla f\Vert^2$ satisfy
\vspace{1ex}
\begin{center}
\begin{tabular}{c|c|c}
& \\[-1ex]
\quad Type 1 \quad & \quad $\lambda\equiv0$ \quad  & \quad $\nabla\Vert\nabla f\Vert^2=0$ \quad\\[2ex]\hline && \\[-1ex]
\quad Type 2 \quad&\quad $\lambda=\frac{2(n-1)\|\nabla f\|^{2}}{(n-2)f^{2}}$ \quad & \quad$\nabla\left(\frac{\Vert\nabla f\Vert^2}{f^\frac{4(n-1)}{(n-2)}}\right)=0$ \quad\\[4ex]\hline && \\[-1ex]
\quad Type 3 \quad&\quad $\lambda=\frac{2(n-1)\|\nabla f\|^{2}}{(n-2)(f^{2}+a^{2})}$ \quad\ & \quad$\nabla\left(\frac{\Vert\nabla f\Vert^2}{(f^2+a^2)^\frac{2(n-1)}{(n-2)}}\right)=0$ \quad\\[4ex]\hline && \\[-1ex]
\quad Type 4 \quad & \quad $\lambda=\frac{2(n-1)\|\nabla f\|^{2}}{(n-2)(f^{2}-a^{2})}$\quad\quad & \quad $\nabla\left(\frac{\Vert\nabla f\Vert^2}{\vert f^2-a^2\vert^\frac{2(n-1)}{(n-2)}}\right)=0$ \quad\\[3ex]
\end{tabular}
\end{center}
\vspace{1ex}
on $V$. Note that $a\notin f(V)$ so the ODE in Type 4 is also well-defined.
\end{corollary}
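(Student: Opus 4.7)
The corollary is essentially a computational consequence of formulas already established in the proof of \Cref{thm:locally}. The plan is to extract two key outputs from that proof and then verify each of the four rows by direct calculation. From \eqref{def:psi} we have $\Vert\nabla f\Vert^{2}=\psi(f)^{2}$, and from \eqref{solutionpsi} the explicit form $\psi(f)=(\alpha f^{2}+\beta)^{\frac{n-1}{n-2}}$ for some $\alpha,\beta\in\R$ satisfying \eqref{alphabeta}. From \eqref{eq:lambda} we also have $\lambda=\psi(f)\psi'(f)/f$. The four types correspond to sign/vanishing choices for $\alpha,\beta$ made in the proof of \Cref{thm:locally}: Type 1 to $\alpha=0$, Type 2 to $\beta=0$, Type 3 to $\alpha,\beta>0$, and Type 4 to $\alpha\beta<0$.

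For Type 1, $\alpha=0$ immediately gives $\psi(f)=\beta^{\frac{n-1}{n-2}}$ constant, so $\psi'(f)=0$, and hence both $\lambda\equiv0$ and $\nabla\Vert\nabla f\Vert^{2}=0$. For Type 2, plugging $\beta=0$ into $\psi(f)^{2}=(\alpha f^{2})^{\frac{2(n-1)}{n-2}}$ shows that $\Vert\nabla f\Vert^{2}/f^{\frac{4(n-1)}{n-2}}=\alpha^{\frac{2(n-1)}{n-2}}$ is constant, so its gradient vanishes. Differentiating $\psi$ and computing $\lambda=\psi\psi'/f$ gives $\lambda=\frac{2(n-1)}{n-2}\alpha^{\frac{2(n-1)}{n-2}}f^{\frac{2n}{n-2}}=\frac{2(n-1)}{n-2}\,\Vert\nabla f\Vert^{2}/f^{2}$ as claimed. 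For Type 3, using the identity $a=\sqrt{\beta/\alpha}$ derived in the proof, we rewrite $\alpha f^{2}+\beta=\alpha(f^{2}+a^{2})$ and read off $\Vert\nabla f\Vert^{2}/(f^{2}+a^{2})^{\frac{2(n-1)}{n-2}}=\alpha^{\frac{2(n-1)}{n-2}}$; likewise the product rule on $\psi$ produces $\lambda=\frac{2(n-1)}{n-2}\alpha(\alpha f^{2}+\beta)^{\frac{n}{n-2}}=\frac{2(n-1)}{n-2}\,\Vert\nabla f\Vert^{2}/(f^{2}+a^{2})$.

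For Type 4 the same structure applies with $a=\sqrt{-\beta/\alpha}$, so that $\alpha f^{2}+\beta=\alpha(f^{2}-a^{2})$, and the condition \eqref{alphabeta} guarantees $\alpha(f^{2}-a^{2})>0$, which in particular ensures $a\notin f(V)$. Squaring $\psi$ yields $\Vert\nabla f\Vert^{2}=|\alpha|^{\frac{2(n-1)}{n-2}}|f^{2}-a^{2}|^{\frac{2(n-1)}{n-2}}$, so the quotient $\Vert\nabla f\Vert^{2}/|f^{2}-a^{2}|^{\frac{2(n-1)}{n-2}}$ is constant. Computing $\lambda=\psi\psi'/f$ again gives $\frac{2(n-1)}{n-2}\alpha(\alpha(f^{2}-a^{2}))^{\frac{n}{n-2}}$, and the identity $\alpha\,|\alpha|^{\frac{n}{n-2}}=\mathrm{sgn}(\alpha)\,|\alpha|^{\frac{2(n-1)}{n-2}}$ combined with $f^{2}-a^{2}=\mathrm{sgn}(\alpha)\,|f^{2}-a^{2}|$ recombines these factors to give $\lambda=\frac{2(n-1)}{n-2}\,\Vert\nabla f\Vert^{2}/(f^{2}-a^{2})$. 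The only subtlety worth watching is this sign bookkeeping in Type 4, where both $\alpha$ and $f^{2}-a^{2}$ can be negative; everything else is a routine algebraic verification with no genuine obstacle.
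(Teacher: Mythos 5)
Your proposal is correct and follows exactly the route the paper intends: the corollary is stated without a separate proof precisely because it is read off from $\Vert\nabla f\Vert^2=\psi(f)^2$, $\lambda=\psi(f)\psi'(f)/f$, and the explicit solution $\psi(f)=(\alpha f^2+\beta)^{\frac{n-1}{n-2}}$ together with the four sign cases for $(\alpha,\beta)$ established in the proof of \Cref{thm:locally}. Your sign bookkeeping in Type 4, using $\alpha(f^2-a^2)>0$ from \eqref{alphabeta} to reconcile $\mathrm{sgn}(\alpha)$ with $\mathrm{sgn}(f^2-a^2)$ and to conclude $a\notin f(V)$, is the only nontrivial point and you handle it correctly.
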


\begin{remark}[The corresponding ODE in the (quasi-)Schwarzschild case]\label{rem:ODE}
Let $(M^{n},g,f)$ be a static system with $f\neq1$. Then the identity
\begin{equation}\label{mainformulap3}
\nabla\|\nabla f\|^{2}+\frac{4(n-1)}{(n-2)}\frac{f\|\nabla f\|^{2}\,\nabla f}{1-f^{2}}=0
\end{equation}
with left hand side appearing in the divergence identity \eqref{mainformulap} is equivalent to the ODE
		\begin{align}\label{RegFoli}
			\nabla\left[\frac{\|\nabla f\|^{2}}{\vert {f^{2}-1}\vert^{\frac{2(n-1)}{n-2}}}\right] &= 0
		\end{align}
		on $M$, a special case of the ODE in Type 4, namely for $a=1$, see \Cref{coro:locally}. It holds in all quasi-Schwarzschild systems (with nonzero mass) as can be seen by direct computations.
		\end{remark}

The following global characterization of static vacuum systems with $T=0$ can be proved by appealing to real analyticity. We choose to prove it \lq by hand\rq\ as this adds some useful insights.

\begin{corollary}[Global characterization of $T=0$]\label{coro:globally}
Let $(M^n,g,f)$, $n\geq3$, be a static vacuum system. Then $T=0$ on $M$ if and only if either $f\equiv{const}$ on $M$ or $(M,g)$ is (globally) isometric to a suitable piece of one of the Riemannian manifolds of Types 1, 2, 3, or 4 in \Cref{thm:locally} and $f$ is regular on $M$ and corresponds to the corresponding restriction of the lapse function of the same system of Type 1, 2, 3, or 4 under this isometry.
\end{corollary}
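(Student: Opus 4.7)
The forward direction is immediate: if $f\equiv{const}$ on $M$ then $\nabla f\equiv 0$ and each term in \eqref{tensorT} vanishes; if $(M,g)$ is isometric to a suitable piece of a Type~1, 2, 3, or 4 manifold from \Cref{thm:locally} with $f$ corresponding to the restriction of the lapse under this isometry, then $T=0$ on each local chart of the Type structure (by the second assertion of \Cref{thm:locally}), and hence on all of $M$.

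For the converse, assume $T=0$ and $f\not\equiv{const}$, and let $V$ be a connected component of the non-empty open set $U\definedas M\setminus\crit$. By \Cref{thm:locally} each point of $V$ has a neighborhood of precisely one of the four Types. My plan has three steps: (i) show that the Type and its defining constants $a,b$ are uniform on $V$; (ii) patch the local models into a global isometry of $V$ onto a piece of that Type; (iii) exclude critical points of $f$ altogether, so that $V=U=M$. For step (i) I would use that by \Cref{lemmaT} the eigenvalue $\lambda$ of $\Ric$ in the direction of $\nabla f$ is a globally defined smooth function on $V$, and by \Cref{coro:locally} each Type is distinguished by a specific functional relation between $\lambda$, $\Vert\nabla f\Vert^{2}$, and $f$ with specific constants $a,b$; continuity and connectedness of $V$ then force the Type and the constants to be globally constant. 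For step (ii) I would globalize the coordinate construction in the proof of \Cref{thm:locally} by fixing a regular level set $\Sigma_{0}\subset V$ and flowing along $\nabla f/\Vert\nabla f\Vert^{2}$: the evolution equation \eqref{eq:T0} makes all level sets of $f$ in $V$ pairwise homothetic, hence all isometric to a fixed Einstein cross-section $(\Sigma,\sigma)$ after rescaling; combined with the explicit ODE for $\Vert\nabla f\Vert^{2}$ in terms of $f$ from \Cref{coro:locally}, this yields the claimed global isometry with the lapse.

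The main obstacle is step (iii), ruling out $\crit\neq\emptyset$. I would argue by contradiction: suppose $p_{0}\in\crit$ lies in the closure of $V$. Using that $g$ and $f$ are real analytic (by ellipticity of the static vacuum equations in harmonic coordinates), the ODE relation between $\Vert\nabla f\Vert^{2}$ and $f$ valid on $V$ extends by continuity to $p_{0}$, and I would then go through the four Types: in Type~1 it reads $\Vert\nabla f\Vert\equiv a>0$, contradicting $\nabla f(p_{0})=0$; in Type~2 it reads $\Vert\nabla f\Vert^{2}\propto f^{4(n-1)/(n-2)}$ and forces $f(p_{0})=0$, contradicting $f>0$; in Type~3 it reads $\Vert\nabla f\Vert^{2}\propto(f^{2}+a^{2})^{2(n-1)/(n-2)}$ which has a strictly positive lower bound, again contradicting $\nabla f(p_{0})=0$; and in Type~4 it reads $\Vert\nabla f\Vert^{2}\propto\vert f^{2}-a^{2}\vert^{2(n-1)/(n-2)}$ and forces $f(p_{0})=a$, but in the Type~4 model $f\to a$ only as the radial coordinate $r\to\infty$, where $g$ degenerates into an asymptotic cone; hence $p_{0}$ cannot correspond to a smooth interior point of $M$. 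Thus $\crit=\emptyset$, so $V=U=M$ by connectedness of $M$, and the global isometry constructed in step (ii) finishes the proof.
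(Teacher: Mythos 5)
Your overall strategy coincides with the paper's: reduce to the local classification of \Cref{thm:locally}, show that the Type and its constants are uniform on each connected component $V$ of $M\setminus\crit$, and then rule out critical points by extending the relation $\Vert\nabla f\Vert^{2}=\rho\cdot(\text{function of }f)$ by continuity to a boundary point $p_{0}\in\partial V\subseteq\crit$. Steps (i) and (ii) and the exclusion of Types 1, 2, and 3 are essentially the paper's argument (one small point: you should note explicitly that the constant $\rho$ is strictly positive on $V$, since $V$ contains no critical points and the factor multiplying $\rho$ is positive there; your Type 1--3 contradictions rely on this). Also, real analyticity is not needed to pass to $p_{0}$ --- continuity of $f$ and $\Vert\nabla f\Vert^{2}$ suffices.

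The genuine gap is the Type 4 case. The relation $\Vert\nabla f\Vert^{2}=\rho\,\vert f^{2}-a^{2}\vert^{2(n-1)/(n-2)}$ only forces $f(p_{0})=a$; it does not by itself exclude $p_{0}$. Your justification --- that in the model $f\to a$ only as $r\to\infty$, ``where $g$ degenerates into an asymptotic cone, hence $p_{0}$ cannot be a smooth interior point'' --- is an assertion rather than an argument, and its premise is inaccurate: the Type 4 metric does not degenerate as $r\to\infty$, it is a smooth metric on a metrically unbounded region. Moreover, the local model describes only points of $V$, not the critical point $p_{0}$ itself, so nothing about $p_{0}$ can be read off the model directly. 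What must actually be shown is that $p_{0}$ would lie at \emph{infinite distance} from $V$. The paper does this by taking a geodesic $\gamma$ from a point of $V$ to $p_{0}$ (arranged, after truncation, to stay in $V$ before its endpoint), using the radial part of the geodesic equation to show that $S=r\circ\gamma$ has at most one critical point, which is a strict minimum, so that $S$ increases monotonically to $\infty$, and then estimating
\begin{equation*}
L[\gamma]\;\geq\;\int_{c_{\varepsilon}}^{d_{\varepsilon}}\frac{dr}{\sqrt{1+\tfrac{b}{r^{n-2}}}}\;\longrightarrow\;\infty ,
\end{equation*}
contradicting the finite length of a geodesic segment in $M$. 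Some such quantitative distance estimate is indispensable here; without it the Type 4 exclusion, and hence the whole ``only if'' direction, does not close.
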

\begin{proof}
First, if $(M,g,f)$ is a piece of a static vacuum system of Type 1, 2, 3, or 4, we know from \Cref{coro:locally} that $f$ is regular on $M$. Next, from \Cref{thm:locally}, we know that $T=0$ on $M$. Also, if $(M,g,f)$ is a static vacuum system with $f\equiv\text{const}$ on $M$, we trivially know that $T=0$ by definition of $T$ in \eqref{tensorT}.

On the other hand, if $(M,g,f)$ is a static vacuum system with $T=0$ and $f\neq\text{const}$ on $M$, we know from \Cref{thm:locally} that $(M,g,f)$ looks like one of the systems of Types 1, 2, 3, or 4 locally near each regular point. From the ODEs in \Cref{coro:locally}, we can deduce that there exists a constant $\rho>0$ such that
\vspace{1ex}
\begin{center}
\begin{tabular}{c|c}
& \\[-1ex]
\quad Type 1 \quad &  \quad $\Vert\nabla f\Vert^2=\rho$ \quad\\[2ex]\hline & \\[-1ex]
\quad Type 2 \quad&\quad $\Vert\nabla f\Vert^2=\rho f^\frac{4(n-1)}{(n-2)}$ \quad\\[3ex]\hline & \\[-1ex]
\quad Type 3 \quad&\quad $\Vert\nabla f\Vert^2=\rho(f^2+a^2)^\frac{2(n-1)}{(n-2)}$ \quad\\[3ex]\hline & \\[-1ex]
\quad Type 4 \quad & \quad  $\Vert\nabla f\Vert^2=\rho\vert f^2-a^2\vert^\frac{2(n-1)}{(n-2)}$ \quad\\[3ex]
\end{tabular}
\end{center}

on this neighborhood. Here, we know that $\rho>0$ because the neighborhood contains no critical points of $f$ and the factors multiplied by $\rho$ are strictly positive on said neighborhood by the assumptions on the interval $I$. Now suppose towards a contradiction that $f$ has $\crit\neq\emptyset$. First note that  by smoothness of all involved quantities and because of the geometric nature of the radial coordinate $r$, if two such (open) neighborhoods overlap, they must be of the same type, have the same constants $a$, $b$, and $\rho>0$ (where applicable), and the same Einstein metric $\sigma$ on the intersections of the level sets of $r$ in those neighborhoods. Consequently, each connected component of $M\setminus\crit$ has a uniform type, fixed constants $a$, $b$, and $\rho>0$ (where applicable), and a global coordinate $r$. 

As $\crit$ is closed by continuity and $\crit\neq M$ by assumption, we know that every connected component $U$ of $M\setminus\crit\neq\emptyset$ is open in $M$ and has $\partial U\subseteq\crit$. If $\partial U=\emptyset$ then $U$ is closed in $M$ and by connectedness of $M$ we have $U=M$ and hence $\crit=\emptyset$ as desired. Now suppose towards a contradiction that $\partial U\neq\emptyset$. By continuity of $f$ and $\Vert\nabla f\Vert^{2}$, this leads to the contradiction $\rho=0$ on $U$ in case $U$ is of Types 1, 2, or 3 because $f>0$ and $f^2+a^2>0$ by the definition of static systems. 

The same contradiction arises if $U$ is of Type 4 unless $f\vert_{\partial U}= a$. Combining this with the formula for $f$ in Type 4, we learn that $r\to\infty$ when approaching $\partial U$. Now pick $x\in U$ and $q\in\partial U$ and let $\gamma\colon[0,1]\to M$ denote the geodesic connecting $\gamma(0)=x$ to $\gamma(1)=q$ in $M$. If $\gamma\vert_{[0,1)}$ does not run entirely within $U$ or in other words if $\gamma([0,1))\cap\partial U\neq\emptyset$, there exists a smallest parameter $0<s_{\#}<1$ such that $\gamma(s_{\#})\in \partial U$ while $\gamma\vert_{[0,s_{\#})}\subset U$ because $\gamma([0,1])$ is compact and $\partial U$ is closed. Thus, without loss of generality, we can and will assume that indeed $\gamma([0,1))\subset U$, replacing the original endpoint $q$ by $\gamma(s_{\#})$ and reparametrizing $\gamma$ accordingly with an affine parameter transformation. {As we have assumed that $\gamma([0,1))\subset U$, we can now consider the function $S\colon[0,1)\to\R^{+}$ given by $S\definedas r\circ\gamma$. 
Then computing in local neighborhoods $I\times\Sigma$ on which we have coordinates $(r,\varphi^{K})_{K=1}^{n-1}$ as constructed above, we have $\dot{\gamma}=\dot{S}\,\partial_{r}\vert_{\gamma}+\dot{X}^{K}\partial_{\varphi^{K}}\vert_{\gamma}$ on all suitably small intervals $J\subseteq[0,1)$, where $X^{K}\definedas\varphi^{K}\circ\gamma$ on $J$. The radial part of the geodesic equation for $\gamma$ gives
\begin{align*}
\ddot{S}-\frac{f'(S)}{f(S)}\dot{S}^{2}=\frac{Sf(S)^{2}}{a^{2}}\,\sigma_{\gamma}(\dot{X},\dot{X})
\end{align*}
on $J$. In particular, if $\dot{S}(s_{*})=0$ for some $s_{*}\in [0,1)$, we learn from $\dot{\gamma}(s_{*})\neq0$ that $\ddot{S}(s_{*})>0$. Hence $S$ has a strict local minimum at $s_{*}$. In particular, $S$ can have at most one critical point in $[0,1)$. Without loss of generality, we will assume that there is no critical point of $S$ on $(0,1)$, replacing the original starting point $x$ by $\gamma(s_{*})$ and reparametrizing $\gamma$ accordingly with an affine parameter transformation. Moreover, as we have seen that $S(s)\to\infty$ for $s\to1$, it follows that $\dot{S}>0$ on $(0,1)$. For any $0<\varepsilon<\frac{1}{2}$, this allows us to estimate the length $L[\gamma]$ of $\gamma$ from below by
\begin{align*}
L[\gamma]&\geq L[\gamma\vert_{(\varepsilon,1-\varepsilon)}]=\int_{\varepsilon}^{1-\varepsilon}\Vert\dot{\gamma}(s)\Vert\,ds\geq\int_{\varepsilon}^{1-\varepsilon}\frac{a\dot{S}(s)}{f(S(s))}\,ds\\
&=\int_{\varepsilon}^{1-\varepsilon}\frac{\dot{S}(s)}{\sqrt{1+\frac{b}{S(s)^{n-2}}}}\,ds=\int_{c_{\varepsilon}}^{d_{\varepsilon}}\frac{1}{\sqrt{1+\frac{b}{r^{n-2}}}}\,dr\asdefined E_{\varepsilon}
\end{align*}
for suitable constants $S(0)<c_{\varepsilon}<d_{\varepsilon}<\infty$ with $c_{\varepsilon}\to S(0)$ and $d_{\varepsilon}\to\infty$ as $\varepsilon \to 0$. As is well-known, this shows that $E_{\varepsilon}\to\infty$ as $\varepsilon\to0$, giving the desired contradiction. Hence $\crit=\emptyset$. This also proves the remaining claims as $M$ is connected and smoothly partitioned by the regular level sets of $f$ and because $r$ is the scalar curvature radius and thus a geometric coordinate.}
\end{proof}

\section{Recovering the Schwarz\-schild geometry}\label{sec:rigidity}
In this section, we will prove \Cref{thm:rigidity}. Its proof heavily relies on \Cref{coro:globally}. Before we start, let us make the following remark.

\begin{remark}[Simplified rigidity argument]\label{rem:simplify}
Note that proving \Cref{thm:rigidity} with the additional assumption that \eqref{mainformulap3} holds on $M$ would be somewhat simpler, readily establishing the absence of critical points as in the proof of \Cref{coro:globally} as well as ruling out Types 1--3 in \Cref{thm:locally} and fixing $a=1$ in Type 4. While assuming \eqref{mainformulap3} would be fully sufficient for getting rigidity in all claimed geometric inequalities when $p>p_{n}$, it is important to prove \Cref{thm:rigidity} without this extra assumption to be able to include the threshold case $p=p_{n}$.
\end{remark}

	\begin{proof}[Proof of \Cref{thm:rigidity}]
		By \Cref{coro:globally}, we know that $f$ regularly foliates $M\cup\,\partial M$ as $f=\text{const}$ on $M$ contradicts \Cref{rmk:lapse-asymp}. Also we know that $(M,g,f)$ is isometric to a suitable piece of a system of either of the Types 1--4 with fixed constants $a,b$. As in our setup here $M$ is regularly foliated by closed level sets of $f$ up to the boundary e.g.\ via \Cref{lem:asymptotics} and \Cref{rem:complete}, we know that $(M,g,f)$ is in fact globally isometric to a system of either of the Types 1--4 with fixed constants $a,b$, a fixed closed Einstein manifold $(\Sigma,\sigma)$, and a fixed interval $I\subseteq\R^+$ {which satisfies} $I\subseteq(-b^\frac{1}{n-2},\infty)$ in the $b>0$ case of Type 4. In particular, $(M,g)$ is a warped product.
		
	Next, we know from the proof of \Cref{coro:globally} that $\Vert\nabla f\Vert$ is fully determined by $f$ via a precise formula, up to a multiplicative constant $\rho>0$. Now recall from \Cref{lem:asymptotics} that $\Vert\nabla f\Vert\to0$ as $\vert x\vert\to\infty$ for asymptotically flat static vacuum systems and note that this rules out Types 1--3 as $f\to1$ as $\vert x\vert\to\infty$. For Type 4, the same argument fixes $a=1$.

	Now let $x \colon M \setminus K \longrightarrow \R^n \setminus \overline{B}$ denote a diffeomorphism making $(M,g,f)$ asymptotically flat and denote the induced coordinates by $(x^{i})$. For convenience, let us switch to standard polar coordinates $(\vert x\vert,\theta^{J})_{J=1}^{n-1}$ for $\R^{n}$ associated to $(x^{i})$ so that $(\partial_{\theta^{K}})^{i}=O(\vert x\vert)$ as $\vert x\vert\to\infty$ and $\delta(\partial_{\theta^{I}},\partial_{\theta^{J}})=r^{2}(g_{\mathbb{S}^{n-1}})(\partial_{\theta^{I}},\partial_{\theta^{J}})$ for $I,J,K=1,\dots,n-1$. Thinking of the coordinate $r$ in the Type 4 representation of our system as a function $r\colon M\to\R^+$, the asymptotic assumptions we made on $f$ in \eqref{eq:f-asymp} give
\begin{align*}
r&=\left(-\frac{b}{2m}\right)^\frac{1}{n-2}\vert x\vert+o_2(\vert x\vert)
\end{align*}
as $\vert x\vert\to\infty$. On the other hand, recalling that $r$ denotes the scalar curvature radius of the level sets of $f$ and plugging in $\scal=0$ as well as the asymptotic decay assertions from \Cref{lem:asymptotics} and in particular \eqref{asyH}, the twice contracted Gau{\ss} equation gives
\begin{align*}
\frac{(n-1)(n-2)}{\left(-\frac{b}{2m}\right)^\frac{2}{n-2}\vert x\vert^2}-\frac{n-2}{n-1}\left(\frac{n-1}{\vert x\vert}\right)^2&=o(\vert x\vert^{-2})
\end{align*}
as $\vert x\vert\to\infty$, where we have used that we already know that all level sets of $f$ are totally umbilic. This gives $b=-2m$ (in line with \Cref{rem:quasi}) and thus $r=\vert x\vert+o_2(\vert x\vert)$ as $\vert x\vert\to\infty$. Taking a $\partial_{\theta^{K}}$-derivative of this identity, one sees that
		\begin{align*}
			\frac{\partial r}{\partial \theta^{K}}&=\partial_{\theta^{K}}o_{2}(\vert x\vert)=(\partial_{\theta^{K}})^{i}o_{1}(1)=o_{1}(\vert x\vert)
		\end{align*}
as $\vert x\vert\to\infty$ for $K=1,\dots,n-1$.		
		
	 To see that $(\Sigma,\sigma)$ is indeed isometric to the standard round $(n-1)$-sphere of radius $1$, we need to carefully consider the asymptotic decay of $g$. As implicitly done above and as usual, we will interpret $\sigma$ as an $r$-independent tensor field on $M$ which is applied to tensor fields on $M$ by first projecting them tangentially to the level sets $\Sigma_{f}\approx\{r(f)\}\times\partial M$ of $f$. Similarly, we interpret $g_{\mathbb{S}^{n-1}}$ as an $\vert x\vert$-independent tensor field on $\R^{+}\times\mathbb{S}^{n-1}$ by projection onto round spheres as usual. Exploiting this convention, our asymptotic flatness assumptions in \Cref{def2} translated to spherical polar coordinates and the above insights give
		\begin{align*}
			\vert x\vert^{2}(g_{\mathbb{S}^{n-1}})_{\theta^{I}\theta^{J}}+o(\vert x\vert^{2})=g_{\theta^{I}\theta^{J}}&=o(\vert x\vert^{2})+\vert x\vert^2(1+o(1))\sigma_{\theta^{I}\theta^{J}}
		\end{align*}
		as $\vert x\vert\to\infty$ for $I,J=1,\dots,n-1$. This can easily be rewritten as
		\begin{align*}
			\sigma_{\theta^{I}\theta^{J}}=(1+o(1))(g_{\mathbb{S}^{n-1}})_{\theta^{I}\theta^{J}}=(g_{\mathbb{S}^{n-1}})_{\theta^{I}\theta^{J}}+o(r)
		\end{align*}
		as $r\to\infty$ for $I,J=1,\dots,n-1$. As $\sigma$ is independent of $r$, this allows us to conclude $(\Sigma,\sigma)$ is isometric to the round $(n-1)$-sphere of radius $1$ as desired.
		
 Thus, as $b=-2m$, we deduce that $(M,g)$ must be isometric to the piece $[r_{0},\infty)\times\mathbb{S}^{n-1}$  of the Schwarzschild manifold $(M^{n}_{m},g_{m})$ of mass $m$  for $f_0>0$, with $f$ corresponding to $f_{m}$ while $(M\setminus\partial M,g)$ must be isometric to the piece $(r_0,\infty)\times\mathbb{S}^{n-1}$  of the Schwarzschild manifold $(M^{n}_{m},g_{m})$ of mass $m$ when $f_{0}=0$, with $f$ corresponding to $f_{m}$. Switching to isotropic coordinates then also allows us to conclude that the claims extend to $\partial M$ when $f_{0}=0$. Here, $r_0\definedas r(\partial M)$ denotes the scalar curvature radius of $\partial M$.
	\end{proof}

\newpage
	\section{The divergence identity}\label{sec:proofs}
	With the help of \Cref{lem21}, we are now in the position to prove \Cref{maintheoremp}.
	\begin{proof}[Proof of \Cref{maintheoremp}]
		First, note that $f\neq1$ on $M$ by assumption. Then clearly
		\begin{align*}
			\left\|\nabla\|\nabla f\|^{2}+\frac{4(n-1)}{n-2}\frac{f\|\nabla f\|^{2}\,\nabla f}{1-f^{2}}\right\|^{2}&=\|\nabla\|\nabla f\|^{2}\|^{2}+\frac{8(n-1)}{n-2}\frac{f\|\nabla f\|^{2}}{1-f^{2}}\langle\nabla\|\nabla f\|^{2},\nabla f\rangle\\ 
			&\quad+\frac{16(n-1)^{2}}{(n-2)^{2}}\frac{f^{2}\|\nabla f\|^{6}}{(1-f^{2})^{2}}
		\end{align*}
		holds on $M$. Combining this with \Cref{lem21}, we get that
		\begin{align*}
			&\frac{(n-2)^{2}}{(n-1)^{2}}fF(f)\|T\|^{2} +F(f)\left(\frac{n}{2(n-1)}+\frac{p-3}{2}\right)\frac{1}{f}\left\|\nabla\|\nabla f\|^{2}+\frac{4(n-1)}{n-2}\frac{f\|\nabla f\|^{2}}{1-f^{2}}\nabla f\right\|^{2}\\
			&\quad= \|\nabla f\|^{2}\,F(f)\left[\frac{1}{f}\Delta\|\nabla f\|^{2} - \frac{1}{f^{2}}\langle\nabla\|\nabla f\|^{2},\nabla f\rangle\right.\\
			&\quad\quad\phantom{\|\nabla f\|^{2}}\quad\left.+\left(\frac{4n}{n-2}+\frac{4(n-1)(p-3)}{n-2}\right)\frac{1}{1-f^{2}}\langle\nabla\|\nabla f\|^{2},\nabla f\rangle\right.\\
			&\quad\quad\phantom{\|\nabla f\|^{2}}\quad\left.+\left(\frac{8n(n-1)}{(n-2)^{2}}+\frac{8(n-1)^{2}(p-3)}{(n-2)^{2}}\right)\frac{f}{(1-f^{2})^{2}}\|\nabla f\|^{4}\right]\\
			&\quad\quad+\frac{p-3}{2}\frac{F(f)}{f}\Vert\nabla \Vert \nabla f\Vert^{2}\Vert^{2}
		\end{align*}
		holds on $M$ for any smooth function $F\colon[0,1)\cup(1,\infty)\to\R$ and any $0<f<1$ or $f>1$. Also, using~\eqref{vacuum equation1}, one computes
		\begin{align*}
			&\diver\left(\frac{F(f)}{f}\|\nabla f\|^{p-3}\,\nabla\|\nabla f\|^{2}+G(f)\|\nabla f\|^{p-1}\,\nabla f\right)\\
			&\quad=\frac{F(f)}{f}\|\nabla f\|^{p-3}\Delta\|\nabla f\|^{2}+\left(\frac{F'(f)}{f}-\frac{F(f)}{f^{2}}+\frac{p-1}{2}G(f)\right)\|\nabla f\|^{p-3}\langle\nabla\|\nabla f\|^{2},\nabla f\rangle\\
			&\quad\quad+\frac{p-3}{2}\frac{F(f)}{f}\|\nabla f\|^{p-5}\,\|\nabla \Vert\nabla f\Vert^{2}\|^{2}+G'(f)\|\nabla f\|^{p+1}
		\end{align*}
		on $M\setminus\crit$ for any $p\in\R$ and any smooth functions $F,G\colon[0,1)\cup(1,\infty)\to\R$, where $'=\frac{d}{df}$. Combining these two identities, we find
		\begin{align*}
			&\Vert\nabla f\Vert^{p-3}\,F(f)\left[\frac{(n-2)^{2}f}{(n-1)^{2}}\|T\|^{2}\right.\\
			&\phantom{\Vert\nabla f\Vert^{p-3}[}\left.+\left(\frac{n}{2(n-1)}+\frac{p-3}{2}\right)\frac{1}{f}\left\|\nabla\|\nabla f\|^{2}+\frac{4(n-1)}{(n-2)}\frac{f\|\nabla f\|^{2}}{(1-f^{2})}\nabla f\right\|^{2}\right]
			\end{align*} 	
			\begin{align*}
			&\quad= \|\nabla f\|^{2}\left[\diver\left(\frac{F(f)}{f}\|\nabla f\|^{p-3}\,\nabla\|\nabla f\|^{2}+G(f)\|\nabla f\|^{p-1}\,\nabla f\right)\right.\\
			&\quad\quad\phantom{\|\nabla f\|^{2}}\quad\left. +\left\{-\left(\frac{F'(f)}{f}-\frac{F(f)}{f^{2}}+\frac{p-1}{2}G(f)\right)\|\nabla f\|^{p-3}\langle\nabla\|\nabla f\|^{2},\nabla f\rangle-G'(f)\|\nabla f\|^{p+1}\right.\right.\\
			&\quad\quad\quad\quad\phantom{\|\nabla f\|^{2}}\quad\left.\left.-\frac{F(f)}{f^{2}}\|\nabla f\|^{p-3}\langle\nabla\|\nabla f\|^{2},\nabla f\rangle\right.\right.\\
			&\quad\quad\quad\quad\phantom{\|\nabla f\|^{2}}\quad\left.\left.+\left(\frac{4n}{n-2}+\frac{4(n-1)(p-3)}{n-2}\right)\frac{F(f)}{1-f^{2}}\|\nabla f\|^{p-3}\langle\nabla\|\nabla f\|^{2},\nabla f\rangle\right.\right.\\
			&\quad\quad\quad\quad\phantom{\|\nabla f\|^{2}}\quad \left.\left.+\frac{2(n-1)}{n-2}\left(\frac{4n}{n-2}+\frac{4(n-1)(p-3)}{n-2}\right)\frac{fF(f)}{(1-f^{2})^{2}}\|\nabla f\|^{p+1} \right\}\right]
		\end{align*}
		on $M\setminus\crit$ for any $p\in\R$ and any smooth functions $F,G\colon[0,1)\cup(1,\infty)\to\R$. Now, plugging in the precise forms of $F$ and $G$ given in \eqref{eq:F} and \eqref{eq:G} and observing that they solve the system of ODEs
		\begin{align*}
			F'(t)&=4\left(\frac{(n-1)(p-1)}{(n-2)}-1\right)\frac{tF(t)}{1-t^{2}}-\frac{p-1}{2}tG(t),\\
			G'(t)&=\frac{8(n-1)}{n-2}\left(\frac{(n-1)(p-1)}{(n-2)}-1\right)\frac{tF(t)}{(1-t^{2})^{2}}
		\end{align*}
		for $t\in[0,1)\cup(1,\infty)$, one detects that the term inside the braces vanishes and obtains \eqref{mainformulap} on $M\setminus\crit$ for all $p\in\R$. 
		
Now let us address the claimed continuity of the divergence on $M$ for $p\geq3$: First, we note that second term in the argument of the divergence is continuously differentiable on $M$ for $p\geq3$ by smoothness of $f$ and $\|\nabla f\|^{2}$, as one immediately sees upon rewriting $\|\nabla f\|^{p-1}=(\|\nabla f\|^{2})^{\frac{p-1}{2}}$. Considering the first term $\Vert\nabla f\Vert^{p-3}\,\nabla\Vert\nabla f\Vert^{2}$ in the argument of the divergence, we note that it vanishes at all critical points of $f$ as $\Vert\nabla f\Vert^{2}\geq0$ attains a minimum there, hence it is continuous on $M$. Next, its derivative (defined on $M\setminus\crit$) extends continuously across critical points because $\nabla\Vert\nabla f\Vert^{2}=2\nabla^{2}f(\nabla f,\cdot)$, so that the derivative of $\|\nabla f\|^{p-3}$ is bounded from above by $\|\nabla f\|^{p-4}\|\nabla^{2}f\|$ by the Cauchy--Schwarz inequality away from $\crit$; as it gets multiplied by another factor of $\nabla\|\nabla f\|^{2}$, we recover a continuous upper bound multiplied by $\|\nabla f\|^{p-3}$ which goes to zero for $p>3$. This bound has a numerical factor $p-3$, hence it identically vanishes for $p=3$. Altogether, noting that the other contributions to the derivative of $\Vert\nabla f\Vert^{p-3}\,\nabla\Vert\nabla f\Vert^{2}$ are smooth anyways, this establishes that the divergence and thus \eqref{mainformulap} continuously extends to $M$ for $p\geq3$ as claimed. Last but not least, the right hand side of \eqref{mainformulap} is manifestly non-negative if \eqref{eq:thresholdp} holds which gives \eqref{mainformulap0}.
	\end{proof}
		
	It may be worth noting that the free constants $c,d\in\R$ in the statement of \Cref{maintheoremp} correspond to the free constants of integration of the ODEs for $F$ and $G$ arising in its proof. Moreover, it may be useful to note that, for the Schwarz\-schild system $(M^{n}_{m},g_{m},f_{m})$ of mass $m\neq0$, the first term of the right-hand side of \eqref{mainformulap} vanishes by \Cref{lem20}, while the second term manifestly vanishes by an explicit computation, see also \Cref{rem:ODE}. Moreover, $\crit_{m}=\emptyset$. Hence by \Cref{maintheoremp}, the vector field inside the divergence of \eqref{mainformulap} is divergence-free in the Schwarz\-schild case and thus gives rise to a three-parameter family (parametrized by $c,d,p$) of conserved quantities
	\begin{align*}
		\int_{\{f_{m}=z\}}\left[\frac{F(f_{m})}{f_{m}}\Vert\nabla_{m}f_{m}\Vert_{m}^{p-3}g_{m}(\nabla_{m}\|\nabla_{m} f_{m}\|^{2}_{m},\nu_{m})+G(f_{m})\|\nabla_{m} f_{m}\|^{p-1}_{m}g_{m}(\nabla_{m}f_{m},\nu_{m})\right]\!dS_{m},
	\end{align*}
	by the divergence theorem. Here, $dS_{m}$, $\nabla_{m}$, $\Vert\cdot\Vert_{m}$, and $\nu_{m}$ denote the area element on $\{f_{m}=z\}$, the covariant derivative, and the tensor norm induced by $g_{m}$, and the $g_{m}$-unit normal to $\{f_{m}=z\}$ pointing towards infinity. Evaluating this conserved quantity for $z\to1$, one finds\footnote{See also the more general discussion and the computations in \Cref{sec:inequalities}.} $-\mathcal{F}^{\,c,d}_{\!p}(m)$ from \eqref{eq:Fmcd}.
	
	\section{Geometric inequalities}\label{sec:inequalities}
	In this section, we will prove the geometric inequality in \Cref{maintheoremp2} and its equivalent formulation \Cref{maintheoremp3}. To do so, we will exploit the divergence identity~\eqref{mainformulap} by estimating its right-hand side from below by zero and applying the divergence theorem in combination with the asymptotic flatness assumptions. To apply a suitably adapted version of the divergence theorem, we will need to first assert integrability of the left hand side in \eqref{mainformulap}; our proof of said integrability is inspired by \cite[Section 4]{Anabel}, see also \cite{Agostiniani.2020}. For the equality claim, we will rely on the rigidity assertion of \Cref{thm:rigidity}. We will then also show how to derive \Cref{teoclassifica1,teoclassifica2} from \Cref{maintheoremp2,maintheoremp3}.
	
	\begin{remark}\label{extremalF}
		In the setting of \Cref{maintheoremp2}, consider first the case when $f_{0}\in[0,1)$. We have $F\geq0$ on $[f_{0},1)$ if and only if both $c+d\geq0$ and $cf_{0}^{2}+d\geq0$, see also \Cref{fig}. In particular,  $F>0$ holds on $(f_{0},1)$ provided that in addition we do  not have $c=d=0$. Similarly if $f_{0}\in(1,\infty)$, we have $F\geq0$ on $(1,f_{0}]$ if and only if both $c+d\geq0$ and $cf_{0}^{2}+d\geq0$ and  $F>0$ on $(1,f_{0})$ if in addition $c=d=0$ does not hold, see also \Cref{fig2}. This explains the assumptions made on $c,d$ in \Cref{maintheoremp2}.
	\end{remark}
	
	\begin{figure}[h]
		\includegraphics[scale=0.75]{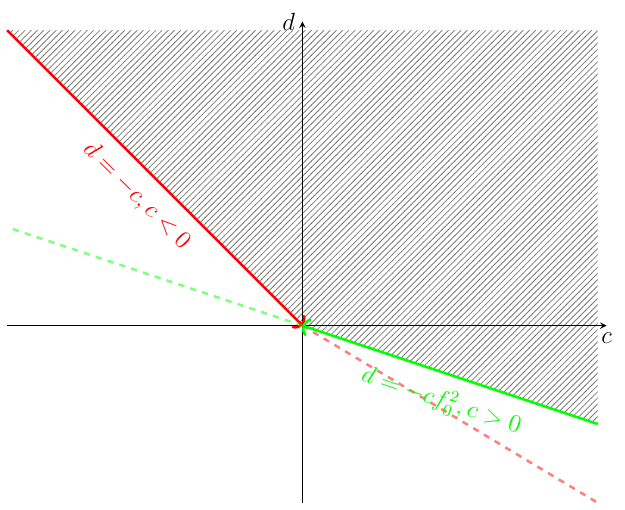}
		\caption{The shaded region together with the \textcolor{red}{red semi-axis} represents all $(c,d)\in\R^{2}$ such that $F>0$ on $[f_{0},1)$ for some $f_{0}\in[0,1)$. On the \textcolor{green}{green semi-axis}, $F(f_{0})=0$ and $F>0$ on $(f_{0},1)$ so in particular $F\geq0$.}\label{fig}
	\end{figure}
	
	\begin{figure}[h]
		\includegraphics[scale=0.75]{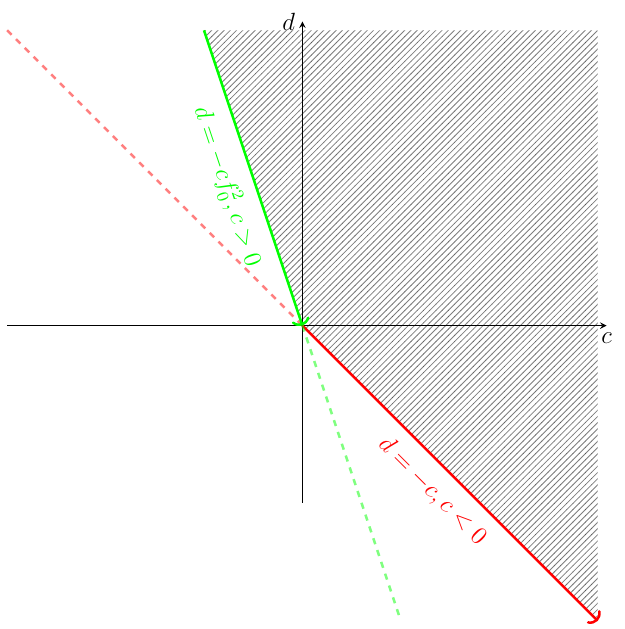}
		\caption{The shaded region together with the \textcolor{red}{red semi-axis} represents all $(c,d)\in\R^{2}$ such that $F>0$ on $(1,f_{0}]$ for some $f_{0}\in(1,\infty)$. On the \textcolor{green}{green semi-axis}, $F(f_{0})=0$ and $F>0$ on $(1,f_{0})$ so in particular $F\geq0$.}\label{fig2}
	\end{figure}
	
	\begin{proof}[Proof of \Cref{maintheoremp2}]
		First of all, by \Cref{rmk:lapse-asymp} and \Cref{Smarr}, we know that $m\neq0$ and $\kappa\neq0$, with $m,\kappa>0$ if $f_{0}\in[0,1)$ and $m,\kappa<0$ if $f_{0}\in(1,\infty)$. Next, note that the right-hand side of \eqref{mainformulap} is non-negative on $M\setminus\crit$ if $c,d\in\R$ satisfy $c+d\geq0$, $cf_{0}^{2}+d\geq0$ as this gives $F(f)\geq0$ by \Cref{extremalF}. Now set
		\begin{align*}
			\mathcal{D}\definedas\diver\left(\frac{F(f)}{f}\|\nabla f\|^{p-3}\,\nabla\|\nabla f\|^{2}+G(f)\|\nabla f\|^{p-1}\,\nabla f\right)
		\end{align*}
		on $M\setminus\crit$. Then by \Cref{maintheoremp}, $\mathcal{D}$ is non-negative and satisfies \eqref{mainformulap} on $M\setminus\crit$. Aiming for an application of the divergence theorem to $\mathcal{D}$, let us show that $\mathcal{D}$ can be extended to a $dV$-integrable function on $M\cup\,\partial M$, where the volume measure $dV$ naturally extends to $\partial M$ by smoothness of the metric $g$. Let us first extend $\mathcal{D}$ to $\partial M$. As $f$ is regular in a neighborhood of $\partial M$, $\mathcal{D}$ continuously extends to $\partial M$; this is immediate when $f_0>0$ and follows from \eqref{eq002} for $f_0=0$ via 
		\begin{align*}
		\frac{F(f)}{f}\,\nabla\Vert\nabla f\Vert^{2}&=2F(f)\Ric(\nabla f,\cdot).
		\end{align*}
		 Thus $\mathcal{D}$ is $dV$-integrable on a regular neighborhood of $\partial M$. To analyze the behavior of $\mathcal{D}$ towards the asymptotic end of $(M,g,f)$, recall that by the asymptotic decay established in \Cref{lem:asymptotics}, we know that $f$ has no critical points in a suitable neighborhood of infinity. This means we can choose a compact subset $K\subseteq M$ such that $(M\setminus K)\cap\crit=\emptyset$ and a diffeomorphism $x \colon M \setminus K \longrightarrow \R^n \setminus \overline{B}$ making $(M,g,f)$ asymptotically flat. The asymptotic assumption \eqref{eq:f-asymp} implies that
		\begin{align*}
			F(f)&=F(f_{m})+o(\vert x\vert^{(n-1)(p-1)-(n-2)}),\\
			F'(f)&=F'(f_{m})+o(\vert x\vert^{(n-1)(p-1)}),\\
			G(f)&=G(f_{m})+o(\vert x\vert^{(n-1)(p-1)}),\\
			G'(f)&=G'(f_{m})+o(\vert x\vert^{(n-1)(p-1)+2(n-2)})
		\end{align*}
		as $\vert x\vert\to\infty$, which can be verified most easily via the ODEs for $F$ and $G$ printed in the proof of \Cref{maintheoremp}. To study the asymptotics of the first term in $\mathcal{D}$, we apply the Bochner formula and the static vacuum equation \eqref{vacuum equation2} to see that
		\begin{align}\label{eq:Bochner}
			\frac{1}{2}\Delta \Vert\nabla f\Vert^{2}&=\Vert \nabla^{2}f\Vert^{2}+\Ric(\nabla f,\nabla f)
		\end{align}
		on $M$, as otherwise we would have to deal with the asymptotic behavior of third derivatives of $f$ about which we have not made any assumptions. Doing so, we find that
		\begin{align*}
			&\diver\left(\frac{F(f)}{f}\|\nabla f\|^{p-3}\,\nabla\Vert\nabla f\Vert^{2}\right)\\
			&=\left(\frac{F'(f)}{f}-\frac{F(f)}{f^{2}}\right)\|\nabla f\|^{p-3}\,\langle\nabla f,\nabla\Vert\nabla f\Vert^{2}\rangle+\frac{p-3}{2}\frac{F(f)}{f}\|\nabla f\|^{p-5}\Vert\nabla\Vert\nabla f\Vert^{2}\Vert^{2}\\
			&\quad+\frac{2F(f)}{f}\|\nabla f\|^{p-3}\,\left(\Vert\nabla^{2}f\Vert^{2}+\Ric(\nabla f,\nabla f)\right),
		\end{align*}
		on $M\setminus K$. Taken together with \Cref{lem:asymptotics}, we find
		\begin{align*}
			\mathcal{D}&=\mathcal{D}_{m}+o(\vert x\vert^{-n})=o(\vert x\vert^{-n})
		\end{align*}
		as $\vert x\vert\to\infty$. Here, $\mathcal{D}_{m}$ denotes the divergence $\mathcal{D}$ for $f=f_{m}$ and $g=g_{m}$ and we are using that we have seen that $\mathcal{D}_{m}=0$ at the end of \Cref{sec:proofs}. Hence by~\eqref{asyvolint}, $\mathcal{D}$ is $dV$-integrable on $M\setminus K$. It remains to study the $dV$-integrability of $\mathcal{D}$ near $\crit$. As the divergence theorem readily applies when $\crit=\emptyset$, we will assume without loss of generality that $\crit\neq\emptyset$.
		
To establish $dV$-integrability near $\crit$, let us first recall from the work of Cheeger--Naber--Valtorta~\cite{Cheeger} and Hardt--Hoffmann-Ostenhof--Hoffmann-Ostenhof--Nadirashvili \cite{Hardt} that $\crit$ is a set of Hausdorff dimension at most $n-2$ as $f$ is harmonic. Now note that $dV$ is absolutely continuous with respect to the $n$-dimensional Hausdorff measure $\mathcal{H}^{n}$ and vice versa, with bounded densities, respectively, by \eqref{asyvolint}. This gives $dV(\crit)=0$. Setting
\begin{align}
W_{\varepsilon}\definedas\{\Vert\nabla f\Vert^{2}<\varepsilon\}
\end{align}
for all $\varepsilon>0$, it readily follows that $\crit\subset W_{\varepsilon}$, $W_{\varepsilon}\subset M$ is open, and $W_{\varepsilon}\cap\partial M=\emptyset$ for suitably small $0<\varepsilon<\mu$. Moreover, it follows from purely topological arguments\footnote{See \cite[page 15]{Anabel} for details.} as well as from the already established fact that $\crit$ is compact that,  for suitably small $0<\varepsilon<\overline{\mu}$, $W_{\varepsilon}$ has finitely many connected components which are all bounded except precisely one which is a neighborhood of infinity. Next, we observe that $\partial W_{\varepsilon}=\{\Vert\nabla f\Vert^{2}=\varepsilon\}$ is closed and satisfies $\partial W_{\varepsilon}\cap\crit=\emptyset$ for all $\varepsilon>0$.

It is well-known that static vacuum systems are real analytic in suitable coordinate systems (see e.g.\ \cite{analytic}), hence both $f$ and $\Vert\nabla f\Vert^{2}$ are real analytic functions on $M$. From the Morse--Sard theorem \cite[Theorem 1]{Soucek1972}, we can then deduce that $f(\crit)$ is finite and that $\Vert\nabla f\Vert^{2}(\operatorname{Crit}\Vert\nabla f\Vert^{2})$ is discrete. Moreover, we know that $\crit\subseteq\operatorname{Crit}\Vert\nabla f\Vert^{2}$ because any critical point of $f$ is a local minimum of $\Vert\nabla f\Vert^{2}$. Hence $0\in \operatorname{Crit}\Vert\nabla f\Vert^{2}$ as we have assumed $\crit\neq\emptyset$ so that, by discreteness of $\operatorname{Crit}\Vert\nabla f\Vert^{2}$, there must be a threshold $\delta>0$ such that
\begin{align*}
\Vert\nabla f\Vert^{2}\geq\delta
\end{align*}
on $\operatorname{Crit}\Vert\nabla f\Vert^{2}\setminus\crit$. Then, for $0<\varepsilon<\delta$, the implicit function theorem applied to $\Vert\nabla f\Vert^{2}$ asserts that $\partial W_{\varepsilon}$ must be a smooth hypersurface with multiple but finitely many components.

Now let $U\subseteq M$ be an open domain with smooth boundary $\partial U$ such that $U\supset\crit$, $\overline{U}\cap\partial M=\emptyset$, and $\overline{U}\subset K$, with $K$ the compact subset of $M$ defined above. We can then extend $\mathcal{D}$ to $U$ (and thus to $M\cup\partial M$ in combination with the above) by setting $\mathcal{D}\definedas0$ on $\crit$ and obtain $dV$-measurability of $\mathcal{D}$ on $U$ (and thus on $M\cup\partial M$) from the fact that $\crit$ has $dV$-measure zero. To prove $dV$-integrability of $\mathcal{D}$ on $U$ (and thus on $M$), we introduce the abbreviation
\begin{align*}
Z\definedas\frac{F(f)}{f}\|\nabla f\|^{p-3}\,\nabla\|\nabla f\|^{2}+G(f)\|\nabla f\|^{p-1}\,\nabla f
\end{align*}
on $U\setminus\crit$ and extend $Z$ by $0$ on $\crit$ so that $Z$ is $dV$-measurable on $U$, again because $\crit$ has $dV$-measure zero. To study the $dV$-integrability of $\mathcal{D}$ on $U$, we want to apply the monotone convergence theorem. Let us consider a smooth cut-off function $\xi\colon [0,\infty)\to [0,1]$ satisfying
\begin{align*}
\begin{cases}
\xi(t)=0 &\text{if } t\leq \frac{1}{2},\\
\xi(t)=1&\text{if } t\geq \frac{3}{2},\\
0<\dot{\xi}(t)<2 &\text{if } \frac{1}{2}<t<\frac{3}{2}.
\end{cases}
\end{align*}
For $\varepsilon>0$, we define $\xi_{\varepsilon}\colon [0,\infty)\to [0,1]$ by setting $\xi_{\varepsilon}(t)\definedas \xi(\frac{t}{\varepsilon})$ and observe that
\begin{align*}
\begin{cases}
\xi_{\varepsilon}(t)=0 &\text{if } t\leq \frac{\varepsilon}{2},\\
\xi_{\varepsilon}(t)=1&\text{if } t\geq \frac{3\varepsilon}{2},\\
0<\dot{\xi}_{\varepsilon}(t)<\frac{2}{\varepsilon} &\text{if } \frac{\varepsilon}{2}<t<\frac{3\varepsilon}{2},\\
\xi_{\varepsilon_0}\leq \xi_{\varepsilon_1} &\text{if } 0<\varepsilon_{1}<\varepsilon_{0},\\
\xi_{\varepsilon}\to1&\text{as }\varepsilon\to0.
\end{cases}
\end{align*}
Next, we cut off $\Vert \nabla f\Vert^{2}$ near $\crit$ or in other words analyze the function $\Theta_{\varepsilon}\colon\R^n\setminus \Omega\to[0,1]$
\begin{align*}
\Theta_{\varepsilon}\definedas\xi_{\varepsilon}\circ \Vert\nabla f\Vert^2,
\end{align*}
with $\supp \Theta_{\varepsilon}\subseteq \overline{W}_{\frac{3\varepsilon}{2}}\setminus W_{\frac{\varepsilon}{2}}$. We consider a strictly decreasing sequence $\{\varepsilon_{k}\}_{k\in\N}$ with $\varepsilon_{k}>0$ satisfying
$\frac{3\varepsilon_{k}}{2}<\min\{\delta,\mu,\overline{\mu}\}$ for all $k\in\N$ and $\varepsilon_{k}\to0$ as $k\to\infty$. With this choice of $\{\varepsilon_{k}\}_{k\in\N}$, $\{\Theta_{\varepsilon_{k}}\}_{k\in\N}\subset L^{1}(U,dV)$ is an increasing sequence, and we have $\Theta_{\varepsilon_{k}}\to1$ pointwise on $U$ as $k\to\infty$. 

We compute
\begin{align*}
&\diver(\Theta_{\varepsilon_{k}}Z)\\
&=\underbrace{\left(\dot{\xi}_{\varepsilon_{k}}\circ\Vert\nabla f\Vert^2\right)\left[\frac{F(f)}{f}\Vert\nabla f\Vert^{p-3}\Vert\nabla\Vert\nabla f\Vert^2\Vert^2+G(f)\Vert\nabla f\Vert^{p-1} g(\nabla f,\nabla\Vert\nabla f\Vert^2)\right]}_{=:\, \mathcal{A}_{k}}+\underbrace{\vphantom{\left[\frac{\beta}{2}\right]}\Theta_{\varepsilon_{k}} \diver Z}_{=:\, \mathcal{B}_{k}}
\end{align*}
on $U$ for all $k\in\N$. For $\mathcal{B}_{k}$, we note that as $\Theta_{\varepsilon_{k}}$ vanishes near $\crit$, $\mathcal{B}_{k}\in L^{1}(U,dV)$ for all $k\in\N$. Hence, by the monotone convergence theorem and using that $\diver Z\geq0$ $dV$-almost everywhere on $U$ by \Cref{maintheoremp} because $dV(\crit)=0$, we find that
\begin{equation*}
\int_{U}\mathcal{B}_{k}\,dV=\int_{U}\Theta_{\varepsilon_{k}} \diver( Z)\, dV\to \int_{U} \diver( Z)\, dV\in\R^{+}_{0}\cup\{\infty\}
\end{equation*}
as $k\to\infty$. For $\mathcal{A}_{k}$, note that as $\xi_{\varepsilon_{k}}$ vanishes near $\crit$ we know that $\mathcal{A}_{k}\in L^{1}(U,dV)$ for all $k\in\N$. Now observe that $\supp\mathcal{A}_{k}\subseteq\overline{W}_{\frac{3\varepsilon_{k}}{2}}\setminus W_{\frac{\varepsilon_{k}}{2}}$ for all $k\in\N$. Also, all involved quantities are continuous on $U\cap \left(\overline{W}_{\frac{3\varepsilon_{k}}{2}}\setminus W_{\frac{\varepsilon_{k}}{2}}\right)$ which informs us that the map 
\begin{align*}
s\mapsto\int_{U \cap \partial W_s}\left(\dot{\xi}_{\varepsilon_{k}}\circ\Vert\nabla f\Vert^2\right)\frac{F(f)}{f}\Vert\nabla f\Vert^{p-3}\Vert\nabla\Vert\nabla f\Vert^2\Vert^{2}\,dS
\end{align*} is non-negative and $dV$-integrable on $[\tfrac{\varepsilon_{k}}{2},\tfrac{3\varepsilon_{k}}{2}]$ for all $k\in\N$ and all $p>p_n$. As $\Vert \nabla\Vert \nabla f\Vert^2\Vert^{2}$ {is smooth on the compact set $\overline{U}$}, the coarea formula applies (see e.g. \cite[Theorem 5]{Evans.2022}). Using the Cauchy--Schwarz inequality, the coarea formula, and the mean value theorem for integrals on intervals, we compute
\begin{align*}
&\int_{U}\left\vert\mathcal{A}_{k}\right\vert dV\\
&\leq \int_{U\cap \left(\overline{W}_{\frac{3\varepsilon_{k}}{2}}\setminus W_{\frac{\varepsilon_{k}}{2}}\right)}\left(\dot{\xi}_{\varepsilon_{k}}\circ\Vert\nabla f\Vert^2\right)\left[\frac{F(f)}{f}\Vert\nabla f\Vert^{p-3}\Vert\nabla\Vert\nabla f\Vert^2\Vert^2+2\vert G(f)\vert \Vert\nabla f\Vert^{p+1} \Vert\nabla^2f\Vert \right]\!dV\\
&=\int_{\frac{\varepsilon_{k}}{2}}^{\frac{3\varepsilon_{k}}{2}} \left(\,\int_{U \cap \partial W_s}\left(\dot{\xi}_{\varepsilon_{k}}\circ\Vert\nabla f\Vert^2\right)\frac{F(f)}{f}\Vert\nabla f\Vert^{p-3}\Vert\nabla\Vert\nabla f\Vert^2\Vert\, dS\!\right)\!ds\\
&\quad+2\int_{U\cap \left(\overline{W}_{\frac{3\varepsilon_{k}}{2}}\setminus W_{\frac{\varepsilon_{k}}{2}}\right)}\left(\dot{\xi}_{\varepsilon_{k}}\circ\Vert\nabla f\Vert^2\right)\vert G(f)\vert \Vert\nabla f\Vert^{p+1}\Vert\nabla^2f\Vert\,dV\\
&=\int_{\frac{\varepsilon_{k}}{2}}^{\frac{3\varepsilon_{k}}{2}} \left(\dot{\xi}_{\varepsilon_{k}}(s)s^{\frac{p-3}{2}}\int_{U \cap \partial W_s}\frac{F(f)\Vert\nabla\Vert\nabla f\Vert^2\Vert}{f}\,dS\!\right)\!ds\\
&\quad+2\int_{U\cap \left(\overline{W}_{\frac{3\varepsilon_{k}}{2}}\setminus W_{\frac{\varepsilon_{k}}{2}}\right)}\left(\dot{\xi}_{\varepsilon_{k}}\circ\Vert\nabla f\Vert^2\right)\vert G(f)\vert \Vert\nabla f\Vert^{p+1}\Vert\nabla^2f\Vert\,dV
\end{align*}
\begin{align*}
&\leq \frac{2}{\varepsilon_{k}}\max_{\overline{U}}(F(f))\int_{\frac{\varepsilon_{k}}{2}}^{\frac{3\varepsilon_{k}}{2}} s^{\frac{p-3}{2}}\left(\,\int_{U \cap \partial W_s}\!\frac{\Vert\nabla\Vert\nabla f\Vert^2\Vert}{f}\,dS\!\right)\! ds+\frac{4}{\varepsilon_{k}}\max_{\overline{U}}\left(|G(f)|\Vert\nabla^2f\Vert\right) \vert U\vert\left(\frac{3\varepsilon_k}{2}\right)^\frac{p+1}{2}\\
&= 2 r_{k}^{\frac{p-3}{2}}\max_{\overline{U}}(F(f))\int_{U \cap \partial W_{r_{k}}}\!\frac{\Vert\nabla\Vert\nabla f\Vert^2\Vert}{f}\,dS+ \underbrace{\left[\frac{3^\frac{p+1}{2}}{2^\frac{p-3}{2}}\max_{\overline{U}}\left(|G(f)|\Vert\nabla^2f\Vert\right) \vert U\vert\right]}_{=:\, D}\varepsilon_{k}^{\frac{p-1}{2}}\\
&\leq 2\max_{\overline{U}}(F(f)) \underbrace{r_{k}^{\frac{p-3}{2}}\int_{\overline{U}\cap\partial W_{r_{k}}}\!\frac{\Vert\nabla\Vert\nabla f\Vert^2\Vert}{f}\,dS}_{=:\,\mathcal{C}_{k}}+\, D\underbrace{\vphantom{\int_{\overline{U}\cap\partial W_{r_{k}}}\Vert\nabla\Vert\nabla f\Vert^2\Vert\,dS}\varepsilon_{k}^{\frac{p-1}{2}}}_{=:\,\mathcal{D}_{k}}
\end{align*}
for some $r_{k}\in(\frac{\varepsilon_{k}}{2},\frac{3\varepsilon_{k}}{2})$ and all $k\in\N$. Here, $\vert U\vert$ denotes the (finite) $dV$-volume of $U$. Clearly, $\mathcal{D}_{k}\to0$ as $k\to\infty$ as $p\geq p_n>1$ {recalling that $\varepsilon_k\to0$ as $k\to\infty$}. We will now show that $\mathcal{C}_{k}\to0$ as $k\to\infty$, asserting by the above that 
\begin{equation}\label{eq:divconvergence}
\int_{U}\diver\left(\Theta_{\varepsilon_{k}}Z\right)dV\to\int_{U}\diver Z\,dV\in\R^{+}\cup\{\infty\}
\end{equation}
as $k\to\infty$. For reasons that will become clear later, we first assume $p>p_{n}$ and will handle the case $p=p_{n}$ separately towards the end of the proof. To analyze $\mathcal{C}_{k}$, we set 
\begin{equation*}
\rho_{U}\definedas\min\left\{\min_{\partial U}{\Vert \nabla f\Vert^{2}},\mu,\overline{\mu},\delta\right\}>0
\end{equation*}
and choose $k_{0}=k_{0}(U,g,f)\in\N$ such that $\frac{3\varepsilon_{k}{2}}<\rho_{U}$ for all $k\geq k_{0}$. This in particular asserts that $r_{k}<\rho_{U}$ for all $k\geq k_{0}$. By definition of $\rho_{U}$, we find that $\partial U\cap \overline{W_{r}}=\emptyset$ and thus $\partial(U \cap W_{r})=U \cap \partial W_{r}$ for all $0<r<\rho_{U}$.  With this in mind, let us anaylze the auxiliary function $\zeta\colon(0,\rho_{U})\to\R$ defined by
\begin{equation*}
\zeta(r)\definedas\int_{U \cap\, {\partial W_{r}}}\frac{\Vert \nabla\Vert\nabla f\Vert^2\Vert}{f}\,dS.
\end{equation*}
Clearly, we have $\zeta\in L^{\infty}(0,\rho_{U})\subset L^{1}(0,\rho_{U})$ as $\Vert \nabla f\Vert^{2}$ is continuous on $M$ and $\overline{U}\subset M$ is compact. Recall that we have asserted above that $\partial(U \cap W_{r})$ is a smooth hypersurface with finitely many components. Thus, applying the divergence theorem, the static vacuum equation \eqref{vacuum equation1}, and the Bochner formula \eqref{eq:Bochner}, we get
\begin{align*}
\zeta(r)&=\int_{\partial(U \cap W_{r})}g\left(\frac{\nabla\Vert \nabla f\Vert^2}{f},\frac{\nabla\Vert \nabla f\Vert^2}{\Vert\nabla\Vert \nabla f\Vert^2\Vert}\right)dS=\int_{U \cap W_r}\diver\left(\frac{ \nabla\Vert \nabla f\Vert^2}{f}\right)dV\\
&=\int_{U \cap W_r}\left(\frac{\Delta \Vert\nabla f\Vert^2}{f}-\frac{g\left(\nabla\Vert \nabla f\Vert^2,\nabla f\right)}{f^{2}}\right) dV=\int_{U \cap W_r}\left(\frac{\Delta \Vert\nabla f\Vert^2-2\Ric(\nabla f,\nabla f)}{f}\right) dV\\
&=2\int_{U \cap W_r}\frac{\Vert \nabla^2f\Vert^2}{f}\, dV
\end{align*} 
for all $0<r<\rho_{U}$. Applying the coarea formula, we find
\begin{equation*}
\zeta(\overline{r})-\zeta(r)=2\int_{r}^{\overline{r}} \left(\,\int_{U\cap \partial W_{s}} \frac{\Vert\nabla^2f\Vert^2}{f\Vert\nabla\Vert \nabla f\Vert^2\Vert} \,dS\! \right)\!ds
\end{equation*}
for all $0<r\leq \overline{r}<\rho_{U}$ because $\Vert \nabla\Vert \nabla f\Vert^{2}\Vert$ is bounded from below by a positive constant on $\overline{U} \cap (\overline{W_{\overline{r}}}\setminus W_{r})$ and thus $\frac{\Vert \nabla^{2}f\Vert^{2}}{f\Vert \nabla\Vert \nabla f\Vert^{2}\Vert}\in L^{\infty}(U \cap (\overline{W_{\overline{r}}}\setminus W_{r}))\subset L^{1}(U \cap (\overline{W_{\overline{r}}}\setminus W_{r}))$. Similarly, appealing in addition to the fundamental theorem of calculus in the Sobolev space $W^{1,1}(\tau,\rho_{U})$, we have $\zeta\in W^{1,1}(\tau,\rho_{U})$ for any fixed $0<\tau<\rho_{U}$ with weak derivative  
\begin{align*}
\zeta'(r)=2\int_{U \cap \partial W_{r}}\frac{\Vert\nabla^2f\Vert^2}{f\Vert\nabla\Vert \nabla f\Vert^2\Vert}\,dS
\end{align*}
for almost all $\tau<r<\rho_{U}$. The $1$-dimensional Sobolev embedding theorem then gives us that $\zeta$ is continuous on $(\tau,\rho_{U})$ for all $0<\tau<\rho_{U}$ and hence continuous on $(0,\rho_{U})$. The refined Kato inequality (see e.g.\ \cite{SchoenA}) implies that
\begin{align}\label{kato}
\Vert\nabla ^2 f\Vert^2\geq\frac{n}{n-1}\Vert\nabla\Vert\nabla f\Vert\Vert^2
\end{align}
 on $U\setminus\crit$. Thus
\begin{align*}
\zeta'(r)\geq \frac{2n}{n-1} \int_{U \cap \partial W_{r}}\frac{\Vert\nabla\Vert \nabla f\Vert\Vert^2}{f\Vert\nabla\Vert \nabla f\Vert^2\Vert}\,dS=\frac{n}{2(n-1)}\frac{\zeta(r)}{r}
\end{align*}
for almost all $\tau<r<\rho_{U}$, using that $\Vert \nabla\Vert \nabla f\Vert\Vert=\frac{\Vert \nabla \Vert \nabla f\Vert^{2}\Vert}{2\Vert \nabla f\Vert }$  and $\left(U \cap \partial W_{r}\right)\cap\crit=\emptyset$. As $0<\tau<\rho_{U}$ is arbitrary, this is equivalent to
\begin{equation*}
(\ln\circ\,\zeta)'(r)\geq \frac{n}{2(n-1)}\ln'(r)
\end{equation*}
for almost all $0<r<\rho_{U}$. Picking a fixed $0<R<\rho_{U}$ for which this inequality holds, this integrates to 
\begin{align*}
\zeta(r)\leq \frac{\zeta(R)}{R^{\frac{n}{2(n-1)}}}\,r^{\frac{n}{2(n-1)}}
\end{align*}
for all $0<r<R$ by continuity of $\zeta$. Hence
\begin{equation}\label{inequ:zeta}
0<r^{\frac{p-3}{2}}\zeta(r)\leq \frac{\zeta(R)}{R^{\frac{n}{2(n-1)}}}\,r^{\frac{p-p_{n}}{2}}
\end{equation}
holds for all $0<r<R$. For $p>p_{n}$, the exponent of $r$ on the right hand side of \eqref{inequ:zeta} is strictly positive so that $\mathcal{C}_{k}=r_{k}^{\frac{p-3}{2}}\zeta(r_{k})\to0$ as $k\to\infty$. This proves \eqref{eq:divconvergence} for $p>p_{n}$. 

Consider now the surface integral term
\begin{equation*}
\int_{\partial U}g(\Theta_{\varepsilon_{k}}Z,\eta)\,dS,
\end{equation*}
where $\eta$ denotes the unit normal to $\partial U$ pointing to the outside of $U$. Recalling that $\partial U\cap\crit=\emptyset$, we know that $Z$ is continuous on $\partial U$ and hence by compactness of $\partial U$ and Lebesgue's dominated convergence theorem, we have
\begin{equation*}
\int_{\partial U}g(\Theta_{\varepsilon_{k}}Z,\eta)\,dS\to \int_{\partial U}g(Z,\eta)\,dS\in\R
\end{equation*}
as $k\to\infty$. Together with \eqref{eq:divconvergence} and applying the divergence theorem to $\Theta_{\varepsilon_{k}}Z$ on $U$, this establishes $dV$-integrability of $\mathcal{D}=\diver Z$ on $U$ and hence on $M$ when $p>p_{n}$. Moreover, denoting $Z$ by $Z_{p}$ to be able to distinguish the above results for different $p>p_{n}$, we have asserted that
\begin{align}\label{eq:divTHM}
\int_{U}\diver Z_{p}\,dV=\int_{\partial U} g(Z_{p},\eta)\,dS
\end{align}
for all $p>p_{n}$. To conclude that $\mathcal{D}=\diver Z$ is $dV$-integrable for $p=p_{n}$, let us consider a strictly decreasing sequence $\{p_{l}\}_{l\in\N}$ with $p_{l}>p_{n}$ and $p_l\to p_{n}$ as $l\to\infty$. Using again that $\partial U\cap\crit=\emptyset$, we find that $Z_{p_{l}}\to Z_{p_{n}}$ on $\partial U$ as $l\to\infty$. As $\{Z_{p_{l}}\}_{l\in\N}$ is uniformly bounded on the compact set $\partial U$ by continuity, Lebesgue's dominated convergence theorem informs us that
\begin{equation*}
\int_{\partial U} g(Z_{p_{l}},\eta)\,dS\to\int_{\partial U} g(Z_{p_{n}},\eta)\,dS
\end{equation*}
as $l\to\infty$. Now recall that $dV(\crit)=0$ and note that this gives $Z_{p_{l}}\to Z_{p_{n}}$ pointwise $dV$-almost everywhere as $l\to\infty$. Splitting $U$ into $U\cap W_{1}$ and $U\setminus W_{1}$, Lebesgue's dominated convergence theorem tells us that 
\begin{align*}
\int_{U\setminus W_{1}}\diver Z_{p_{l}}\,dV \to \int_{U\setminus W_{1}}\diver Z_{p_{n}}\,dV\in\R^{+}_{0}
\end{align*}
as $l\to\infty$. On $U\cap W_{1}$, we rewrite \eqref{mainformulap} as
\begin{align*}
\diver Z_{p_{l}}&=\underbrace{\Vert\nabla f\Vert^{p_{l}-5}\,\frac{(n-2)^{2}F(f)f}{(n-1)^{2}}\|T\|^{2}}_{=:\mathcal{E}_{l}}\\
&\quad+(p_{l}-p_{n})\underbrace{\Vert\nabla f\Vert^{p_{l}-5}\,\frac{F(f)}{2f}\left\|\nabla\|\nabla f\|^{2}+\frac{4(n-1)}{(n-2)}\frac{f\|\nabla f\|^{2}\,\nabla f}{1-f^{2}}\right\|^{2}}_{=:\mathcal{F}_{l}}
\end{align*}
and note that $\{\mathcal{E}_{l}\}_{l\in\N}$, $\{\mathcal{F}_{l}\}_{l\in\N}$ are non-negative sequences of $dV$-measurable functions on $U\cap W_{1}$ by \Cref{maintheoremp} and because $p_{l}> p_{n}$ and $F(f)\geq0$. Moreover, both $\{\mathcal{E}_{l}\}_{l\in\N},\{\mathcal{F}_{l}\}_{l\in\N}$ are monotonically increasing sequences on $U\cap W_{1}$ as
\begin{align*}
\frac{\partial \Vert \nabla f\Vert^{p-5}}{\partial p}=\ln(\Vert \nabla f\Vert)\Vert \nabla f\Vert^{p-5}<0
\end{align*}
holds for all $p\in\R$ $dV$-almost everywhere on $U\cap W_{1}$ {as $\Vert\nabla f\Vert<1$ on $W_1$}. By the monotone convergence theorem, we obtain\footnote{We would like to remark that we cannot conclude that the term involving $(p_{l}-p_{n})$ vanishes in the limit $l\to\infty$ as $\lim_{l\to\infty}\int_{U\cap W_{1}}\mathcal{F}_{l}\,dV$ may be infinite. This causes no issues as $p_{l}>p_{n}$ and $\mathcal{E}_{l},\mathcal{F}_{l}\geq0$.}
\begin{align*}
\int_{U\cap W_{1}}\diver Z_{p_{l}}\,dV=\int_{U\cap W_{1}}\mathcal{E}_{l}\,dV+(p_{l}-p_{n})\int_{U\cap W_{1}}\mathcal{F}_{l}\,dV\to \int_{U\cap W_{1}}\diver Z_{p_{n}}\,dV\in\R^{+}_{0}\cup\{\infty\}
\end{align*}
as $l\to\infty$. By \eqref{eq:divTHM}, we can thus deduce that $\mathcal{D}=\diver Z$ is $dV$-integrable on $U$ and thus on $M$ also for $p=p_{n}$ as claimed.

We now would like to apply the divergence theorem to the vector field $Z$ on $M$ for all $p\geq p_{n}$. As we have asserted the $dV$-integrability of $\mathcal{D}=\diver Z$ and know that $Z$ is smooth near $\partial M$ and in a neighborhood of infinity and because $(M,g)$ is geodesically complete up to $\partial M$ by \Cref{rem:complete}, the divergence theorem applies. It only remains to study the ``boundary integral at infinity'' and to evaluate the surface integral at the inner boundary. To this end, let $r>0$ be such that $B_r \definedas\left\{ x \in \R^n \,:\, |x|<r \right\}\supset \overline{B}$ and thus $x^{-1}(\R^{n}\setminus B_{r})\subseteq M$, where $x$ denotes the asymptotically flat chart and $\overline{B}$ the complement of the image of $x$ in $\R^{n}$. From the divergence theorem, our choice of unit normal $\nu$ pointing towards infinity, and \eqref{eq002}, we obtain
		\begin{align*}
			&\int_{M\setminus x^{-1}(\R^{n}\setminus B_{r})}\mathcal{D}\,dV\\
			&\quad=\int_{x^{-1} (\partial B_r)}\left(\frac{F(f)}{f}\Vert\nabla f\Vert^{p-3}\langle\nabla\|\nabla f\|^{2},\nu\rangle+G(f)\|\nabla f\|^{p-1}\langle\nabla f,\,\nu\rangle\right) dS\\
			&\quad\quad-\int_{\partial M}\left(\frac{F(f)}{f}\Vert\nabla f\Vert^{p-3}\langle\nabla\|\nabla f\|^{2},\nu\rangle+G(f)\|\nabla f\|^{p-1}\langle\nabla f,\,\nu\rangle\right)dS.
		\end{align*} 
		Exploiting \Cref{lem:asymptotics} and the above asymptotics for $F(f)$ and $G(f)$, we find
		\begin{align*}
			&\int_{x^{-1} (\partial B_r)}\left(\frac{F(f)}{f}\Vert\nabla f\Vert^{p-3}\langle\nabla\|\nabla f\|^{2},\nu\rangle+G(f)\|\nabla f\|^{p-1}\langle\nabla f,\,\nu\rangle\right) dS\\
			&=\int_{x^{-1} (\partial B_r)}\left(\frac{F(f_{m})}{f_{m}}\Vert\nabla_{m} f_{m}\Vert_{m}^{p-3}\langle\nabla_{m}\|\nabla_{m} f\|_{m}^{2},\nu_{m}\rangle_{m}+G(f_{m})\|\nabla_{m} f_{m}\|_{m}^{p-1}\langle\nabla_{m} f_{m},\,\nu_{m}\rangle_{m}\right) dS_{\delta}\\
			&\quad+o(1)\\
			&=-\mathcal{F}^{\,c,d}_{\!p}(m)+o(1)
		\end{align*}
		as $r\to\infty$, where $\langle\cdot,\cdot\rangle_{m}=g_{m}$ and $\nu_{m}$ denotes the unit normal to $x^{-1}(\partial B_{r})$ with respect to $g_{m}$ and pointing to infinity. For the inner boundary integral, we recall from \Cref{rmk:lapse-asymp} that $\nu=\frac{\nabla f}{\Vert\nabla f\Vert}$, $\kappa=\Vert\nabla f\Vert$ if $f_{0}\in[0,1)$ and $\nu=-\frac{\nabla f}{\Vert\nabla f\Vert}$, $\kappa=-\Vert\nabla f\Vert$ if $f_{0}\in(1,\infty)$. Exploiting that $f=f_{0}$ and $\kappa$ are constant on $\partial M$ by assumption, we compute
		\begin{align*}
			&\int_{\partial M}\left(\frac{F(f)}{f}\Vert\nabla f\Vert^{p-3}\langle\nabla\|\nabla f\|^{2},\nu\rangle+G(f)\|\nabla f\|^{p-1}\langle\nabla f,\,\nu\rangle\right)dS\\
			&=\pm\frac{F_{0}}{f_{0}}\vert\kappa\vert^{p-4}\int_{\partial M}\langle\nabla\Vert\nabla f\Vert^{2},\nabla f\rangle dS\pm G_{0}\vert\kappa\vert^{p}\vert\partial M\vert,
		\end{align*}
		with $\pm=+$ if $f_{0}\in[0,1)$ and $\pm=-$ if $f_{0}\in(1,\infty)$, respectively. Using \eqref{vacuum equation1} and \eqref{scal0} as well as the Gau{\ss} equation, we compute
		\begin{align*}
			&\langle\nabla\Vert\nabla f\Vert^{2},\nabla f\rangle=2\nabla^{2}f(\nabla f,\nabla f)=2f_{0}\Ric(\nabla f,\nabla f)\\
			&\quad=2f_{0}\kappa^{2}\Ric(\nu,\nu)=-f_{0}\kappa^{2}\left(\scal_{\partial M}-\tfrac{n-2}{n-1} \HH^2+\Vert\mathring{h}\Vert^{2}\right).
		\end{align*}
		Combining this with the above, we find
		\begin{align*}
			&\int_{\partial M}\left(\frac{F(f)}{f}\Vert\nabla f\Vert^{p-3}\langle\nabla\|\nabla f\|^{2},\nu\rangle+G(f)\|\nabla f\|^{p-1}\langle\nabla f,\,\nu\rangle\right)dS\\
			&=\mp F_{0}\vert\kappa\vert^{p-2}\int_{\partial M}\left(\scal_{\partial M}-\tfrac{n-2}{n-1} \HH^2+\Vert\mathring{h}\Vert^{2}\right) dS\pm G_{0}\vert\kappa\vert^{p}\vert\partial M\vert
		\end{align*}
		and thus
		\begin{align*}
			{0\leq}\int_{M} \mathcal{D}\,dV&=-\mathcal{F}^{\,c,d}_{\!p}(m)\pm F_{0}\vert\kappa\vert^{p-2}\int_{\partial M}\left(\scal_{\partial M}-\tfrac{n-2}{n-1} \HH^2+\Vert\mathring{h}\Vert^{2}\right)dS\mp G_{0}\vert\kappa\vert^{p}\,\vert\partial M\vert.
		\end{align*}
		Consequently, for $c,d\in\R$ satisfying $c+d\geq0$, $cf_{0}^{2}+d\geq0$, and for all $p\geq p_{n}$, we find~\eqref{mainformulap2} and \eqref{mainformulap2b}. Equality holds in~\eqref{mainformulap2} or in \eqref{mainformulap2b} if and only if equality holds in \eqref{mainformulap0} and thus $\mathcal{D}=0$ on $M$. Hence if $c+d\geq0$, $cf_{0}^{2}+d\geq0$ (but not $c=d=0$), vanishing of both sides in \eqref{mainformulap} gives $T=0$ {and, if $p>p_{n}$, also \eqref{mainformulap3}. By \Cref{thm:rigidity}, this implies the equality assertion of \Cref{maintheoremp2}}. 
	\end{proof}
	
	Let us now discuss the geometric implications of \eqref{mainformulap2} and \eqref{mainformulap2b} or in other words prove \Cref{maintheoremp3} and, in passing, its equivalence to \Cref{maintheoremp2}, {see also \Cref{Coro:p>1}}. 
	\begin{proof}[Proof of \Cref{maintheoremp3}]
		We begin by choosing $f_{0}\in[0,1)$, recalling that $\kappa>0$ and $m>0$ in this case. Choosing the admissible constants $c=1$, $d=-f_{0}^{2}$ in \eqref{mainformulap2} {and any $p\geq p_{n}$}, we find from \Cref{Smarr} that $\kappa=(n-2)\frac{\vert\mathbb{S}^{n-1}\vert}{\vert\partial M\vert} \,m$ and thus
		\begin{align}\label{mgeq}
			m \geq \frac{(1-f_0^2)\left(s_{\partial M}\right)^{n-2}}{2}{>0},
		\end{align}
		{asserting the right hand side inequality in \eqref{maininequality2h}.} Choosing instead the admissible constants $c=-1$, $d=1$ {and any $p\geq p_{n}$}, \eqref{mainformulap2} reduces to
		\begin{align}\label{m^2leq}
			(1-f_{0}^{2})\vert\partial M\vert\int_{\partial M} \left(\scal_{\partial M} -  \tfrac{n-2}{n-1} \HH^2+\Vert\mathring{h}\Vert^{2}\right) dS\geq 4(n-1)(n-2)\vert\mathbb{S}^{n-1}\vert^{2}\, m^{2}.
		\end{align}
		{This asserts the left-hand side inequality in \eqref{maininequality2h}} via an algebraic re-arrangement. {Via \Cref{thm:rigidity}, we have hence proved \Cref{maintheoremp3} for $f_{0}\in[0,1)$.}
 On the other hand, as \eqref{mainformulap2} is linear in $c,d$ and the constraints $c+d\geq0$, $cf_{0}^{2}+d\geq0$ are linear as well, the combination of {\eqref{mgeq}, \eqref{m^2leq} asserts \eqref{mainformulap2} for any $p>1$ via the Smarr formula \eqref{eq:Smarr}}. 
				
		Next, let us consider $f_{0}\in(1,\infty)$, recalling that $\kappa,m<0$ in this case. Choosing the admissible constants $c=-1$, $d=f_{0}^{2}$ in \eqref{mainformulap2b} {and any $p\geq p_{n}$}, we again find from \Cref{Smarr} that $\kappa=(n-2)\frac{\vert\mathbb{S}^{n-1}\vert}{\vert\partial M\vert}\,m$ and thus
		\begin{align*}
			m \leq \frac{(1-f_0^2)\left(s_{\partial M}\right)^{n-2}}{2}{<0},
		\end{align*}
		{asserting the right hand side inequality in \eqref{maininequality2bh}.} Choosing instead the admissible constants $c=1$, $d=-1$ {and any $p\geq p_{n}$}, \eqref{mainformulap2} reduces to
		\begin{align*}
			(f_{0}^{2}-1)\vert\partial M\vert\int_{\partial M} \left(\scal_{\partial M} -  \tfrac{n-2}{n-1} \HH^2+\Vert\mathring{h}\Vert^{2}\right) dS\leq -4(n-1)(n-2)\vert\mathbb{S}^{n-1}\vert^{2}\, m^{2}.
		\end{align*}
		{This asserts the left-hand side inequality in \eqref{maininequality2bh}} via an algebraic re-arrangement.  {Via \Cref{thm:rigidity}, we have hence proved \Cref{maintheoremp3} for $f_{0}\in(1,\infty)$}. Again, as \eqref{mainformulap2b} is linear in $c,d$ and the constraints $c+d\geq0$, $cf_{0}^{2}+d\geq0$ are linear as well, the combination of the two above inequalities asserts \eqref{mainformulap2b} {for any $p>1$ via the Smarr formula \eqref{eq:Smarr}.}
	\end{proof}
	
\begin{corollary}[{\Cref{maintheoremp2}} holds for $p>1$]\label{Coro:p>1}
{It follows from the proof of \Cref{maintheoremp3} that \Cref{maintheoremp2} remains valid for $p>1$, with $F_0$ and $G_0$ formally extended to $p\in(1,p_n)$.}
\end{corollary}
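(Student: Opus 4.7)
The approach is to leverage the linearity in $(c,d)$ of every ingredient of \eqref{mainformulap2} together with the two $p$-independent inequalities \eqref{mgeq} and \eqref{m^2leq} already established in the proof of \Cref{maintheoremp3}. By inspection of \eqref{eq:F}, \eqref{eq:G}, and \eqref{eq:Fmcd}, the functions $F$ and $G$ and the constant $\mathcal{F}^{c,d}_{p}(m)$ all depend linearly on $(c,d)$; the admissibility constraints $c+d\geq0$ and $cf_{0}^{2}+d\geq0$ are linear as well, so the admissible set forms a closed convex cone in $\R^{2}$ spanned by two extremal rays: $(1,-f_{0}^{2})$ and $(-1,1)$ when $f_{0}\in[0,1)$, and $(-1,f_{0}^{2})$ and $(1,-1)$ when $f_{0}\in(1,\infty)$. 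It therefore suffices to verify \eqref{mainformulap2} (respectively \eqref{mainformulap2b}) on each extremal ray for every $p>1$, and then to extend by linearity.

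The central step, for $f_{0}\in[0,1)$, is twofold. On the ray $(c,d)=(1,-f_{0}^{2})$, evaluation of \eqref{eq:F} at $t=f_{0}$ gives $F_{0}=0$, so only the $G_{0}\,\kappa^{p}|\partial M|$ term survives on the left of \eqref{mainformulap2}; substituting $\kappa=(n-2)|\mathbb{S}^{n-1}|\,m/|\partial M|$ from Smarr \eqref{eq:Smarr2} and using $|\partial M|=|\mathbb{S}^{n-1}|(s_{\partial M})^{n-1}$, a direct algebraic manipulation reduces the inequality to $\bigl(m/(s_{\partial M})^{n-2}\bigr)^{(n-1)(p-1)/(n-2)}\geq\bigl((1-f_{0}^{2})/2\bigr)^{(n-1)(p-1)/(n-2)}$, which for $p>1$ has positive exponent and is therefore equivalent to \eqref{mgeq}. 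On the ray $(c,d)=(-1,1)$, the relation $c+d=0$ forces $\mathcal{F}^{-1,1}_{p}(m)=0$; an explicit evaluation of $F_{0}$ and $G_{0}$ (writing $\beta\definedas(n-1)(p-1)/(n-2)$, one finds $F_{0}=(1-f_{0}^{2})^{2-\beta}$ and $G_{0}=4(n-1)(1-f_{0}^{2})^{1-\beta}/(n-2)$), followed by Smarr, collapses the inequality to \eqref{m^2leq}. The $f_{0}\in(1,\infty)$ case is handled identically after the appropriate sign-changes.

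The one point requiring care is that the \emph{formal extension} of $F_{0},G_{0}$ to $p\in(1,p_{n})$ must be legitimate; this turns out to be benign, as the factor $|1-t^{2}|^{(n-1)(p-1)/(n-2)-1}$ is smooth and positive at $t=f_{0}\in[0,1)\cup(1,\infty)$ for every real $p$, and the factor $(p-1)^{-1}$ appearing in $G$ is finite precisely when $p>1$. Crucially, the reduction above is purely algebraic via Smarr and never invokes the divergence identity \eqref{mainformulap} for $p<p_{n}$. The rigidity assertion carries over without change, since on either extremal ray, equality in \eqref{mainformulap2} is equivalent via Smarr to equality in one of \eqref{mgeq}, \eqref{m^2leq}, whose rigidity is already secured through \Cref{thm:rigidity} in the proof of \Cref{maintheoremp3}; linearity then propagates this rigidity to any non-trivial admissible $(c,d)$.
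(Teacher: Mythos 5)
Your proposal is correct and follows essentially the same route as the paper: the corollary is obtained exactly by observing that \eqref{mainformulap2} is linear in $(c,d)$, that on the two extremal rays of the admissible cone the inequality reduces via the Smarr formula to the $p$-independent statements \eqref{mgeq} and \eqref{m^2leq} (already proved for some $p\geq p_n$), and that for $p>1$ the exponent $\frac{(n-1)(p-1)}{n-2}$ is positive so the algebraic reduction is reversible. Your explicit evaluations of $F_0$, $G_0$ on the extremal rays and the propagation of rigidity through the conic decomposition are accurate and simply spell out what the paper leaves implicit.
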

	
	We now proceed to proving  \Cref{teoclassifica1} and  \Cref{teoclassifica2}, where we will use \Cref{thm:rigidity} {which was proven} in \Cref{sec:rigidity}.
	
	\begin{proof}[Proof of \Cref{teoclassifica1}]
		To prove \Cref{teoclassifica1}, we consider the implications of \Cref{maintheoremp3} in the setting of \Cref{teoclassifica1}, i.e., if $f_{0}=0$ and $\partial M$ is a connected static horizon. Then we know that $\HH=0$, $\mathring{h}=0$ on $\partial M$ by \Cref{rem:geodesic} and hence \eqref{maininequality2h} reduces to \eqref{maininequality}. Moreover, \eqref{maininequality2h} implies \eqref{ineqA} upon dropping the middle term and squaring. If, in addition, assumption \eqref{CondR} holds then we have equality in \eqref{ineqA} and thus in both inequalities in \eqref{maininequality2h}. {By the equality case assertion in \Cref{maintheoremp3}, this proves \Cref{teoclassifica1}}.
	\end{proof}
	
	\begin{proof}[Proof of \Cref{teoclassifica2}]
		To see that \Cref{teoclassifica2} holds, we consider the implications of \Cref{maintheoremp3} in the setting of \Cref{teoclassifica2}, i.e., if $f_{0}\in(0,1)\cup(1,\infty)$ and $\partial M$ is a connected time-slice of a photon surface and thus in particular has constant scalar curvature $\scal_{\partial M}$, constant mean curvature $\HH$, is totally umbilic ($\mathring{h}=0$) and obeys the photon surface constraint~\eqref{eq:photo}. When $f_{0}\in(0,1)$, \eqref{maininequality2h} gives \eqref{maininequality2}. Moreover, dropping the middle term in \eqref{maininequality2h} and squaring it gives \eqref{implicationphoto}. Rewriting \eqref{implicationphoto} via the photon surface constraint \eqref{eq:photo} gives
		\begin{align*}
			\frac{2\kappa\HH}{f_{0}}&\geq \frac{(n-1)(n-2)(1-f_{0}^{2})}{\left(s_{\partial M}\right)^{2}}.
		\end{align*}
		Assuming in addition \eqref{AgoMazzCond} and rewriting it via the photon surface constraint \eqref{eq:photo}  gives
		\begin{align*}
			\frac{2\kappa \HH}{f_{0}}+\frac{n-2}{n-1}\HH^{2}&\leq\frac{(n-1)(n-2)}{\left(s_{\partial M}\right)^{2}}.
		\end{align*}
		Taken together, this gives
		\begin{align*}
			\frac{2\kappa\HH}{f_{0}}+\frac{n-2}{n-1}\HH^{2}&\leq\frac{(n-1)(n-2)}{\left(s_{\partial M}\right)^{2}}\leq\frac{2\kappa\HH f_{0}}{1-f_{0}^{2}}
		\end{align*}
		Recalling that $\HH>0$ from the above or by \Cref{prop:signH} implies
		\begin{align*}
			1-f_0^2 & \leq \frac{2(n-1)\kappa f_{0}}{(n-2)\HH}.
		\end{align*}
		On the other hand, the squared left-hand side inequality in \eqref{maininequality2} together with the Smarr formula~\eqref{eq:Smarr} leads to
		\begin{align*}
			1-f_0^2 & \geq \frac{2(n-1)\kappa f_{0}}{(n-2)\HH}.
		\end{align*}
		Thus, equality holds in all the above inequalities and hence in \eqref{maininequality2}, too. {By the equality assertion in \Cref{maintheoremp3}, this proves \Cref{teoclassifica2}} when $f_{0}\in(0,1)$. For $f_{0}\in(1,\infty)$, the argument is the same with reversed signs.
	\end{proof}

\section{Discussion and monotone functions}\label{sec:discussion}
\subsection{Monotone functions along level sets}\label{subsec:monotone}
In \Cref{maintheoremp} and the proof of \Cref{maintheoremp2}, we have seen that the divergence of the vector field 
	\begin{align}\label{def:Z}
	Z\definedas \frac{F(f)}{f}\Vert\nabla f\Vert^{p-3}\,\nabla\Vert\nabla f\Vert^{2}+G(f)\Vert\nabla f\Vert^{p-1}\,\nabla f,
	\end{align}
	$\mathcal{D}=\diver Z$, is $dV$-integrable and non-negative $dV$-almost everywhere for all $n\geq3$, $p\geq p_{n}$, $c,d\in\R$ with $c+d\geq0$, $cf_{0}^{2}+d\geq0$, and $F$ and $G$ as defined in \eqref{eq:F}, \eqref{eq:G}. We exploited this to prove the parametric geometric inequalities in \Cref{maintheoremp2} and, equivalently, the geometric inequalities in \Cref{maintheoremp3}, by applying a suitably adapted divergence theorem to $Z$ on $M$ and evaluating the corresponding surface integrals at $\partial M$ with $f=f_{0}$ on $\partial M$, $f_{0}\in[0,1)\cup(1,\infty)$, and at infinity. Of course, one can also apply the adapted divergence theorem to $Z$ on suitable open domains $N\subset M$ and exploit the non-negativity of $\diver Z$ to obtain estimates between the different components of $\partial N$. In view of the fact that we are using an approach based on a potential, and in order to compare our technique of proof to the monotone function approach by Agostiniani and Mazzieri~\cite{agostiniani}, it will be most interesting to study such $N$ for which $\partial N$ consists of level sets of the lapse function $f$. Our arguments are inspired by \cite[Proposition 4.2]{Anabel}, see also \cite{Agostiniani.2020}.
	
Given a static vacuum system $(M^n,g,f)$, $n\geq3$, with boundary $\partial M$ such that $f=f_{0}$ on $\partial M$ for some $f_{0}\in[0,1)$ or $f_0\in(1,\infty)$, and given $p\geq p_{n}$, $c,d\in\R$ satisfying $c+d\geq0$, $cf_{0}^{2}+d\geq0$, and $F$ and $G$ as defined in \eqref{eq:F}, \eqref{eq:G}, we define the functions $\mathcal{H}_{p}^{c,d}\colon\left([f_{0},1)\cup(1,f_0]\right)\setminus f(\crit)\to\R$ by
	\begin{align}\label{def:curlyH}
		\mathcal{H}_{p}^{c,d}(f)&\definedas \int_{\Sigma_{f}}\langle Z,\nu\rangle\, dS =\pm\int_{\Sigma_{f}}\left[\frac{F(f)}{f}\Vert\nabla f\Vert^{p-4}\langle\nabla\Vert\nabla f\Vert^{2},\nabla f\rangle+G(f)\Vert\nabla f\Vert^{p}\right]dS,
\end{align}
where $\Sigma_{f}$ denotes the $f$-level set of the lapse function $f$ and the sign $\pm$ is $+$ for $f_0\in[0,1)$ and $-$ for $f_0\in(1,\infty)$. $\mathcal{H}_{p}^{c,d}(f)$ is clearly well-defined as we restricted its definition to regular values of $f$ and as we have already asserted that the integral under consideration is well-defined for $f=f_0=0$. {Using the decomposition of $\Delta$ along a level set of $f$ and the static vacuum equation \eqref{vacuum equation2}, one obtains that the mean curvature $\HH$ of any regular level set $\Sigma_f$ is given by
\begin{align}\label{eq:Hlevel}
\HH&=\mp\frac{\nabla^2f(\nabla f,\nabla f)}{\Vert\nabla f\Vert^2},
\end{align}
where the sign $\mp$ is $-$ for $f\in[f_0,1)$ and $+$ if $f\in(1,f_0]$. This} shows that 
	\begin{align}\label{eq:identitycurlyH}
		\mathcal{H}_{p}^{c,d}(f)=\int_{\Sigma_{f}}\Vert\nabla f\Vert^{p-1}\left[-\frac{2F(f)\HH}{f}\pm G(f)\Vert\nabla f\Vert\right]dS
	\end{align}
	holds for all regular values $f\in[f_{0},1)\cup{(1,f_0]}$, understood at $f_0$ as the limit $f\to f_0$ in case $F(f_0)=0$ or $f_0=0$. Recalling from the proof of \Cref{maintheoremp2} that $f(\crit)$ is finite and $f_{0}$ is a regular value of $f$, we will now show that $\mathcal{H}^{c,d}_{p}$ (in both of its representations \eqref{def:curlyH}, \eqref{eq:identitycurlyH}) can be continuously extended to the at most finitely many singular values of~$f$ and is monotone. To see this, let $f_{*}\in(f_{0},1)\cup{(1,f_0]}$ be a critical value of $f$ which then necessarily has an open neighborhood $(f_{*}-2\varepsilon,f_{*}+2\varepsilon)$ for some suitably small $\varepsilon>0$ such that $(f_{*}-2\varepsilon,f_{*}+2\varepsilon)\setminus\{f_{0}\}$ contains only regular points. We set
\begin{equation}
\Psi \definedas \pm\mathcal{H}^{c,d}_{p}\vert_{(f_{*}-2\varepsilon,f_{*}+2\varepsilon)\setminus\{f_{0}\}}
\end{equation}
which is clearly well-defined. Applying the adapted divergence theorem from the proof of \Cref{maintheoremp2} to $Z$ on the domains $(\eta,f_{*}+\varepsilon)$ and $(f_{*}-\varepsilon,\eta)$ for a fixed $\eta\in(f_{*}-\varepsilon,f_{*}+\varepsilon)\setminus\{f_{0}\}$, we learn that 
\begin{equation}\label{eq:comparison}
\Psi (\eta)=\Psi (f_{*}+\varepsilon)-\int_{\{\eta<f<f_{*}+\varepsilon\}}\diver Z\,dV=\Psi (f_{*}-\varepsilon)+\int_{\{\eta>f>f_{*}-\varepsilon\}}\diver Z\,dV.
\end{equation}
As $\diver Z\geq0$ holds $dV$-almost everywhere on $M$ by \Cref{maintheoremp}, $\Psi $ is monotonically increasing on $(f_{*}-\varepsilon,f_{*}+\varepsilon)\setminus\{f_{*}\}$ and we have
\begin{equation}\label{eq:cts}
\Psi (f_{*}+\varepsilon)\geq\Psi (\eta)\geq\Psi (f_{*}-\varepsilon).
\end{equation}
for all $\eta\in(f_{*}-\varepsilon,f_{*}+\varepsilon)\setminus\{f_{0}\}$. This establishes that $\limsup_{\eta\to f_{*}}\Psi (\eta)$ and $\liminf_{\eta\to f_{*}}\Psi (\eta)$ are finite. Moreover, we learn from \eqref{eq:comparison} that
\begin{equation}
\Psi (f_{*}+\varepsilon)-\Psi (f_{*}-\varepsilon)=\int_{\{f_{*}-\varepsilon<f<f_{*}+\varepsilon\}}\diver Z\,dV
\end{equation}
holds for all suitably small $\varepsilon>0$. Thus, recalling from the proof of \Cref{maintheoremp2} that $dV$ {and the $n$-dimensional Hausdorff measure $\mathcal{H}^{n}$} are absolutely continuous with respect to each other with bounded densities and that $\diver Z$ is $dV$-integrable, it follows that $\limsup_{\eta\to f_{*}}\Psi (\eta)=\liminf_{\eta\to f_{*}}\Psi (\eta)$ so that $\Psi $ can continuously extended to $f_{*}$. Extending $\mathcal{H}^{c,d}_{p}$ to $\crit$ continuously in this way, we obtain from \eqref{eq:cts} that $\mathcal{H}^{c,d}_{p}$ is well-defined, continuous, and monotonically increasing on $[f_{0},1)$ for all $f_{0}\in[0,1)$ / monotonically decreasing on $(1,f_0]$ for all $f_0\in(1,\infty)$. Moreover, by \Cref{maintheoremp2} and its proof, we know 
	\begin{align}
		\lim_{{f\to1}}\mathcal{H}_{p}^{c,d}(f)=-\mathcal{F}_{p}^{c,d}(m)
	\end{align}
	with $\mathcal{F}_{p}^{c,d}$ as in \eqref{eq:Fmcd} and $m$ the mass parameter of $(M,g,f)$, recalling that $f$ has no critical points near infinity. Moreover, recall from the end of \Cref{sec:proofs} that $\mathcal{H}_{p}^{c,d}=-\mathcal{F}_{p}^{c,d}(m)$ on $[0,1)$ for the Schwarzschild systems $(M^{n}_{m},g_{m},f_{m})$ of mass $m$. From \Cref{maintheoremp,maintheoremp2}, we also know that (suitable subsets of) the Schwarzschild systems are the only static vacuum systems satisfying this identity.

\begin{theorem}[Monotone functions]\label{thm:monotone}
Let $(M^n,g,f)$, $n\geq3$, be an asymptotically flat static vacuum system of mass $m\in\R$ with connected boundary $\partial M$. Assume that $f\vert_{\partial M} = f_0$ for a constant $f_{0}\in[0,1)\cup(1,\infty)$ and choose the unit normal $\nu$ to $\partial M$ pointing towards the asymptotic end. Let $F$ and $G$ be as in \Cref{maintheoremp} for some $p\geq p_{n}$ and  constants $c,d\in\R$ satisfying $c+d\geq0$ and $cf_{0}^{2}+d\geq0$. Then the function $\mathcal{H}^{c,d}_{p}\colon[f_{0},1)\cup(1,f_{0}]\to\R$ given by \eqref{eq:identitycurlyH} is well-defined, continuous, and monotonically increasing when $f_{0}\in[0,1)$ / monotonically decreasing when $f_{0}\in(1,\infty)$, with $\lim_{f\to1}\mathcal{H}_{p}^{c,d}(f)=-\mathcal{F}_{p}^{c,d}(m)$. Unless $c=d=0$, $\mathcal{H}^{c,d}_{p}\equiv-\mathcal{F}^{c,d}_{p}(m)$ on $[f_{0},1)$ or $(1,f_{0}]$ holds if and only if $(M,g)$ is isometric to a suitable piece of the Schwarz\-schild manifold $(M^{n}_{m},g_{m})$ of mass~$m$ and $f$ corresponds to the corresponding restriction of $f_{m}$ under this isometry. Finally, $m>0$ if $f_0\in[0,1)$ while $m<0$ for $f_0\in(1,\infty)$.
\end{theorem}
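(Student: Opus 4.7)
The plan is to exploit the vector field $Z$ defined in \eqref{def:Z}, whose divergence $\mathcal{D}=\diver Z$ is non-negative $dV$-almost everywhere and $dV$-integrable on $M$ by the generalized Robinson identity \eqref{mainformulap} of \Cref{maintheoremp} together with the integrability argument already established in the proof of \Cref{maintheoremp2}. The three properties to establish are: well-definedness and continuity of $\mathcal{H}^{c,d}_{p}$ on $[f_{0},1)\cup(1,f_{0}]$, monotonicity, and the asymptotic limit as $f\to1$, followed by the rigidity statement and the sign of $m$.

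First I would verify that at any regular value $f_{*}\in[f_{0},1)\cup(1,f_{0}]$ of $f$, the level set $\Sigma_{f_{*}}$ is a smooth compact hypersurface (by properness inherited from asymptotic flatness, together with \Cref{rmk:lapse-asymp}), and that the integrand in \eqref{eq:identitycurlyH} depends continuously on the parameter $f$; hence $\mathcal{H}^{c,d}_{p}$ is continuous on the complement of $f(\crit)$, which is a finite set by real-analyticity of $f$ and the Morse--Sard theorem recalled in the proof of \Cref{maintheoremp2}. To extend $\mathcal{H}^{c,d}_{p}$ across a critical value $f_{*}$, pick an open interval $(f_{*}-2\varepsilon,f_{*}+2\varepsilon)$ containing no other critical value and apply the adapted divergence theorem from the proof of \Cref{maintheoremp2} to $Z$ on the intermediate domains $\{a<f<b\}$ for regular $a,b\in(f_{*}-\varepsilon,f_{*}+\varepsilon)$. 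This yields the identity
\begin{equation*}
\pm\bigl(\mathcal{H}^{c,d}_{p}(b)-\mathcal{H}^{c,d}_{p}(a)\bigr)=\int_{\{a<f<b\}}\diver Z\,dV\geq 0,
\end{equation*}
with the sign $\pm$ accounting for the choice of normal in \eqref{def:curlyH}. Non-negativity of $\diver Z$ gives monotonicity on the regular values; $dV$-integrability of $\diver Z$ and absolute continuity of $dV$ with respect to $n$-dimensional Hausdorff measure force the right-hand side to vanish as $a,b\to f_{*}$ from the appropriate sides, so the one-sided limits of $\mathcal{H}^{c,d}_{p}$ at $f_{*}$ exist and coincide. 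This gives the continuous extension, and monotonicity then extends to all of $[f_{0},1)$ or $(1,f_{0}]$ by continuity.

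The asymptotic value $\lim_{f\to1}\mathcal{H}^{c,d}_{p}(f)=-\mathcal{F}^{c,d}_{p}(m)$ is precisely the ``boundary integral at infinity'' that was already computed in the proof of \Cref{maintheoremp2} using the asymptotic expansions of $F(f)$ and $G(f)$ together with \Cref{lem:asymptotics}; recall also from the end of \Cref{sec:proofs} that the Schwarzschild system realizes $\mathcal{H}^{c,d}_{p}\equiv-\mathcal{F}^{c,d}_{p}(m)$. For the rigidity assertion, if $\mathcal{H}^{c,d}_{p}\equiv-\mathcal{F}^{c,d}_{p}(m)$ on the whole interval, then the displayed divergence identity above forces $\diver Z\equiv0$ on $M$; by \eqref{mainformulap} and the assumption $(c,d)\neq(0,0)$ (which, via \Cref{extremalF}, guarantees $F(f)>0$ in the relevant range), this forces $T\equiv0$ on $M$, and then \Cref{thm:rigidity} delivers the desired isometry to a piece of Schwarz\-schild. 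The sign of $m$ follows from \Cref{Smarr} and \Cref{rmk:lapse-asymp}, as in \Cref{rem:Smarrmass}.

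The main technical subtlety is the continuous extension of $\mathcal{H}^{c,d}_{p}$ across the finitely many critical values of $f$, which relies on the $dV$-integrability of $\diver Z$ near $\crit$; this reduces, however, to the analytic machinery already deployed in the proof of \Cref{maintheoremp2} (cut-off functions $\Theta_{\varepsilon_{k}}$, the refined Kato inequality \eqref{kato}, and the coarea formula), so no essentially new ingredient is needed. Everything else in the proof consists of transcribing consequences of the divergence theorem and \Cref{maintheoremp}, \Cref{thm:rigidity}, and \Cref{Smarr}.
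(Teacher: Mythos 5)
Your proposal is correct and follows essentially the same route as the paper's own argument in \Cref{subsec:monotone}: the divergence theorem applied to $Z$ between level sets combined with non-negativity and $dV$-integrability of $\diver Z$ gives monotonicity and the continuous extension across the finitely many critical values, the limit at $f\to1$ is the boundary integral at infinity from the proof of \Cref{maintheoremp2}, and rigidity follows from $\diver Z\equiv0\Rightarrow T\equiv0$ via $F>0$ and \Cref{thm:rigidity}. No substantive differences.
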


\begin{remark}[Geometric monotonicity of $\mathcal{H}^{c,d}_{p}$]\label{rem:monotonegeometric}
Before we move on, we would like to draw the readers' attention to the fact that no matter whether $f_{0}\in[0,1)$ or $f_{0}\in(1,\infty)$, the function $\mathcal{H}^{c,d}_{p}$ is monotonically increasing along the level sets of $f$ from $\partial M$ towards the asymptotic end. This is because for $f_{0}\in(1,\infty)$, $f$ decreases along its level sets from $\partial M$ towards the asymptotic end.
\end{remark}
	
\subsection{Comparison with the proof by Agostiniani and Mazzieri}\label{subsec:AM}
Let us now relate the functions $\mathcal{H}_{p}^{c,d}$ from \eqref{def:curlyH} to the functions $U_{p}$ and their derivatives introduced in~\cite{agostiniani}. In our notation, these functions are given by
	\begin{align}\label{def:Up}
		U_{p}(f)&\definedas\left(\frac{2m}{1-f^{2}}\right)^{\frac{(n-1)(p-1)}{n-2}}\int_{\Sigma_{f}}\Vert\nabla f\Vert^{p}\,dS,\\\label{def:Up'}
		U_p'(f)&=-(p-1)\left(\frac{2m}{1-f^{2}}\right)^{\frac{(n-1)(p-1)}{n-2}}\int_{\Sigma_{f}}\Vert\nabla f\Vert^{p-1}\left[\HH-\frac{2(n-1)f\Vert\nabla f\Vert}{(n-2)(1-f^2)}\right]dS
	\end{align}
for $f\in[f_0,1)\setminus\crit$ for $f_0\in[0,1)$ and $p\geq1$. From this {and \eqref{eq:identitycurlyH}}, one readily computes
	\begin{align}\label{def:UpH}
		U_p(f)&=\frac{\mu_p}{4(1-f_0^2)}\left[\frac{f^2-f_0^2}{1-f^2}\,\mathcal{H}_{p}^{-1,1}(f)-\mathcal{H}_{p}^{1,-f_0^2}(f)\right],\\\label{def:Up'H}
		U_p'(f)&=\frac{\mu_p f}{2(1-f^2)^2}\,\mathcal{H}_{p}^{-1,1}(f)
	\end{align}
on $[f_0,1)\setminus\crit$ for $\mu_p\definedas(p-1)(2m)^{\frac{(n-1)(p-1)}{n-2}}>0$ when $p\geq p_n$ and $f_0\in[0,1)$. 

It is shown in \cite[Theorem 1.1]{agostiniani} that $U_p$ is differentiable on $[f_0,1)$ for $p\geq3$ with derivative $U_p'$ and continuous on $[f_0,1)$ for $p\geq1$. Moreover, it is shown in \cite[Theorem 1.2]{agostiniani} that $U_p$ is differentiable with derivative $U_p'$ on $[f_0,1)\setminus f(\crit)$ for $p\geq p_n$. A fortiori, it follows from \Cref{thm:monotone} that the functions $U_{p}$ and $U_p'$ given by \eqref{def:UpH}, \eqref{def:Up'H} are well-defined as continuous functions on $[f_0,1)$ not only for $p\geq3$ but in fact for $p\geq p_{n}$. As there are only finitely many critical values of $f$ and as $U_{p}$, $U_{p}'$ are continuous also at critical values of $f$, the fundamental theorem of calculus implies that $U_{p}$ is continuously differentiable with derivative $U_{p}'$ on $[f_0,1)$ for $f_0\in[0,1)$ for all $p\geq p_n$. Moreover, as $\mathcal{H}^{-1,1}_p$ is monotonically increasing with limit $-\mathcal{F}^{-1,1}_p(m)=0$ as $f\to1$ by \Cref{thm:monotone}, we have $U_p'\leq0$ so that $U_p$ is montonically decreasing on $[f_0,1)$ for all $p\geq p_n$. Moreover, if $U_p'(f)=0$ for some $f\in[f_0,1)$, $f\neq0$, then $\mathcal{H}^{-1,1}_p(f)=0$ and hence by \Cref{thm:monotone} $(M,g,f)$ is isometric to a suitable piece of the Schwarz\-schild manifold $(M^{n}_{m},g_{m})$ of mass~$m$ and $f$ corresponds to the corresponding restriction of $f_{m}$ under this isometry. Finally, if $f_0=0$, $U_p'(0)=0$ is automatic from \eqref{def:Up'H} and one finds
\begin{align}
\begin{split}\label{eq:Up''}
U_p''(0)&\definedas \lim_{f\to 0+}\frac{U_p'(f)}{f}=\frac{\mu_p}{2}\lim_{f\to0+}\mathcal{H}^{-1,1}_p(f)=\frac{\mu_p}{2}\,\mathcal{H}^{-1,1}_p(0)\\
&=-\frac{(p-1)}{2}(2m)^\frac{(n-1)(p-1)}{n-2}\int_{\partial M}\Vert\nabla f\Vert^{p-2}\left[\scal_{\partial M}-\frac{4(n-1)\Vert\nabla f\Vert^2}{n-2}\right]dS,
\end{split}
\end{align}
where we have used continuity of $\mathcal{H}^{-1,1}_p$, the expression for the mean curvature in terms of the Hessian of the harmonic function $f$, the static vacuum equation \eqref{vacuum equation1}, the Gau{\ss} equation, and \Cref{rem:geodesic}. Also, it follows that $U_p''(0)\leq0$ because we have already established above that $\mathcal{H}^{-1,1}_p\leq0$. Finally, $U_p''(0)=0$ if and only if $\mathcal{H}^{-1,1}_{p}(0)=0$ if and only if $(M,g,f)$ is isometric to a suitable piece of the Schwarz\-schild manifold $(M^{n}_{m},g_{m})$ of mass~$m$ and $f$ corresponds to the corresponding restriction of $f_{m}$ under this isometry by \Cref{thm:monotone}. This can be re-expressed as follows.

\begin{corollary}[Monotonicity-Rigidity \`a la Agostiniani--Mazzieri]\label{coro:AM}
Items (ii) and (iii) of the Monotonicity-Rigidity Theorem \cite[Theorem 1.1]{agostiniani} hold for $p\geq p_n$.
\end{corollary}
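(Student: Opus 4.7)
The plan is to read off the assertions of items (ii) and (iii) of \cite[Theorem 1.1]{agostiniani} directly from the discussion preceding the corollary, which has already performed all the essential analytic work via \Cref{thm:monotone}. The key observation is that the algebraic identities \eqref{def:UpH} and \eqref{def:Up'H} express $U_p$ and $U_p'$ as linear combinations of the monotone functions $\mathcal{H}^{c,d}_{p}$ with explicit $f$-dependent coefficients that are smooth on $[f_0,1)$; hence the regularity, continuity, and limiting properties of $U_p$ and $U_p'$ across critical values of $f$ are inherited from those of $\mathcal{H}^{-1,1}_p$ and $\mathcal{H}^{1,-f_0^2}_p$ established in \Cref{thm:monotone}.

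First, I would invoke \Cref{thm:monotone} with $(c,d)=(-1,1)$ and $(c,d)=(1,-f_0^2)$ (both satisfying the admissibility constraints $c+d\geq 0$ and $cf_0^2+d\geq 0$) to conclude that both $\mathcal{H}^{-1,1}_p$ and $\mathcal{H}^{1,-f_0^2}_p$ are well-defined and continuous on $[f_0,1)$ for every $p\geq p_n$, and that $\mathcal{H}^{-1,1}_p$ is monotonically increasing with $\lim_{f\to 1}\mathcal{H}^{-1,1}_p(f)=-\mathcal{F}^{-1,1}_p(m)=0$. The latter limit forces $\mathcal{H}^{-1,1}_p\leq 0$ on $[f_0,1)$, which via \eqref{def:Up'H} translates into $U_p'\leq 0$, i.e.\ the monotonicity claim of item (ii). Since the set of critical values $f(\crit)$ is finite and $U_p'$ is a continuous extension across these finitely many exceptional points of a genuine derivative on the complement, the fundamental theorem of calculus upgrades $U_p$ to a $C^1$ function on $[f_0,1)$ with derivative $U_p'$, recovering the differentiability claim of item (ii) in the range $p\geq p_n$ (rather than only $p\geq 3$).

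For the rigidity statement, suppose $U_p'(f_\star)=0$ for some $f_\star\in(f_0,1)$. Then \eqref{def:Up'H} and $\mu_p>0$, $f_\star\neq 0$, force $\mathcal{H}^{-1,1}_p(f_\star)=0$. Monotonicity and the vanishing at infinity of $\mathcal{H}^{-1,1}_p$ then yield $\mathcal{H}^{-1,1}_p\equiv 0$ on $[f_\star,1)$, and the rigidity part of \Cref{thm:monotone} (with $(c,d)=(-1,1)\neq(0,0)$) gives isometry to the corresponding piece of Schwarzschild.

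Finally, for $f_0=0$, the factor $f$ in \eqref{def:Up'H} makes $U_p'(0)=0$ automatic, and the relevant second-order quantity is $U_p''(0)\definedas\lim_{f\to 0^+}U_p'(f)/f$. Computing this limit is straightforward using continuity of $\mathcal{H}^{-1,1}_p$ at $0$: I would substitute \eqref{def:Up'H}, use the static vacuum equation \eqref{vacuum equation1}, the expression for the mean curvature from \eqref{eq:Hlevel}, the Gau{\ss} equation, and \Cref{rem:geodesic} (since $\partial M$ is a static horizon when $f_0=0$) to reduce $\mathcal{H}^{-1,1}_p(0)$ to the boundary integral displayed in \eqref{eq:Up''}. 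The sign $U_p''(0)\leq 0$ then again follows from $\mathcal{H}^{-1,1}_p\leq 0$, and the rigidity case $U_p''(0)=0$ reduces, exactly as above, to $\mathcal{H}^{-1,1}_p(0)=0$ and hence via \Cref{thm:monotone} to isometry with Schwarzschild. The main (and only) conceptual obstacle has already been handled: namely, establishing well-definedness, continuity and monotonicity of $\mathcal{H}^{c,d}_p$ across critical values of $f$ for $p=p_n$, which rests on the delicate integrability argument in the proof of \Cref{maintheoremp2}. Everything else in the present corollary is extraction of the consequences.
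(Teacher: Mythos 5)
Your proposal is correct and follows essentially the same route as the paper: both read off items (ii) and (iii) by expressing $U_p$ and $U_p'$ through \eqref{def:UpH}, \eqref{def:Up'H} in terms of the extremal monotone functions $\mathcal{H}^{-1,1}_p$ and $\mathcal{H}^{1,-f_0^2}_p$, invoke \Cref{thm:monotone} for continuity, monotonicity, the limit $-\mathcal{F}^{-1,1}_p(m)=0$, and rigidity, and then upgrade to $C^1$ via the fundamental theorem of calculus across the finitely many critical values, with the $f_0=0$ case handled exactly as in \eqref{eq:Up''}. The only (cosmetic) difference is that you make explicit the intermediate step $\mathcal{H}^{-1,1}_p\equiv 0$ on $[f_\star,1)$ before appealing to the rigidity part of \Cref{thm:monotone}, which the paper leaves implicit.
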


Let us now discuss the case $f_0\in(1,\infty)$. To do so, let us extend \eqref{def:Up} as the definition of $U_p$ also to $(1,f_0]\setminus\crit$ for any $f_0\in(1,\infty)$ and any $p\geq p_n$, noting that the pre-factor of the integral is well-defined as $m<0$ and $f>1$ in this case. One directly computes that \eqref{def:UpH} gets replaced by 
\begin{align}\label{def:UpH1}
U_p(f)&=\frac{\mu_p}{4(f_0^2-1)}\left[\frac{f_{0}^2-f^2}{f^2-1}\,\mathcal{H}_{p}^{1,-1}(f)-\mathcal{H}_{p}^{-1,f_0^2}(f)\right]
\end{align}
with $\mu_p\definedas (p-1)(2\vert m\vert)^{\frac{(n-1)(p-1)}{n-2}}>0$. Next, using our above results, we can show continuous differentiability of $U_p$  for $f_0\in(1,\infty)$ and $p\geq p_n$ and compute its derivative $U_p'$ as follows: Recall from \Cref{sec:proofs} that
\begin{equation*}
\diver\left(\Vert\nabla f\Vert^{p-1} \nabla f\right)=(p-1)\Vert\nabla f\Vert^{p-3}\,\nabla^2f(\nabla f,\nabla f)
\end{equation*}
on $M\setminus\crit${. Exploiting \eqref{eq:Hlevel} and arguing as  in \Cref{sec:proofs,sec:inequalities}, in particular using the coarea formula and the adapted divergence theorem, we get
\begin{equation*}
(p-1)\int_{f_1}^{f_2}\int_{\Sigma_\tau}\Vert\nabla f\Vert^{p-1} \HH\, dS\,d\tau = \int_{\Sigma_{f_2}}\Vert\nabla f\Vert^p\,dS-\int_{\Sigma_{f_1}}\Vert\nabla f\Vert^p\,dS
\end{equation*}
for any $f_0\geq f_2>f_1>1$}. Taking the limit $f_1\to1$, this reduces to
\begin{equation*}
(p-1)\int_{1}^{f}\int_{\Sigma_\tau}\Vert\nabla f\Vert^{p-1} \HH\, dS\,d\tau = \int_{\Sigma_{f}}\Vert\nabla f\Vert^p\,dS=\left(\frac{1-f^2}{2m}\right)^{\frac{(n-1)(p-1)}{(n-2)}} U_p(f)
\end{equation*}
for every $f\in(1,f_0]$ by continuity of $U_p$. This can be re-expressed as 
\begin{equation*}
U_p(f)=(p-1)\left(\frac{2m}{1-f^2}\right)^{\frac{(n-1)(p-1)}{(n-2)}}\int_{1}^{f}\int_{\Sigma_\tau}\Vert\nabla f\Vert^{p-1} \HH\, dS\,d\tau
\end{equation*}
for all $f\in(1,f_0]$. Now, arguing as in \Cref{subsec:monotone}, the function $f\mapsto \int_{\Sigma_f}\Vert\nabla f\Vert^{p-1} \HH\, dS$ is continuous on $(1,f_0]$ which gives continuous differentiability of $U_p$ and 
\begin{equation}\label{eq:Up'neg}
U_p'(f)=(p-1)\left(\frac{2m}{1-f^{2}}\right)^{\frac{(n-1)(p-1)}{n-2}}\int_{\Sigma_{f}}\Vert\nabla f\Vert^{p-1}\left[\HH+\frac{2(n-1)f\Vert\nabla f\Vert}{(n-2)(1-f^2)}\right]dS
\end{equation}
for all $f\in(1,f_0]$ by the fundamental theorem of calculus. In particular, $U_p$ is continuously differentiable on $(1,f_0]$ for all $f_0\in(1,\infty)$ and all $p\geq p_n$ and \eqref{def:Up'H} gets replaced by
\begin{align}\label{def:Up'H1}
		U_p'(f)&=-\frac{\mu_p f}{2(f^2-1)^2}\,\mathcal{H}_{p}^{1,-1}(f)
	\end{align}
 as can be seen by a direct computation. Moreover, by \eqref{def:Up'H1} and $\mathcal{H}^{-1,1}_{p}(f)\to-\mathcal{F}^{1,-1}_{p}(m)=$ as $f\to1$ by \eqref{eq:Fmcd} and in view of \Cref{rem:monotonegeometric}, we have $U_{p}'\geq0$ on $(1,f_{0}]$. Again, just as in \Cref{rem:monotonegeometric}, this means that $U_{p}$ is monotonically decreasing from $\partial M$ to infinity. In analogy with \Cref{coro:AM}, we can summarize our findings as follows.

\begin{corollary}[Monotonicity-Rigidity \`a la Agostiniani--Mazzieri for $f_{0}\in(1,\infty)$]
Item (ii) of the Mono\-to\-ni\-city-Rigidity Theorem \cite[Theorem 1.1]{agostiniani} holds for $U_{p}$ given by \eqref{def:Up} on $(1,f_{0}]$ with derivative $U_{p}'$ given by \eqref{eq:Up'neg} for all $f_{0}\in(1,\infty)$ and all $p\geq p_n$, with the opposite inequality $U_{p}'\geq0$ on $(1,f_{0}]$.
\end{corollary}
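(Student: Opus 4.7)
The plan is to mirror the Monotonicity--Rigidity argument just carried out in the positive mass case $f_0 \in [0,1)$, adapting the signs and integration domain to the negative-mass regime $f_0 \in (1,\infty)$. I need three ingredients: continuous differentiability of $U_p$ on $(1,f_0]$, an expression of $U_p'$ in terms of the monotone function $\mathcal{H}_p^{1,-1}$, and a sign/limit statement for $\mathcal{H}_p^{1,-1}$ to conclude $U_p'\geq 0$.

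For the first two ingredients, I would integrate the identity $\diver(\Vert\nabla f\Vert^{p-1}\nabla f)=(p-1)\Vert\nabla f\Vert^{p-3}\nabla^2 f(\nabla f,\nabla f)$ (established in \Cref{sec:proofs}) over a shell $\{f_1 < f < f_2\}$ with $1<f_1<f_2\leq f_0$, using the adapted divergence theorem from the proof of \Cref{maintheoremp2} to handle the critical set. Combining with the negative-case version of \eqref{eq:Hlevel} and the coarea formula yields
\begin{equation*}
(p-1)\int_{f_1}^{f_2}\int_{\Sigma_\tau}\Vert\nabla f\Vert^{p-1}\HH\,dS\,d\tau = \int_{\Sigma_{f_2}}\Vert\nabla f\Vert^p\,dS - \int_{\Sigma_{f_1}}\Vert\nabla f\Vert^p\,dS.
\end{equation*}
Sending $f_1\to 1^+$, the right-hand surface term behaves like a surface integral at infinity; by \Cref{lem:asymptotics} we have $\Vert\nabla f\Vert=O(|x|^{-(n-1)})$, so the integrand is $O(|x|^{-(p-1)(n-1)})$ which has vanishing surface integral for $p>1$. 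This yields the closed-form representation
\begin{equation*}
U_p(f)=(p-1)\left(\tfrac{2m}{1-f^2}\right)^{\tfrac{(n-1)(p-1)}{n-2}}\int_1^f\int_{\Sigma_\tau}\Vert\nabla f\Vert^{p-1}\HH\,dS\,d\tau
\end{equation*}
on $(1,f_0]$. Continuity of $\tau\mapsto\int_{\Sigma_\tau}\Vert\nabla f\Vert^{p-1}\HH\,dS$ (argued exactly as in \Cref{subsec:monotone}, by passing the adapted divergence theorem across the at most finitely many critical values of $f$), combined with the fundamental theorem of calculus, gives continuous differentiability of $U_p$ on $(1,f_0]$ and formula \eqref{eq:Up'neg}; a direct algebraic manipulation then produces \eqref{def:Up'H1}.

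For the third ingredient, I would invoke \Cref{thm:monotone} with $c=1$, $d=-1$. These are admissible since $c+d=0\geq 0$ and $cf_0^2+d=f_0^2-1>0$ for $f_0\in(1,\infty)$. The theorem then asserts that $\mathcal{H}_p^{1,-1}$ is monotonically decreasing on $(1,f_0]$ with $\lim_{f\to 1^+}\mathcal{H}_p^{1,-1}(f)=-\mathcal{F}_p^{1,-1}(m)=0$, the last equality because the factor $c+d$ appearing in \eqref{eq:Fmcd} vanishes. Hence $\mathcal{H}_p^{1,-1}\leq 0$ throughout $(1,f_0]$. Combined with the strictly negative prefactor $-\tfrac{\mu_p f}{2(f^2-1)^2}$ in \eqref{def:Up'H1} (using $\mu_p>0$ and $f>1>0$), this yields $U_p'\geq 0$ on $(1,f_0]$ as claimed.

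The step I expect to require the most care is the limit $f_1\to 1^+$ in the divergence argument, since this is where our weaker decay assumptions (cf.\ \Cref{rem:decay}) must pass inspection. The estimate $\Vert\nabla f\Vert^p\,dS=O(|x|^{-(p-1)(n-1)})$ from \Cref{lem:asymptotics} is comfortably strong enough precisely because $p\geq p_n>1$, so the surface integral at infinity genuinely vanishes; this is the only nontrivial analytic input, and the rest of the argument is essentially a bookkeeping exercise tracking signs and admissibility of $(c,d)$.
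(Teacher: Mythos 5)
Your proposal follows essentially the same route as the paper: integrating $\diver(\Vert\nabla f\Vert^{p-1}\nabla f)$ over level-set shells via the adapted divergence theorem and coarea formula, sending the inner level to $f\to1^{+}$ (where the paper appeals to boundedness of $U_p$ and the vanishing prefactor while you estimate the surface term directly from \Cref{lem:asymptotics} — both work), obtaining \eqref{eq:Up'neg} by the fundamental theorem of calculus, and then deducing $U_p'\geq0$ from \eqref{def:Up'H1} together with the monotonicity of $\mathcal{H}^{1,-1}_p$ and its zero limit at $f=1$ from \Cref{thm:monotone}. The argument is correct and matches the paper's proof in all essentials.
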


Last but not least, we would like to point out that we have computed $U_{p}$ and $U_{p}'$ only from $\mathcal{H}_{p}^{c,d}$ using only the extremal values of $(c,d)$ (normalized to $\vert c\vert=1$) as in the proofs of \Cref{teoclassifica1,teoclassifica2}, see also \Cref{fig,fig2}. All other functions $\mathcal{H}^{c,d}_{p}$ are related to the extremal ones by
\begin{align}
\mathcal{H}^{c,d}_{p}&=\frac{cf_{0}^{2}+d}{1-f_{0}^{2}}\mathcal{H}^{-1,1}_{p}+\frac{c+d}{1-f_{0}^{2}}\mathcal{H}^{1,-f_{0}^{2}}_{p},\\
\mathcal{H}^{c,d}_{p}&=\frac{cf_{0}^{2}+d}{f_{0}^{2}-1}\mathcal{H}^{1,-1}_{p}+\frac{c+d}{f_{0}^{2}-1}\mathcal{H}^{-1,f_{0}^{2}}_{p}
\end{align}
for $f_{0}\in[0,1)$ and $f_{0}\in(1,\infty)$, respectively. These relations allow us to express all functions $\mathcal{H}^{c,d}_p$ by $U_p$ and $U_p'$, obtaining
	\begin{align}\label{eq:final relation}
		\mu_p\mathcal{H}_{p}^{c,d}(f)&=\frac{2(cf^2+d)(1-f^{2})}{f}U_{p}'(f)-4(c+d)U_p(f)
	\end{align}
	for all $f\in[f_0,1)\cup(1,\infty)$, in consistency with \eqref{def:UpH}, \eqref{def:Up'H} and \eqref{def:UpH1}, \eqref{def:Up'H1}, respectively. As a consequence, for $f_{0}=0$, one has
	\begin{align}\label{eq:final relation''}
		\mu_p\mathcal{H}_{p}^{c,d}(0)&=2dU_{p}''(0)-4(c+d)U_{p}(0),
	\end{align}
	in consistency with \eqref{eq:Up''}.
	
To summarize, our comparison of the monotone functions $\mathcal{H}^{c,d}_{p}$ and $U_{p}$ shows that our divergence theorem approach leads to (an extension of) the results of the monotone function approach by Agostiniani and Mazzieri~\cite{agostiniani}, in some sense lifting the monotonicity from a derivative to the function itself. This circumvents the conformal change to an asymptotically cylindrical picture as introduced in \cite{agostiniani}. In particular, working directly with the divergence theorem in the static system makes the analysis of the equality case simpler, avoiding the need to appeal to a splitting theorem.
	
\begin{corollary}[Relation between governing functionals]
Let $(M^{n},g,f)$, $n\geq3$, be an asymptotically flat static vacuum system of mass $m$ with connected boundary $\partial M$. Let $p\geq p_{n}$ and $c,d\in\R$, and suppose that $f\vert_{\partial M}=f_{0}$ for some $f_{0}\in[0,1)\cup(1,\infty)$. Then the functions $\mathcal{H}^{c,d}_{p}$ given by \eqref{def:curlyH} and $U_{p}$ given by \eqref{def:Up} are related by \eqref{eq:final relation} as well as by \eqref{def:UpH}, \eqref{def:Up'H} when $f_{0}\in[0,1)$ and by \eqref{def:UpH1}, \eqref{def:Up'H1} when $f_{0}\in(1,\infty)$. Here, $\mu_{p}=(p-1)(2\vert m\vert)^{\frac{(n-1)(p-1)}{n-2}}$. Moreover, if $f_{0}=0$, they satisfy the relation \eqref{eq:final relation''}.
\end{corollary}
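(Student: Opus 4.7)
The plan is to verify all four stated identities by direct computation, exploiting the fundamental observation that $F$ and $G$ as defined in \eqref{eq:F}, \eqref{eq:G}, and hence the vector field $Z$ from \eqref{def:Z} and the functional $\mathcal{H}_p^{c,d}$, depend linearly on $(c,d)$. This reduces the task of establishing \eqref{eq:final relation} to verifying it on any linearly independent pair of choices of $(c,d)$, together with the already-established base identities. Note also that both sides of \eqref{eq:final relation} are linear in $(c,d)$, so it suffices to match the coefficients of $c$ and $d$.

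First I would treat the case $f_0 \in [0,1)$. The base identities \eqref{def:UpH} and \eqref{def:Up'H} follow directly by substituting the pairs $(c,d) = (-1,1)$ and $(c,d) = (1,-f_0^2)$ into \eqref{eq:identitycurlyH}, expanding $F(f)$ and $G(f)$ explicitly, and comparing with \eqref{def:Up}, \eqref{def:Up'}; this is a computation already contained essentially verbatim in the paragraph preceding the corollary. Inverting \eqref{def:Up'H} yields
\begin{align*}
\mathcal{H}_p^{-1,1}(f) = \frac{2(1-f^2)^2}{\mu_p f}\,U_p'(f),
\end{align*}
and then \eqref{def:UpH} gives $\mathcal{H}_p^{1,-f_0^2}(f)$ in terms of $U_p(f)$ and $U_p'(f)$. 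Using the decomposition
\begin{align*}
(c,d) = \tfrac{cf_0^2 + d}{1-f_0^2}\,(-1,1) + \tfrac{c+d}{1-f_0^2}\,(1,-f_0^2),
\end{align*}
which is valid by linear independence whenever $f_0 \in [0,1)$, substitution and the elementary algebraic identity $(cf_0^2+d)(1-f^2) + (c+d)(f^2-f_0^2) = (cf^2+d)(1-f_0^2)$ then yield \eqref{eq:final relation}.

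The case $f_0 \in (1,\infty)$ is handled analogously, using the basis $(c,d) = (1,-1)$ and $(c,d) = (-1,f_0^2)$. The base identities \eqref{def:UpH1} and \eqref{def:Up'H1} are verified by the same matching procedure once continuous differentiability of $U_p$ on $(1,f_0]$ has been ensured; this is precisely the content of the computation carried out between \eqref{def:UpH1} and \eqref{def:Up'H1} in the preceding discussion, relying on the identity $\diver(\Vert\nabla f\Vert^{p-1}\nabla f) = (p-1)\Vert\nabla f\Vert^{p-3}\nabla^2 f(\nabla f,\nabla f)$ away from $\crit$, the coarea formula, and the adapted divergence theorem from the proof of \Cref{maintheoremp2}. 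The algebraic simplification mirrors the positive-mass case and produces the same expression \eqref{eq:final relation}, with the factor $(1-f_0^2)$ appearing once in the numerator and once in the denominator so that no sign ambiguity arises.

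Finally, the identity \eqref{eq:final relation''} at $f_0 = 0$ follows from \eqref{eq:final relation} by dividing by $f$ and taking the limit $f \to 0^+$: the first term on the right-hand side is $\frac{2(cf^2+d)(1-f^2)}{f^2}\cdot\frac{U_p'(f)}{f}\cdot f$, which tends to $2d\,U_p''(0)$ by the limit defining $U_p''(0)$ in \eqref{eq:Up''}, while the left-hand side is continuous at $0$ by the continuity of $\mathcal{H}_p^{c,d}$ established in \Cref{thm:monotone}. The main obstacle throughout is not analytic but organizational: tracking the sign conventions and the different ranges of $f$ carefully so that the identities \eqref{def:UpH}--\eqref{def:Up'H1} and their combinations match up unambiguously in both mass regimes; once the base-case identities are in place, every subsequent step is elementary algebra.
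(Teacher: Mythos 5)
Your proposal is correct and follows essentially the same route as the paper: the base identities \eqref{def:UpH}, \eqref{def:Up'H} (resp.\ \eqref{def:UpH1}, \eqref{def:Up'H1}) are read off from \eqref{eq:identitycurlyH} and \eqref{def:Up}, \eqref{def:Up'} for the extremal parameter pairs, \eqref{eq:final relation} then follows by linearity of $\mathcal{H}^{c,d}_p$ in $(c,d)$ via the same decomposition the paper uses, and \eqref{eq:final relation''} is obtained from the limit $f\to0^+$ using \eqref{eq:Up''} and the continuity from \Cref{thm:monotone}. The only blemish is a spurious factor of $f$ in your intermediate rewriting of the first term in that limit (it should read $2(cf^2+d)(1-f^2)\cdot\frac{U_p'(f)}{f}$), but the stated limit $2d\,U_p''(0)$ is the correct one.
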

	
	Consequently, our approach gives new proofs for all geometric inequalities for black hole horizons described in \cite[Section 2.2]{agostiniani} and for the Willmore-type inequalities for black hole horizons and other level sets of the lapse function $f$ described in \cite[Section 2.3]{agostiniani}. It also extends their results to $p\geq p_{n}$, to weaker asymptotic assumptions, and to $f_{0}\in(1,\infty)$. In particular, we would like to point out that the right-hand side inequality in \eqref{ineqA} is the Riemannian Penrose inequality. We hence in particular reprove the Riemannian Penrose inequality for asymptotically flat static vacuum systems with connected black hole inner boundary (but with the extra assumption~\eqref{CondR} necessary to conclude rigidity) under very weak asymptotic assumptions.	
	
	Last but not least, we would like to mention that it should be possible to extend the methods used in \cite{agostiniani} from $p\geq3$ to $p\geq p_{n}$ for $f_{0}\in[0,1)$ like it is successfully done by Agostiniani and Mazzieri in a different context in \cite{Agostiniani.2020}. Furthermore, it may be possible to modify the methods used in \cite{agostiniani} to handle the case $f_{0}\in(1,\infty)$, performing a different, but likely similar conformal transformation. Finally, it is conceivable that the methods used in \cite{agostiniani} may be extended to weaker asymptotic assumptions.

\subsection{Comparison with the proof by Nozawa, Shiromizu, Izumi, and Yamada}\label{subsec:Nozawa}
In \cite[Section 5]{Nozawa}, Nozawa, Shiromizu, Izumi, and Yamada devise a strategy to proving a black hole uniqueness theorem which turns out to coincide with \Cref{teoclassifica1}. To do so, they devise a Robinson style strategy of proof including a parameter $c$ arising as a power which should be understood as $c\asdefined p-1$, see below. In this section, we will first identify that their strategy of proof almost coincides with our strategy of proving \Cref{teoclassifica1}. After that, we will discuss which new insights our proof gives and which hurdles we overcame to make this strategy a rigorous proof. We will restrict to $f_{0}=0$ as \cite{Nozawa} only addresses black holes.

The strategy followed by Nozawa, Shiromizu, Izumi, and Yamada is to introduce a divergence identity just like \eqref{mainformulap} based on a vector field $J$. They then show that the divergence of $J$ is non-negative and can be related to the pointwise tensor norm of certain expressions related to a $(0,2)$-tensor $H$, in analogy with the proof of \Cref{maintheoremp}, see below. They then discuss why the vanishing of the $H$-tensor should imply isometry to a suitable Schwarzschild system, see below; their approach is somewhat reminiscent of parts of our proof of \Cref{thm:rigidity} but bears some issues, see below. To obtain the black hole uniqueness result \Cref{teoclassifica1}, they then suggest to apply the divergence theorem to $J$ on the static manifold $M$ and use the properties of the static black hole horizon and the asymptotic decay assumptions ($g$ and $f$ are assumed to be asymptotic to $g_m$ and $f_m$ for some $m\in\R$, including at least two derivatives) and then conclude as we do.

First, let us note that the vector field $Z$ from \eqref{def:Z} used in our approach in fact coincides with the vector field $J$ that was used in \cite{Nozawa} up to a constant factor --- despite its seemingly different definition. To see this, recall that $m>0$ and $0<f<1$ on $M$ in the black hole case. Then, adjusting to our notation, in particular choosing their exponent $c\asdefined p-1$, their vector field $J$ can be computed to be the $\frac{(n-2)(p-1)}{2}$-multiple of
\begin{equation}\label{def:J}
\overline{J}\definedas \frac{F_J(f)}{f}\|\nabla f\|^{p-1}\,\nabla\|\nabla f\|^{2}+G_J(f)\|\nabla f\|^{p-1}\,\nabla f,
\end{equation}
on $M$, where 
\begin{align*}
F_{J}(t) &\definedas\frac{a+b(1-t^2)}{(1-t^2)^{\frac{(n-1)(p-1)}{(n-2)}-1}},\\
G_{J}(t) &\definedas\frac{\frac{4(n-1)}{n-2}(a+b(1-t^{2})) - \frac{4}{p-1}a}{(1-t^2)^{\frac{(n-1)(p-1)}{(n-2)}}}
\end{align*}
for parameters $a,b,p\in\mathbb{R}$ with $p\geq p_{n}$ and variables $0<t<1$. Choosing our parameters $c\definedas-b$, $d\definedas a+b$, we find that $F_{J}=F$ and $G_{J}=G$ with $F$ and $G$ from \eqref{eq:F}, \eqref{eq:G} and hence $\overline{J}=Z$. Moreover, the conditions for positivity of $F_J$ identified in \cite{Nozawa}, $a\geq0$ and $a+b\geq0$, exactly coincide with our (black hole case) conditions $c+d\geq0$ and $d\geq0$. 

This of course informs us that the divergences of $\overline{J}$ and $Z$ must also coincide. To relate the divergence identity \cite[(5.12)]{Nozawa} to \eqref{mainformulap}, we spell out \cite[(5.12)]{Nozawa} in our notation, obtaining
\begin{align}\label{eq:divIdJ}
\diver \overline{J} &=\|\nabla f\|^{p-1}\,\frac{F_J(f)}{f}\left[\,\left\|S\right\|^2+ \frac{2((n-1)(p-1)-(n-2))}{n-1}\|\overline{H}\|^2\right]
\end{align}
for parameters $a,b\in\mathbb{R}$, where $S$ is given by
\begin{align}
\begin{split}
S(X,Y,Z)&\definedas \frac{1}{\|\nabla f\|^2}(X(f) H(Y,Z) - Y(f) H(X,Z))\\
&\quad\quad -  \frac{1}{n-1}(g(\overline{H},X) g(Y,Z)-g(\overline{H},Y) g(X,Z))
\end{split}
\end{align}
away from $\crit$ for $X,Y,Z\in\Gamma(TM)$ in terms of the $H$-tensor
\begin{align}
H &\definedas \nabla^2f - \frac{2}{n-2}\frac{f\|\nabla f\|^2}{1-f^2}g + \frac{2n}{n-2}\frac{f}{1-f^2}(df\otimes df)
\end{align}
and the vector field $\overline{H}$ given by
\begin{align}\label{def:Hbar}
\overline{H} &\definedas \frac{\nabla\|\nabla f\|^{-1}}{\|\nabla f\|^{-1}} - \frac{2(n-1)}{(n-2)}\frac{f\,\nabla f}{(1-f^{2})}=-\frac{H(\nabla f,\cdot)^{\#}}{\|\nabla f\|^{2}}
\end{align}
away from $\crit$. Knowing already that $F_J=F$, let us now relate $\overline{H}$ to \eqref{mainformulap3} and compare the tensor $S$ with the $T$-tensor, obtaining 
\begin{align}
\overline{H}&=-\frac{1}{2\|\nabla f\|^2} \left(\nabla\|\nabla f\|^2+\frac{4(n-1)}{(n-2)}\frac{f\|\nabla f\|^2\,\nabla f}{1-f^2}\right),\\\label{eq:ST}
S&=-\frac{(n-2)f}{(n-1)\|\nabla f\|^2}\, T
\end{align}
away from $\crit$. To find \eqref{eq:ST}, we have used \eqref{eq:CTW} and the corresponding identity \cite[(5.16)]{Nozawa}. This confirms that, away from $\crit$, the two divergence identities are in fact identical  and only expressed differently, with \cite{Nozawa} building upon the $H$-tensor and our approach working directly with the $T$-tensor. However, we would like to point out that $T$ and \eqref{mainformulap3} are well-defined on $\crit$ while $S$ and $\overline{H}$ are not.

Before we suggest geometric interpretations of the two different viewpoints of the identical divergence identity expressed as \eqref{eq:divIdJ} and \eqref{mainformulap}, respectively, let us quickly delve into the strategies of asserting rigidity. Our proof of \Cref{thm:rigidity} heavily relies on the local analysis of solutions of $T=0$ in \Cref{sec:Ttensor} in combination with our asymptotic assumptions. It does \emph{not} exploit \eqref{mainformulap3} which is not available when $p=p_n$, see also \Cref{rem:simplify}. In contrast, Nozawa, Shiromizu, Izumi, and Yamada~\cite{Nozawa} first use $\overline{H}=0$ to obtain the functional relationship \cite[(5.11)]{Nozawa} which is equivalent to \eqref{mainformulap3} away from $\crit$. Using this functional relationship and $H=0$, they then suggest to proceed very similarly in spirit as we do in the proof of \Cref{thm:locally} with strong simplifications as most cases we study are excluded by the functional relationship \cite[(5.11)]{Nozawa}, see again \Cref{rem:simplify}. However, as we pointed out before, $\overline{H}=0$ is not readily explicitly deducible when $p=p_n$, a case also included in \cite{Nozawa}. Also, it is not discussed explicitly in \cite{Nozawa} how $H=0$ follows from $S=0$ and $\overline{H}=0$. It is thus worthwhile to investigate the relationship between $S$, $H$, and $\overline{H}$ more closely. To see that the $H$-tensor vanishes when $S=0$ and $\overline{H}=0$, we compute
\begin{align*}
0&=S(X,\nabla f,Y)=\frac{1}{\|\nabla f\|^2}\left(X(f)\underbrace{H(\nabla f,Y)}_{-\|\nabla f\|^2\langle\overline{H},Y\rangle=0}-\|\nabla f\|^2 H(X,Y)\right)=-H(X,Y)
\end{align*}
for all vector fields $X,Y\in\Gamma(TM)$ to conclude that $H=0$ when $S=0$, away from $\crit$. We would like to point out that this argument is very similar to the one given in the proof of \Cref{lemmaT}. In fact, it turns out that  
\begin{align*}
H&=\frac{f}{\|\nabla f\|^2}\left(\|\nabla f \|^2 \Ric + \frac{\lambda \|\nabla f\|^2}{n-1} g-\frac{n\lambda}{n-1} df\otimes df\right)
\end{align*}
follows from $T=0$ via \eqref{mainformulap3}, with $\lambda$ denoting the eigenvalue for the eigenvector $\nabla f$ of $\Ric$ with the right hand side coming from \eqref{gEinstein} as can be seen by computing $\lambda$ via \eqref{mainformulap3}. This is another way of seeing that $S=0$ implies $H=0$ when assuming $\overline{H}=0$ (or equivalently \eqref{mainformulap3}).  This is a purely local result. However, it does \emph{not} locally follow from $S=0$ that $H=0$ \emph{without} assuming $\overline{H}=0$ --- not even for $n=3$ --- which can be seen as follows. From \Cref{thm:locally}, we know that $S=T=0$ implies that each regular point $p\in M$ has an open neighborhood $p\in V\subseteq M$ such that $(V,g\vert_{V},f\vert_{V})$ has Type 1, 2, 3, or 4. In particular, \Cref{coro:locally} gives a full characterization of $\lambda\vert_{V}$. Thus, supposing that $\overline{H}=0$ and thus $\lambda=-\frac{2(n-1)\vert\nabla f\vert^{2}}{(n-2)(1-f^{2})}$ on $M$, excludes Types 1-3 and enforces Type 4 with $a=1$. Note that this can be restated as saying that the vanishing of $\overline{H}$ implies that a system $(V,g\vert_{V},f\vert_{V})$ as in \Cref{thm:locally} has to be a suitable piece of a quasi-Schwarzschild manifold, see \Cref{rem:quasi,rem:simplify}. Note furthermore that even the systems of Type 4 with $a=1$ are not spherically symmetric unless the Einstein manifold $(\Sigma,\sigma)$ is the standard round sphere, see \Cref{rem:quasi}. In other words, any system of Types 1-3 or of Type 4 with $a\neq1$ is a counter-example to concluding vanishing of $\overline{H}$ and thus of $H$ from vanishing of $S$. 

As discussed above, this insight becomes relevant when choosing the threshold parameter value $p-1=c=1-\frac{1}{n-1}=p_{n}-1$, included in the analysis in \cite{Nozawa}, when the divergence \eqref{eq:divIdJ} vanishes as one can then \emph{not} conclude that $\overline{H}=0$. The threshold case thus needs some more careful treatment as we have given it in \Cref{sec:Ttensor,sec:rigidity}. Note that for $n=3$, the rigidity for the treshold value $c=p-1=\frac{1}{2}$ was already handled in \cite{robinsonetal}.

Let us now turn to some geometric considerations regarding the various tensor and vector fields discussed in this section. First, from \eqref{eq:ST} and the second part of \eqref{def:Hbar}, we learn that $T$ is fully determined by $H$, as was already implicitly observed and exploited in \cite{Nozawa}. However, as argued above, in order to recover (vanishing of) $H$ from (vanishing of) $T$, we need the functional relationship \eqref{mainformulap3} which takes the form $\overline{H}=0$ in \cite{Nozawa}. The geometric interpretation of $T$ and thus of $S$ is given by the definition \eqref{tensorT} of $T$. The geometric interpretation of $H$ can be understood in multiple ways: First (see also \cite[(5.9)]{Nozawa}), $H$ can be derived from knowing that, in spherical symmetry with radial direction $\nabla f$, one can express
\begin{equation}\label{eq:ansatz}
\nabla^2f=\alpha g+\beta df\otimes df
\end{equation}
for suitable $\alpha, \beta\in C^\infty(M)$ because $\nabla^2f$ will vanish in tangential-normal directions along level sets of $f$. Next, assuming \eqref{mainformulap3} or $\overline{H}=0$ and plugging $\nabla f$ into \eqref{eq:ansatz}, we obtain  
\begin{equation}
\alpha+\beta=\frac{2(n-1)f}{(n-2)(1-f^2)}.
\end{equation}
On the other hand, using the static equation \eqref{vacuum equation2}, we find 
\begin{equation}
n\alpha+\|\nabla f\|^2\beta=0.
\end{equation}
Taken together, this gives
\begin{equation}
\nabla^2f=\frac{2f\|\nabla f\|^2}{(n-2)(1-f^2)} g+\beta df\otimes df-\frac{2nf}{(n-2)(1-f^2)}df\otimes df
\end{equation}
which explains the definition of $H$ as measuring the deviation from spherical symmetry, subject to \eqref{mainformulap3} or $\overline{H}=0$. The second interpretation of $H$ (see also \cite[(2.50)]{Nozawa}) is exploiting the fact that $rf_m(r)\partial_r$ is a conformal Killing vector field in the Schwarzschild system (or in other words in a spherically symmetric system satisfying \eqref{mainformulap3} or $\overline{H}=0$. Thus,
\begin{align}\label{def:zeta}
\zeta \definedas \dfrac{\nabla f}{(1-f^2)^{n/(n-2)}}
\end{align}
is a conformal Killing vector field in the Schwarzschild case, with
\begin{align}
\mathcal{L}_\zeta g-\frac{2}{n}(\diver\zeta) = \dfrac{2}{(1-f^2)^{\frac{n}{n-2}}}H
\end{align}
as can be seen from a straightforward computation. Hence, $H=0$ if and only if $\zeta$ as defined in \eqref{def:zeta} is a conformal Killing vector field in $(M,g)$. Yet another geometric interpretation of $H$ (see \cite[page 22]{Nozawa}) is that
\begin{equation}
\overline{\Ric}=\frac{1-f}{f(1+f)}H,
\end{equation}
where $\overline{\Ric}$ denotes the Ricci curvature tensor of the conformally transformed metric 
\begin{equation}
\overline{g}=\left(\frac{1+ f}{2}\right)^{\frac{4}{n-2}}g
\end{equation}
appearing in the Bunting--Masood-ul-Alam proof \cite{bunting} and its higher dimensional versions \cite{gibbons,Hwang1}. In summary, no matter which strategy of proof we follow (i.e., via $T$ as in this paper or via $H$ as in \cite{Nozawa}), we are heavily exploiting conformal flatness -- with the conformal factor expressed as a function of $f$ -- of Schwarzschild as well as its spherical symmetry made explicit in the functional relationship \eqref{mainformulap} or in the vanishing of~$\overline{H}$.

{Let us close by highlighting} that despite the similarities in the proofs of static vacuum black hole uniqueness subject to the scalar curvature bound \eqref{CondR} presented here and in \cite{Nozawa}, this paper adds the necessary and subtle analytic details needed to handle critical points of $f$, notably when asserting integrability of the divergence $\mathcal{D}$ and applicability of the divergence theorem in this weak regularity scenario, see \Cref{sec:inequalities}. This particularly applies to the case $p\in[p_n,3)$ when $\mathcal{D}$ is not continuous across $\crit$. We also demonstrate that much lower decay assumptions (namely asymptotic flatness with decay rate $\tau=0$) suffice to conclude. And of course, we add the equipotential photon surface uniqueness results of \Cref{teoclassifica2} with $f_0\in(0,1)\cup(1,\infty)$.

\bibliographystyle{amsalpha}
\bibliography{Robinson}
\end{document}